\DeclareMathOperator{\supp}{{\bf supp}}
\DeclareMathOperator{\coker}{{\rm coker}}
\numberwithin{equation}{section}
\numberwithin{equation}{subsection}
\theoremstyle{plain}
\newtheorem{theorem}[equation]{Theorem}
\newtheorem{lemma}[equation]{Lemma}
\newtheorem{proposition}[equation]{Proposition}
\newtheorem{corollary}[equation]{Corollary}
\theoremstyle{definition}
\newtheorem{example}[equation]{Example}
\newtheorem{remark}[equation]{Remark}
\newtheorem{definition}[equation]{Definition}
\newtheorem{goal}[equation]{Goal}
\newtheorem{bekezdes}[equation]{}
\numberwithin{equation}{section}
\numberwithin{equation}{subsection}
\title{Generalized Monodromy Conjecture in dimension two}
\author{Andr\'as N\'emethi}
\address{A. R\'enyi Institute of Mathematics, 1053 Budapest,  Re\'altanoda u. 13-15,  Hungary.}
\email{nemethi@renyi.hu}
\thanks{The first author is partially supported by OTKA Grants.
The second author is partially supported by FWO--Flanders project G.0318.06}
\author{Willem Veys}
\address{K.U.Leuven, Dept. Wiskunde, Celestijnenlaan 200B, 3001 Leuven, Belgium}
\email{wim.veys@wis.kuleuven.be
}
\keywords{Monodromy conjecture, topological zeta function,  monodromy,
surface singularity, plane curve singularities, resolution graphs,
semigroup condition, splice diagrams, splice decomposition}
\subjclass[2000]{Primary. 14B05, 32S40, 14H20;
Secondary. 32S05, 14H50, 14J17, 32S25}
\date{}
\begin{document}

\maketitle

%\begin{center}
% {\em Dedicated to }
%\end{center}

\pagestyle{myheadings} \markboth{{\normalsize A. N\'emethi and W.
Veys}}{ {\normalsize Generalized Monodromy Conjecture}}

\renewcommand{\div}{{\rm div}}

\newcommand{\et}{\mathcal{T}}
\newcommand{\bS}{{\mathbb S}}
\newcommand{\bma}{\mbox{\boldmath$a$}}
\newcommand{\bmb}{\mbox{\boldmath$b$}}
\newcommand{\bmc}{\mbox{\boldmath$c$}}
\newcommand{\bme}{\mbox{\boldmath$e$}}
\newcommand{\bmi}{\mbox{\boldmath$i$}}
\newcommand{\bmj}{\mbox{\boldmath$j$}}
\newcommand{\bmv}{\mbox{\boldmath$v$}}
\newcommand{\bmk}{\mbox{\boldmath$k$}}
\newcommand{\bmm}{\mbox{\boldmath$m$}}
\newcommand{\bms}{\mbox{\boldmath$s$}}
\newcommand{\bSW}{\mbox{\boldmath$SW$}}
\newcommand{\bmf}{\mbox{\boldmath$f$}}
\newcommand{\bmg}{\mbox{\boldmath$g$}}
\newcommand{\gq}{{\mathfrak q}}
\newcommand{\xo}{o}
\newcommand{\veeK}{{\tiny\vee}}
\newcommand{\gh}{g}

\newcommand{\lp}{{l}}
\newcommand{\ev}{\varepsilon}
\newcommand{\tx}{\tilde{X}}
\newcommand{\tz}{\tilde{Z}}
\newcommand{\calL}{{\mathcal L}}
\newcommand{\calm}{{\mathcal M}}
\newcommand{\calx}{{\mathcal X}}
\newcommand{\calo}{{\mathcal O}}
\newcommand{\calt}{{\mathcal T}}
\newcommand{\cali}{{\mathcal I}}
\newcommand{\calj}{{\mathcal J}}
\newcommand{\calC}{{\mathcal C}}
\newcommand{\calS}{{\mathcal S}}
\newcommand{\calQ}{{\mathcal Q}}
\newcommand{\calF}{{\mathcal F}}

\newcommand{\cs}{\langle \chi_0\rangle}

\newcommand{\cc}{\bar{C}}
\newcommand{\vp}{\varphi}

\def\mmod{\mbox{mod}}
\let\d\partial
\def\EE{\mathcal E}
\newcommand{\cC}{\EuScript{C}}
\def\C{\mathbb C}
\def\Q{\mathbb Q}
\def\R{\mathbb R}
\def\bS{\mathbb S}
\def\bH{\mathbb H}
\def\bB{\mathbb B}\def\bC{\mathbb C}\def\bA{\mathbb A}
\def\Z{\mathbb Z}
\def\N{\mathbb N}
\def\bn{\mathbb N}
\def\bp{\mathbb P}\def\bt{\mathbb T}
\def\eop{$\hfill\square$}
\def\bif{(\, , \,)}
\def\coker{\mbox{coker}}
\def\im{{\rm Im}}

\newcommand{\Gammma}{{G}}
\newcommand{\no}{\noindent}
\newcommand{\bfc}{{\mathbb C}}
\newcommand{\bfq}{{\mathbb Q}}
\newcommand{\calE}{{\mathcal E}}
\newcommand{\calW}{{\mathcal W}}
\newcommand{\calV}{{\mathcal V}}
\newcommand{\calP}{{\mathcal P}}
\newcommand{\calI}{{\mathcal I}}\newcommand{\calJ}{{\mathcal J}}
\newcommand{\calA}{{\mathcal A}}\newcommand{\CalA}{{\calA_F\cup\calA_W}}
\newcommand{\calAA}{{\mathcal A}'}
\newcommand{\calB}{{\mathcal B}}
\newcommand{\calR}{{\mathcal R}}
\newcommand{\calG}{{\mathcal G}}\newcommand{\calN}{{\mathcal N}}
\newcommand{\bc}{{\mathbb C}}
\newcommand{\bez}{B_{\epsilon_0}}
\newcommand{\br}{{\mathbb R}}
\newcommand{\bq}{{\mathbb Q}}
\newcommand{\sez}{S_{\epsilon_0}}
\newcommand{\ep}{\epsilon}
\newcommand{\vs}{\vspace{3mm}}
\newcommand{\si}{\sigma}
\newcommand{\Gammas}{S}
\newcommand{\Gx}{G_\pi(X)}
\newcommand{\Gxf}{G_\pi(X,f)}
\newcommand{\GxF}{G_\pi(X,F)}
\newcommand{\Gax}{\Gamma_\pi(X)}
\newcommand{\Gaxf}{\Gamma_\pi(X,f)}
\newcommand{\GaxF}{\Gamma_\pi(X,F)}
\newcommand{\GaxFW}{\Gamma_\pi(X,F,W)}

\newcommand{\q}{w}

\newcommand{\labelpar}{\label}

\begin{abstract}
The aim of the article is an extension  of the Monodromy Conjecture of Denef and Loeser in dimension two, incorporating zeta functions with differential forms and targeting {\em all}  monodromy eigenvalues, and also considering singular ambient spaces. That is, we treat in a conceptual unity  the poles of the (generalized) topological zeta function
and the monodromy eigenvalues associated with an analytic germ $f:(X,0)\to (\C,0)$ defined on a normal surface
singularity $(X,0)$.  The article targets the `right' extension
in the case when the link of $(X,0)$ is a homology sphere.
As a first step, we prove a splice decomposition formula for the topological zeta function $Z(f,\omega;s)$
for any $f$ and analytic differential form $\omega$, which will play the key technical localization tool in the later
definitions and proofs.

Then, we define a set of `allowed' differential forms via a local restriction
along each splice component. For plane curves we show the following three guiding  properties:
(1) if $s_0$ is any pole of $Z(f,\omega;s)$ with $\omega$ allowed, then $\exp(2\pi is_0)$ is a monodromy
eigenvalue of $f$, (2)  the `standard' form is allowed, (3) every monodromy eigenvalue of $f$ is obtained as in (1)
for some allowed $\omega$ and some $s_0$.

For general $(X,0)$ we prove (1) unconditionally, and (2)--(3) under an additional (necessary) assumption, which generalizes the semigroup condition of Neumann--Wahl. Several examples illustrate the definitions and support the
basic assumptions.
\end{abstract}

%\tableofcontents{}

{\small

\section{Introduction}

\subsection{}
{\bf The Monodromy Conjecture} of Igusa, Denef and Loeser \cite{DL1,DL2}
is one of the most fertilizing
conjectures in singularity theory. It relates poles of
Igusa/motivic/topological zeta functions to monodromy eigenvalues.
For instance,   for a local analytic {\em isolated}  singularity
$f:(\C^n,0)\to (\C,0)$ it predicts that if $s_0$ is a pole of the
local topological zeta function of $f$, then $\exp(2\pi is_0)$ is
an eigenvalue of the local monodromy operator acting on
$H^*(F_0,\C)$, where $F_0$ is the Milnor fiber of $f$.
In the definition of the topological zeta function not only
some invariants of the local germ $f$ are codified, but in a
subtle way also some numerical data of the standard differential form of
 $(\C^n,0)$ lifted to an embedded resolution of $f$.

 The conjecture was proved for $n=2$ by Loeser (originally in the
context of $p$-adic Igusa zeta functions) in \cite{Lo2}. There are
by now various other partial results, e.g.
\cite{ACLM1,ACLM2,BMT,LP,LV,Lo3,Ve1,Ve6}, nevertheless the conjecture resists
to all attacks (even for $n=3$).
%\marginpar{OTHER CASES???}
The main obstacle is the lack of a conceptual bridge connecting the two
invariants, the topological zeta function and the monodromy operator;
the existent proofs of the particular cases basically compute both sides independently
(using their special properties) and compare the two final data.

A possible way to find a more conceptual understanding and tools is to
extend the conjecture to a larger class. This leads us
to the replacement of $(\C^n,0)$ with a singular space, and of the standard
differential form with some generalization of it.
Although both types of generalizations are obstructed (see the next subsections),
{\em the main target  of the  present article is to find the right such extension
when the ambient space is 2--dimensional}.
Since the two types of generalizations are independent, and have rather different effects,
in order to understand their nature,
at the first discussion we separate them.

\subsection{} {\bf Extending the differential form.} There is a more direct
motivation for the generalization of the standard form.
It is easy to see on explicit examples that for any fixed germ $f:(\C^n,0)\to (\C,0)$,
and by considering the `classical' topological zeta function,   not all
the eigenvalues of the monodromy operator are realized; actually
quite few eigenvalues are obtained this way (in general). Hence, for any fixed $f$, it
is natural to try to extend in some way this set of poles, such
that the same procedure would yield {\em all} eigenvalues of $f$.
We expect that such a construction could reveal the conceptual bridge mentioned above.
A natural way
to extend poles is using the local topological zeta functions
associated with the original germ $f$ and with {\em a set of analytic
differential $n$-forms $\omega$ living in $(\C^n,0)$}.

We now describe these zeta functions; they are defined in
terms of an embedded resolution $\pi:\tilde{X}\to\C^n$ of $f^{-1}
( 0 ) \cup \mbox{div}(\omega)$. We denote by $E_i,\ i \in S$, the
irreducible components (exceptional divisors and strict transforms)
of the inverse image $\pi^{-1} (f^{-1}(0)
 \cup \mbox{div}(\omega))$ and by $N_i$ and $\nu_i - 1$ the
multiplicities of $E_i$ in the divisor of $\pi^\ast f$ and
$\pi^\ast \omega$, respectively. We put $E^\circ_I := (\cap_{i \in
I} E_i) \setminus (\cup_{j \notin I} E_j)$ for $I \subset S$.
%in
%particular $E^\circ_\emptyset = X \setminus (\cup_{j \in S} E_j)$.
Hence,  the $E^\circ_I$ constitute  a stratification of $\tilde{X}$ in locally
closed subsets.

\begin{definition}\label{I:1}  The  {\sl (local) topological zeta function} of
$(f,\omega)$ at $0 \in \C^n$ is
$$Z (f, \omega; s) := \sum_{I \subset S} \chi (E^\circ_I \cap \pi^{-1} ( 0 ))
\prod_{i \in I} \frac{1}{\nu_i + sN_i},$$  where $s$ is a
variable.
\end{definition}

This definition extends naturally the definition of Denef and Loeser from  \cite{DL1} valid
for the standard form $\omega = dx_1 \wedge
\cdots \wedge dx_n$. Their original proof that the corresponding  expression
does not depend on the chosen resolution is by describing it as a
kind of limit of $p$-adic Igusa zeta functions. Later they
obtained the statement as a specialization of the intrinsically
defined motivic zeta functions \cite{DL2}. Another technique is
applying the Weak Factorization Theorem \cite{AKMW,Wl} to compare
two different resolutions. For arbitrary $\omega$ one can proceed
analogously. Hence $Z (f, \omega; s)$ is a well--defined invariant of the
pair $(f,\omega)$.

In the literature similar generalizations are already present,
see for example \cite{ACLM1,ACLM2,Ve4}; however they are subject to the
restriction  $\supp(\mbox{div}(\omega)) \subset f^{-1}(0)$.  In the present
article  we release this condition.  (In the original context of $p$-adic Igusa zeta functions,
see e.g. \cite[III 3.5]{Lo2}.)

Although $Z (f, \omega; s)$ is a sum of `local' contributions, in this sum many local
candidate poles cancel, and usually it is hard to characterize those which survive.

\bekezdes{}
We recall that the \lq classical\rq\ monodromy conjecture predicts
the implication
$$s_0\ \text{is a pole of}\ Z(f,\omega;s) \Rightarrow \exp(2\pi is_0)\ \text{is
a monodromy eigenvalue of}\ f,
$$
where $\omega$ is the standard form $dx_1 \wedge \cdots
\wedge dx_n$. The point is that for arbitrary analytic
differential forms $\omega$ this  implication is in general
false. Even more, in \cite{VeNew} the second author showed that every given
monodromy eigenvalue of $f$ is induced by a pole $s_0$ of some
$Z(f,\omega;s)$, but in general that zeta function has other \lq
bad\rq\ poles, not inducing eigenvalues.
This shows, that one can indeed generate a lot of poles, but their
relationship with the eigenvalues is uncontrolled. The next program
targets exactly this uncertainty via the selection of forms with compatibility properties with
the monodromy operator.

Partly initiated in \cite{VeNew} (see also \cite{NV}) we propose the following
program.

\begin{goal}\label{goal:in} \
Define/identify a collection of {\it allowed} analytic forms $\omega$ (depending on $f$) such that

\begin{enumerate}
\item   if $s_0$ is any pole of $Z(f,\omega;s)$, where $\omega$ is allowed, then $\exp(2\pi is_0)$ is
a monodromy eigenvalue of $f$,
\item   the standard form $\omega=dx_1 \wedge \cdots
\wedge dx_n$ is allowed,
\item   every monodromy eigenvalue of $f$ is obtained as in (1)
for some allowed $\omega$ and some $s_0$.
\end{enumerate}
\end{goal}

A few remarks are in order. First, note that (1) and (2)
imply the classical Monodromy Conjecture. Furthermore, (1) and (3) combined
show that the set of eigenvalues of $f$ coincides with the set $\exp(2\pi iP)$,
where $P$ runs over all the poles of the zeta functions of $f$ and all allowed forms.

Note also that the `size' of the wished allowed forms
 is obstructed by both conditions (1) and (3). A
larger set is obstructed more by (1), while if this set  is too small
then it may not realize in (3) all eigenvalues. In particular, its construction
really requires a conceptual understanding of the geometry of the
pole--eigenvalue bridge mentioned above.

\smallskip
Let us briefly support  our goal by comparing with a more classical
context. Recall that the topological zeta function is a kind of
avatar of the $p$-adic Igusa zeta function, which is the
meromorphic continuation of a $p$-adic integral associated to a
$p$-adic function germ $f$ (with complex parameter/variable
$s$). We could rephrase the above also for that zeta function, again
with complex parameter/variable $s$. For
the analogous {\em complex integral associated to a complex function
germ $f$}, and involving {\em compactly supported
$C^\infty$ forms $\omega$}, there are general \emph{theorems} by
Malgrange \cite{Ma2}, Kashiwara \cite{Ka} and Barlet \cite{Ba2},
claiming the analogous statements of the goals above. Roughly, if
$s_0$ is any pole of the zeta function of $f$ and {\it any}
compactly supported $C^\infty$ form $\omega$, then $\exp(2\pi is_0)$ is a monodromy
eigenvalue of $f$, and all eigenvalues are obtained this way. Our
\lq standard form\rq\ can be compared with a $C^\infty$ form that
is non-vanishing.
 For a detailed explanation and comparison, we refer to the introduction of
\cite{VeNew}. However, the `exact comparison' in general fails, and it is still
hidden what the analogue of compactly supported   $C^\infty$ forms is in the
holomorphic category.

\smallskip One of the  main results of this
paper is an identification of a
%possible
set of
allowed forms realizing the goals (1)--(2)--(3)
above for $n=2$, that is for  an \emph{arbitrary} plane curve germ $f$.

Our technique is to consider the so--called splice diagram of $f$,
and its splice decomposition in star--shaped pieces. It is not
difficult (and reasonably conceptual) to define the allowed forms and
realize our goal when the
diagram of $f$ itself is star--shaped; we then use this as
inspiration for the general case, identifying allowedness \lq
locally\rq, that is, on all star--shaped subdiagrams, cf.   section \ref{s:3}.

\subsection{} {\bf Extension to a singular ambient space.}
An important new feature in this paper is generalizing
(\ref{goal:in}) to a singular setting. More precisely, we will consider an analytic
function germ $f$ defined on a normal surface germ $(X,0)$
and study the analogue of the goals (1)--(2)--(3)  for this $f$.
First we must identify the relevant (generalized) topological zeta function
for such a pair $(X,f)$ and for an analytic form $\omega$.
In fact, in all our combinatorial arguments, we will replace $(f,\omega)$ by two
Weil divisors (for details and motivation see subsection (\ref{bek:INTR1})).

Of course, in order to have a well--defined analogue  of part (2) of Goal \ref{goal:in}, we need to
consider Gorenstein germs, which guarantees the existence of a `standard form'.
If the ambient space is two-dimensional,
 the Gorenstein condition
simplifies at topological/combinatorial level
to the numerically Gorenstein condition, which is {\it automatically  guaranteed},
for example, if the link is an integral homology sphere.

Nevertheless, we will need some further combinatorial restrictions.
There is an example of Rodrigues \cite{Ro} indicating  that the `naive'
extension of the Monodromy Conjecture to
the Gorenstein singular setting might be obstructed.
This example produces a set of integers $\{\nu_i\}_i$ and  $\{N_i\}_i$  associated with
the exceptional divisors (or, with vertices of the plumbing graph)
which topologically are not obstructed to be the multiplicities of the
standard form and of an analytic germ $f$, and they produce a counterexample
to the Monodromy Conjecture.
In (\ref{ex:unimod}) we even construct another such example
involving a Gorenstein surface singularity {\em with unimodular dual graph}.
One of the following two possibilities can solve this situation in order to have a chance
for a positive continuation:
either we impose some  additional  topological restrictions which eliminate any such
counterexample, or we try to show that the analytic realization of the analytic germ
(in the presence of the Gorenstein structure) guarantees these additional needed
topological restrictions. The second possibility
looks very difficult and is hopeless with the present tools of the theory, and it is not the goal
of the present article to attack it. Therefore, in order to have an extended version,
we stay with the first possibility.

The additional restriction we impose, in fact,  is very natural;
it  is a modification  of the {\it semigroup condition}
of Neumann and Wahl \cite{NW1}, adapted to the present situation and
to `divisors supported on a graph'.
This condition is automatically satisfied if the ambient space is smooth (and in several other cases too).
This also  emphasizes a subtle connection between
the semigroup condition and the Monodromy Conjecture.

Our second main result extends the combinatorial definition of the allowed forms
to the singular surface case when the link of the ambient space is
an integral homology sphere, and establishes
for them  Goal  (1) unconditionally, and (2) and (3) under the semigroup condition.

Allowed forms are again defined via the same local picture of the splice components.
We emphasize that the definition of the
allowed forms, the generalized semigroup condition, and the whole proof of Goal \ref{goal:in} is
combinatorial: one uses only the Weil divisors of the functions and forms
(and their analytic realizations will be not involved).
In subsection (\ref{bek:INTR1}) we give some details about the formalism of
zeta functions associated with Weil divisors,  and in (\ref{bek:INTR2}) we motivate the definition
of allowed forms. An ambient germ with integral homology sphere link will be
 abbreviated by IHS germ.

\subsection{}
Here is the {\bf plan of the paper} and the list of the most important {\bf new results}.

In the next section we recall the classical notion of splice diagrams for surface germs and for functions/divisors on them, and we incorporate in the picture also differential forms (and their generalizing divisors). Here we also introduce the extension of the semigroup condition of Neumann and Wahl in the presence of a divisor. Section 3 treats the concept of splicing of these diagrams. The relevant \lq splice formulae\rq\ are well known for functions on surface germs; we develop them for differential forms. Then we use these formulae to derive a splice formula for the topological zeta function, see Theorem \ref{prop:splicezeta}. In section 4 we define our allowed forms/divisors and investigate their crucial properties concerning restriction and extension along (sub)diagrams. In section 5 we show that any pole of the zeta function associated to a function $f$ and any allowed form/divisor induces a monodromy eigenvalue of $f$ (first goal). The second goal (standard form is allowed) and third goal (any eigenvalue is induced by a pole of a zeta function of an allowed form) are proved for plane curve germs
 (unconditionally) in section 6. Finally their generalized versions for functions on IHS germs
 (under the semigroup condition) are proven in section 7.

Additionally, we list several examples in order to make the manuscript more readable, and in order to
emphasize the role of several key points in definitions or about needed restrictions.
For example, Examples \ref{ex:EXAMPLE4} and  \ref{ex:EXAMPLE7} show that
the semigroup condition is necessary to have Goal (2) and (3), respectively, (at least in any topological
treatment), while
the discussion from  \ref{ex:unimod} shows that if we drop the IHS assumption about
the link of the ambient space we need to treat  a much stronger (and presumably more technical)
notion replacing the semigroup condition.

\subsection{} {\bf Some more details and motivations.} Here we present the key motivations for the
major restrictions and constructions of the article as a separated guide.

\bekezdes{}\labelpar{bek:INTR1}
 We first explain what the {\bf topological zeta function is on a
{\em  singular} ambient surface} associated with two Weil divisors.

There have been various generalizations of topological and motivic
zeta functions to singular ambient varieties $X$ (instead of
$\C^n$), see for example \cite{Ve4}, and specifically for surfaces
\cite{Ro,RoVe,Ve3}. Before introducing the ones we will use,
note that the zeta function $Z(f, \omega; s)$ of
Definition \ref{I:1} depends in fact only on the effective
divisors $F:=\div(f)$ and $W:=\div(\omega)$ on $(\C^n,0)$, and not
on the actual function $f$ and form $\omega$. In terms of these
divisors, the numerical data $N_i$ and $\nu_i-1$ above are given
as the multiplicities of $E_i$ in the divisors $\pi^*F$ and $K_\pi
+ \pi^*W$, respectively, where $K_\pi$ is the relative canonical
divisor of $\pi$.

Therefore, it is natural to associate in a singular setting a
topological zeta function to two {\it Weil  divisors} on $X$, in
terms of an embedded resolution $\pi$ of the union of their
supports. For this we should be in a situation where there is a
natural notion of pullback of Weil divisors, and where the
relative canonical divisor $K_\pi$ exists.
Both conditions are satisfied when $(X,0)$ is an arbitrary normal
surface germ (for the pullback see (\ref{2.2.2}), while for
$K_\pi$ see (\ref{eq:adj})). Let $F:=\sum_{j\in J} N_j E_j$ be an effective
non-zero Weil divisor, and $W:=\sum_{j\in J} (\nu_j-1) E_j$ an
arbitrary Weil divisor on $X$, where $E_j,\ j\in J,$ are (finitely
many) irreducible Weil divisors. We only require that $(N_j,\nu_j)
\neq (0,0)$ for $j\in J$, that is, a component $E_j$ that appears
in $W$ with multiplicity $-1$ must appear in the support of $F$.

Let $\pi:\tilde{X}\to X$ be an embedded resolution of $\supp(F)
\cup \supp(W)$. We denote again by $E_i,\ i \in S,$ the
irreducible components of its inverse image, and by $N_i$ and
$\nu_i-1$ the multiplicities of $E_i$ in the divisors $\pi^*F$ and
$K_\pi + \pi^*W$, respectively. Note that the $N_i$ and $\nu_i$ of
exceptional components $E_i$ are in general rational numbers.
  The  {\sl (local) topological zeta function} of
$(F,W)$ at $0 \in X$, denoted as $Z (F, W; s)$, is defined by the
same formula as in Definition \ref{I:1}. It is straightforward to
verify that this expression does not depend on the chosen
resolution $\pi$.

In our study below we will assume that the link of $X$ is an integral homology sphere; then in particular all $N_i$ and $\nu_i$ are integers.
In such a context,
more useful for us will be a formula for $Z (F, W; s)$ in terms of
the {\em splice diagram associated to $(F,W)$}, see
(\ref{ss:zetafunctions}). Roughly, the splice diagram  is obtained from
the dual minimal embedded resolution graph of  $\supp(F) \cup \supp(W)$ by collapsing the
strings to edges and modifying the decorations  by a system of data which describes more
trustworthily the needed linking numbers.

This description allows us to determine a  splice formula showing the `almost additivity'
of   $Z (F, W; s)$ with respect to the splice decomposition of the diagrams. This is
another novelty of the article, which becomes a  crucial tool in the main proofs.

\bekezdes{}\labelpar{bek:INTR3} ({\bf Restrictions regarding $W$.})
It is an easy fact that in the resolution graph
exceptional components of valency 1 or 2
do \emph{not} contribute to the actual poles of the zeta function,
and that those of valency at least 3 (corresponding to the nodes
in the splice diagram) in general do contribute to the poles.
Since precisely those last components contribute to the monodromy
eigenvalues, it is very reasonable to restrict from the start the
support of the desired allowed $W$ as follows. The map $\pi$
should be also an embedded resolution of $f^{-1}(0) \cup
\supp(W)$, and more precisely $\supp(W)$ should consist only of
components coinciding with components of $f$ and components whose
strict transform intersects the exceptional locus in a component
of valency 1; moreover such a component of valency 1 must
intersect at most one component of $W$. In this way, one does not create new
exceptional components of valency at least 3 and does not transform
those of valency 1 or 2 into components of valency 3, what would
probably create undesired new poles. This restriction has a similar formulation in
terms of splice diagrams too.

\bekezdes{}\labelpar{bek:INTR2}
 Next, we give the {\bf idea of the definition of
allowed forms/divisors} associated with
a fixed function $f$ (or Weil divisor $F$), with some motivation.
It will be illustrated via  the plane curve germ
given by the function $f =(y^{d_1}-x^{d_2})(y^{d_1}+x^{d_2})$,
where $d_1>1$, $d_2>1$ and $\gcd(d_1,d_2)=1$. Below is its
star-shaped splice diagram with node $E$, where the dashed
arrows indicate the support of the strict transform of
$W=\div(\omega)$, and the decorations along these arrows are the multiplicities of
its components (for details, see section \ref{s:1}).
\smallskip

\begin{picture}(200,55)(265,0)
\put(440,20){\circle*{4}}\put(410,35){\circle*{4}}\put(410,5){\circle*{4}}
\put(440,19){\vector(2,1){30}}\put(440,19){\vector(2,-1){30}}
\put(440,20){\line(-2,-1){30}} \put(440,20){\line(-2, 1){30}}
%\put(440,-10){\makebox(0,0){$E(N,\nu)$}}
\put(433,30){\makebox(0,0){\tiny{$d_1$}}} \put(433,10){\makebox(0,0){\tiny{$d_2$}}}
\put(488,42){\makebox(0,0){\tiny{$k_1-1$}}}
\put(488,7){\makebox(0,0){\tiny{$k_2-1$}}}
\put(375,5){\makebox(0,0){\tiny{$i_2-1$}}}
\put(375,35){\makebox(0,0){\tiny{$i_1-1$}}}
%\put(560,20){\makebox(0,0){($r$ arrowheads)}}
\dashline[3]{3}(440,22)(465,35)\put(465,35){\vector(2,1){5}}
\dashline[3]{3}(440,22)(465,9)\put(465,9){\vector(2,-1){5}}
\dashline[3]{3}(410,35)(395,35)\put(395,35){\vector(-1,0){5}}
\dashline[3]{3}(410,5)(395,5)\put(395,5){\vector(-1,0){5}}
\end{picture}

%We now (i) describe the Alexander polynomial of $f$, see
%(\ref{ss:splicingalex}), whose roots are (essentially) the
%monodromy eigenvalues of $f$, (ii) give a concise formula for
%$Z(s)=Z(F,W;s)$, and (iii) derive from this a natural choice for
%allowed $W$ yielding our goal.

\medskip
\noindent
Then  $N=2d_1d_2$ is the vanishing order of $f$ along the node $E$.
%One gets the next facts:

\smallskip

(i) By A'Campo's formula,
 the monodromy eigenvalues of $f$ are, besides the trivial eigenvalue $1$,
 precisely the roots of the polynomial
$$
\Lambda(t)=\frac {(t^N-1)^2}{(t^{N/d_1}-1)(t^{N/d_2}-1)}.
$$
 These are all $N$-th roots of unity that are {\it not}
simultaneously $(N/d_1)$-th and $(N/d_2)$-th roots of unity; in
other words all $\exp(2\pi i \frac uN)$ for which $d_1 \nmid u$ or
$d_2 \nmid u$.

\medskip
(ii) Using (\ref{ss:zetafunctions}) one has
$$
Z(f,\omega;s) = \frac 1{\nu + sN} \left( -2+ \frac {d_1}{i_1} +
\frac {d_2}{i_2} + \frac 1{k_1+s} + \frac 1{k_2+s} \right),
$$
 where (see (\ref{eq:nu}))
\begin{equation}\label{eq:nu-intro}
\nu = d_1d_2 (k_1+k_2-2) + d_2i_1+d_1i_2.
\end{equation}

We now investigate the candidate pole $s_0:= -\nu/N$ of the
zeta function. If  $s_0$ is a  pole of order one,
%its residue is (up to a factor $N$) equal to
%$$
%\calR:=-2+ \frac {d_1}{i_1} + \frac {d_2}{i_2} + \frac
%1{k_1-\nu/N} + \frac 1{k_2-\nu/N}.
%$$
one easily verifies that its residue $\calR$ is not identically
zero as function in the four variables $i_1,i_2,k_1,k_2$. Hence
$s_0$ is a pole of $Z(s)$ as soon as the algebraic equation
$\calR=0$ is not satisfied. It is also straightforward to compute
that $\calR$ is identically zero in $k_1$ and $k_2$ if $i_1=d_1$
and $i_2=d_2$, and that generally $\calR$ is not identically zero
otherwise.

\medskip
(iii) With respect to Goal (1),  if we wish to put only \lq
necessary\rq\ restrictions to realize it, the following is
a very natural choice for allowed $W$. Note first
that
 $d_\ell \mid \nu$ if and only if $d_\ell \mid i_\ell$ (by (\ref{eq:nu-intro})). We call $W$ {\it
allowed} if the following condition on $i_1$ and $i_2$ is
satisfied: \emph{if $d_1 | i_1$ and $d_2 | i_2$, then $i_1=d_1$
and $i_2=d_2$}. (There are no conditions on $k_1$ and $k_2$.)
Therefore, for $W$ allowed, if $\exp(2\pi is_0)$ is not a root of $\Lambda$,
then $d_1\mid i_1$ and $d_1\mid i_2$, hence $\calR=0$.

Moreover with this definition the divisor $W=0$, corresponding to
$i_1=i_2=k_1=k_2=1$, is clearly allowed (Goal (2)), and
Goal (3) is also satisfied. Indeed, fix a root
$\exp(2\pi i \frac uN)$ of $\Lambda(t)$. Since the numbers
$d_1$ and $d_2$ are coprime, there exist integers
$k_1,k_2,i_1,i_2$ (all positive if we desire so) such that $\nu$
in (\ref{eq:nu-intro}) satisfies $\nu \equiv u \mod N$. The
restrictions on the given $u$ imply that $d_1 \nmid i_1$ or $d_2
\nmid i_2$. Hence the constructed $W$ is allowed.

\smallskip

One can carry out without too much effort a similar analysis for
any plane curve germ $f$, or, more generally, for
any function $f$ on an IHS
germ for which the splice diagram is star-shaped, identifying allowed forms
$\omega$/divisors $W$ satisfying our Goal \ref{goal:in},
see (\ref{def:alloweddiagram}).
For arbitrary $f$ the situation is at first sight combinatorially
hopeless. Nevertheless, for them we use the concept of
{\it splicing} of a general splice diagram into star-shaped
building blocks, and we ask that the \lq restriction\rq\ of the
desired $W$ to any such star-shaped building block satisfies the
\lq natural\rq\ (already identified) conditions of allowedness.

Though conceptually appealing, it is not clear from the start that
such a \lq local\rq\ definition of allowedness will do the job, a
priori it is even not obvious that allowed forms/divisors exist on
arbitrary diagrams. It will turn out that there are plenty of
them, and at the end the proof of the first goal will be
(combinatorially) quite conceptual.

We still want to mention one important point regarding the introduction of Weil
divisors discussed in (\ref{bek:INTR1}).    In the singular setting,
in order to deal with our goals concerning the zeta functions associated to a given \emph{function} $f$,
by our inductive splicing strategy, we consider the restriction of the divisor of $f$ to the
star-shaped building blocks; the point is that usually the analytic realization of them
is a difficult issue deviating from the original main objective. This shows that
% we need to consider zeta functions associated to \emph{Weil divisors} $F$.
%(The analytic realization of the restrictions is not guaranteed; this makes
the introduction of Weil divisors is even necessary.

\section{Splice diagrams associated with normal surface singularities}\labelpar{s:1}
\subsection{Splice diagrams of surface--germs}\labelpar{ss:1}
Let $(X,0)$ be the germ of a complex normal surface singularity,
and $M$ be its link. It is well--known that $M$ is an oriented
plumbed 3--manifold, and any dual resolution graph might serve as
a plumbing graph for $M$. Let $\pi:\tilde{X}\to X$ be a good
resolution, that is, the exceptional divisor $E:=\pi^{-1}(0)$ is a
normal crossings divisor on the smooth complex surface $\tilde{X}$.
The topology of $\pi$ is codified in the dual graph $G=\Gx$
associated with the irreducible components $\{E_i\}_i$ of $E$:
each $E_i$ determines a vertex of $G$ with genus decoration
$[g(E_i)]$ and self--intersection $(E_i,E_i)$, while the edges of
$G$ correspond to intersection points $E_i\cap E_j$, cf.
\cite{La}. Since $X$ is normal, $G$ is connected. Let $I(G)$ be
the negative definite intersection form $(E_i,E_j)_{i,j}$. By
plumbing construction one recovers from $G$ both $M$ and (the
$C^\infty$--type of) $(\tilde{X},E)$.

We recall that $M$ is a rational homology sphere if and only if
$G$ is a tree and $g(E_i)=0$ for all $i$. Moreover, $M$ is an
integral homology sphere if additionally $\det(-I(G))=1$.
%In this last case we say that $(X,0)$ is an IHS germ.

\medskip
If $M$ is an {\it integral homology sphere}, then $G$ can
equivalently be  codified in a more condensed form via its splice
diagram $\Gamma=\Gax$, cf. \cite{EN}. The diagram $\Gamma$ is the
tree obtained from $G$ by replacing each maximal string of $G$  by
a single edge. Hence, $\Gamma$ is homeomorphic to $G$, but it has
no vertices of valency 2. Its vertices are either {\it nodes} (of
valency $\geq 3$) or {\it ends/boundary vertices} (of valency 1);
they correspond to the {\it nodes/rupture vertices} and {\it
boundary vertices}  of $G$ with valencies $\geq 3$ and $1$, respectively.
The decorations of $\Gamma$ are as follows. At each node $v$ of
$\Gamma$ one inserts a weight $d_{ve}$ on each incident edge $e$.
Let $G_{ve}$ be the connected component of $G\setminus \{v\}$  `in
the direction of $e$', then $d_{ve}:=\det(-I(G_{ve}))$. It is
proved in \cite[Ch. V]{EN} that the decorated graphs $G$ and
$\Gamma$ determine each other.

The decorations $\{d_{ve}\}_{v,e}$ of $\Gamma$ satisfy the next
compatibility conditions:
\begin{equation}\label{eq:compcon}
\left\{\begin{array}{l}
(a) \ \ \ \mbox{$d_{ve}\geq 1$},\\
(b) \ \ \ \mbox{$\{d_{ve}\}_e$ are pairwise coprime integers for any fixed node $v$},\\
(c) \ \ \ \mbox{any   `edge determinant' $q_e$ is positive.}
\end{array}\right.\end{equation}
Part (c) means the following:
for any fixed edge $e$ with end--nodes $v$ and $w$,
let the decorations at $v$  be  $d_{ve}$ and $\{d_{ve_i}\}_i$, and
similarly  $d_{we}$ and $\{d_{we'_j}\}_j$ at $w$. Then
(see \cite[\S 24]{EN}):
\begin{equation}\label{eq:edgedet}
q_e := d_{ve}d_{we}-\prod_id_{ve_i}\prod_jd_{we'_j}>0.
\end{equation}
If $\pi$ is the {\it minimal} good resolution, then $\Gamma$ also
is minimal, in the sense that all the decorations $d_{ve}$ are
strictly greater than 1, provided that  $e$ connects $v$ with a
{\it boundary vertex}. If $G$, or $\Gamma$, is not minimal, then
such a restriction does not hold. By `splice calculus', one can
delete such an edge with decoration 1 and the supported boundary
vertex, getting a new equivalent diagram. All these equivalent
diagrams represent the same 3--manifold $M$. The nodes (and the
corresponding star--shaped subgraphs around them) correspond
exactly to the (minimal or non--minimal) Jaco--Shalen--Johannson
decomposition of $M$ (depending on the minimality of $\Gamma$),
each star--shaped subgraph describing a Seifert piece.

%We will use the notation $\calV$ for the vertices, $\calN$ for
%nodes, $\calB$ for boundary vertices and $\calE$ for edges of
%$\Gamma$. Moreover, we call {\it special edges} those connecting
%two nodes, denoted by $\calE^s$.

\begin{bekezdes}\label{be:11}
The diagram $\Gamma$ (or $G$) contains the same amount of
information as the link $M$, hence working with it  we  disregard
completely the analytic structure of $(X,0)$. In the sequel we
regard  $\Gamma$ as an abstract splice diagram which
satisfies (\ref{eq:compcon}). In fact, any such diagram can be
realized by some singularity link, but the corresponding analytic
structure(s) can be hard to determine and are irrelevant from the
point of view of many invariants.

In this correspondence, in fact, there is an `easy case', namely
when $(X,0)$ is smooth and $M$ is the 3--sphere $S^3$: this is
happening  if and only if in any (maybe non--minimal) splice
diagram which represents them the following fact holds: for any
node $v$ at most two of the integers $\{d_{ve}\}_e$ can be
strictly greater than 1 (that is,  any Seifert piece is an $S^3$).
\end{bekezdes}
\begin{bekezdes}\labelpar{be:semi}{\bf Semigroup condition for $\Gamma$.}
It is convenient to introduce some other combinatorial invariants
of a splice diagram $\Gamma$ as well. If $v$ and $w$ are two
vertices of $\Gamma$, we set $\ell_{vw}$ for the product of the
edge weights that are adjacent to, but not on, the path  from $v$
to $w$. Furthermore, for each node $v$, let $d_v$ be the product
of edge weights adjacent to $v$.

For any node $v$ and adjacent edge $e$, let $\Gamma_{ve}$ be the
connected component of $\Gamma\setminus \{v\}$ in the direction of
$e$. We say, following \cite{NW1}, that the node $v$ and adjacent
edge $e$ satisfy the {\it semigroup condition} if
\begin{equation*}\label{eq:semcond}
d_v \ \mbox{is in the semigroup generated by the $\ell_{vw}$,
where $w$ is a boundary vertex of $\Gamma$ in $\Gamma_{ve}$}.
\end{equation*}
By definition, a {\it splice diagram $\Gamma$ satisfies the semigroup
condition}, if all pairs $(v,e)$ as above satisfy the semigroup condition.
\end{bekezdes}

\subsection{Splice diagrams of function--germs/divisors}\labelpar{ss:2} Assume
that $f:(X,0)\to (\C,0)$ is the germ of an analytic function on an
IHS germ $(X,0)$. If $\pi: \tilde{X}\to X$ is an embedded
resolution of the pair $(X, f^{-1}(0))$, then the topology of $f$
is described by the embedded resolution graph of $f$. This
consists of the dual graph $\Gxf$ of the exceptional divisors
decorated by the self--intersections, and supplemented by the
following data: each irreducible component of the strict transform
intersecting an irreducible exceptional divisor $E_i$ is codified
by an arrowhead supported by that vertex of $G$ which corresponds
to $E_i$. Additionally, each vertex and arrowhead inherits a
multiplicity decoration, the vanishing order of $f\circ \pi $
along the corresponding irreducible divisor.

Clearly, the divisor  $\div(f\circ \pi)$ on $\tilde{X}$ is a
principal divisor, hence $(\div(f\circ \pi),E_i)=0$ for any $i$.
This (and the fact that $\det(I(G))\not=0$) shows that all the
multiplicities of the strict transforms of $f=0$ determine
$\div(f\circ \pi)$ completely (compare with (\ref{eq:mult})). In
fact, this property identifies  $\div(f\circ \pi)$ as the pullback
of the divisor $f=0$ on $X$.

More generally, a divisor $F$ on $\tilde{X}$ supported on $E$ and
on some noncompact transversal slices of $E$ is called {\it
$P$--divisor} of $\tilde{X}$ if $(F,E_i)=0$ for any $i$. If we
start with an arbitrary Weil divisor $F'$ on $X$ and $\pi$ is an
embedded resolution of the pair $(X,F')$,  then there is an unique
$P$--divisor $F$ on $\tilde{X}$ whose arrow-multiplicities agree
with the multiplicities of the components of $F'$. This $F$ will
be called the pullback of $F'$. On the other hand, if $F$ is a
$P$-divisor on $\tilde{X}$, projecting down its noncompact
components (by keeping their multiplicities)
 we get a Weil divisor
$F'$ on $X$ such that $F$ is the pullback of $F'$.  Hence, $F$ and
$F'$  determine each other (thus we will sometimes write $F$ for
both of them).

A $P$--divisor is  codified in the graph $G$ similarly as the
principal divisors via its arrowheads and multiplicity system, and
it is  uniquely determined by the arrowheads and their
multiplicities. This pair is denoted by $\GxF$.

In all of our topological--combinatorial discussions, we regard
$\GxF$ as a {\it combinatorial} object, a plumbing representation
of a pair $(M,M\cap F')$.  We do not ask the analytic realization
of $F'$ as a principal divisor (and even if we do in some
discussions, we always consider analytic realizations and not
algebraic ones).

\begin{bekezdes}
The splice diagram $\Gamma=\Gaxf$ associated with $(X,f)$
(or more generally, $\Gamma_\pi(X,F)$ associated with $G_\pi(X,F)$)
is constructed similarly as above, but now the nodes are those
vertices which have valency $\geq 3$ including the edges supporting arrowheads.
Moreover,  the new splice diagram contains arrowheads and multiplicity decorations as well (see \cite{EN}).

For the arrowheads of the {\it minimal} splice diagram we use the
following principle. If an arrowhead $a$ of $G$ is supported by a
vertex $v$ of $\GxF$ with valency $\geq 3$ (including the edges
supporting arrowheads) then $v$ becomes a node of $\Gamma$ and $a$
becomes an arrowhead of $\Gamma$ supported by $v$. The weight of
the edge at $v$ supporting such an arrowhead is either 1 or is
missing. Next, assume that the arrowhead $a$ of $\GxF$ is
supported by a vertex $v$ of $G$ of valency $ 2$. This means that
$v$ is a boundary vertex of $\Gx$ with an arrowhead. If we forget
about the arrowheads and we determine $\Gamma$ from $\Gx$, then
$v$ becomes a boundary vertex of $\Gax$. Then, reconsidering the
arrowheads, this boundary vertex in $\GaxF$ is replaced by an
arrowhead.
%(Note that if there is an arrowhead of $\GxF$ supported
%by a vertex with valency 1, then this vertex is the only vertex of
%$G$, and it must have self--intersection $-1$. If the arrowhead is
%its only arrowhead then
%the splice diagram has a unique vertex, with one arrowhead whose weight is 1, or is missing. In fact then
%$(X,F')$ determines a smooth plane curve germ.)
Summed up: in minimal diagrams, all the arrowheads of $\GaxF$ are
supported by nodes, nevertheless they have two different
interpretations: if the weight of an edge (at the node $v$)
supporting an arrowhead is $\geq 2$ then the corresponding strict
transform intersects the corresponding boundary curve of $G$,
while if the weight is 1 (or it is missing) then the strict
transform intersects that exceptional component which corresponds
to the node. (In the language of knots: weight $\geq2$ gives a
special Seifert fiber, while weight 1 a generic Seifert fiber in
the corresponding Seifert piece.)

Nevertheless, sometimes we also allow non--minimal representations
(which appear naturally when we splice the diagrams). Namely, the
following calculus provides equivalent diagrams:

\begin{picture}(300,55)(-50,-10)
\put(240,20){\circle*{4}} \put(270,20){\circle*{4}}
\put(240,20){\vector(1,0){60}} \put(240,20){\line(-2,-1){30}}
\put(240,20){\line(-2, 1){30}}
\put(220,23){\makebox(0,0){$\vdots$}}
\put(248,26){\makebox(0,0){\tiny{$d$}}}

\put(140,20){\makebox{$=$}} \put(110,5){\makebox{(for any $d\geq
1)$}} \put(40,20){\circle*{4}} \put(40,20){\vector(1,0){30}}
\put(40,20){\line(-2,-1){30}} \put(40,20){\line(-2, 1){30}}
\put(20,23){\makebox(0,0){$\vdots$}}
\put(48,26){\makebox(0,0){\tiny{$d$}}}
\end{picture}

Also, $\GaxF$ inherits the multiplicity of each arrowhead and node
from $\GxF$ (with the same geometric interpretation). In the case of principal divisors,
$f$ defines an isolated singularity if and only if all
arrowhead--multiplicities are 1.

We will use the notation $\calV$ for the vertices, $\calN$ for the
nodes, $\calB$ for the boundary vertices and $\calE$ for the edges of
$\GaxF$. Moreover, we call {\it special edges} those connecting
two nodes, denoted by $\calE^s$. The arrowheads will be denoted by
$\calA_F$, the multiplicities by  $(N_w)$, $w\in
\calV\cup\calA_F$.

Again, we can regard  $\GaxF$ as an abstract graph, we do not ask
about  the analytic realization of the pair $(X,F)$ (although, if
the graph satisfies (\ref{eq:compcon}) and $N_a> 0$ for all
$a\in\calA_F$, then some analytic realization exists, cf. \cite[\S\,24]{EN}.)
\end{bekezdes}
\begin{bekezdes}\labelpar{2.2.2}
Recall that for any $P$--divisor the multiplicities of the arrowheads (and the
combinatorics of the splice diagram without the other
multiplicities) determine all the multiplicities of the vertices.
(In the case of the graph $G$, this is done via $I(G)^{-1}$.) In the language of
$\Gamma$ one has the following.
Let $v$ be a fixed vertex, and let $a$ be an arrowhead.
Then define $\ell_{va}$ as the product of the edge weights that
are adjacent to, but not on, the path  from $v$ to $a$. Then the
multiplicity $N_v$ of any vertex $v$ is given by, cf. \cite[\S 10]{EN},
\begin{equation}\label{eq:mult}
N_v=\sum_{a\in\calA_F} N_a\ell_{va}.
\end{equation}
In particular, if
 $F'=\sum_a N_aF'_a$ is a Weil divisor on $X$
with the $F'_a$ irreducible, and $\{F_a\}_{a\in\calA_F}$ are  the
strict transforms and $F_v=E_v$ the exceptional curves, then the
pullback $F=\pi^*(F')$  of $F'$ is represented in the splice
diagram by $\sum_{v\in \calA_F\cup\calV}N_vF_v$.
\end{bekezdes}
\begin{bekezdes}\labelpar{be:decorat} The splice diagram $\Gamma_\pi(X,F)$ also satisfies the {\bf compatibility
conditions} (\ref{eq:compcon}).  If one deletes all the arrowheads
and decorations of $\Gamma_\pi(X,F)$ associated with $F$ we
recover a possible (maybe non--minimal) splice diagram of $X$.
Nevertheless, by this simplification, some of the nodes might
disappear.
%hence $\Gamma_\pi(X,F)$ might contain more decorations.
\end{bekezdes}
\begin{bekezdes}\labelpar{be:semigro}{\bf Semigroup condition for $\Gamma_\pi(X,F)$.}
In the presence of a divisor, the semigroup condition
(\ref{be:semi}) will be modified as follows. We say that   $\GaxF$
satisfies  the semigroup condition if all  pairs $(v,e)$ satisfy
(\ref{be:semi}), provided that $v$ is a node
 with adjacent edge $e$, and the connected part of
$\GaxF \setminus \{v\}$ in the direction of $e$ contains {\it no
arrowheads}.

\begin{remark}\labelpar{re:1234}
(1) The splice diagram associated with the minimal embedded
resolution  of a plane curve singularity always satisfies the
semigroup condition. Indeed, there is only one sub--diagram (see
below), where the condition is not satisfied trivially: the nodes
of this sub--diagram are not sitting on geodesic paths connecting
two arrowheads of the diagram.

\begin{picture}(400,75)(0,0)
%\put(20,40){\circle*{4}}
\put(70,40){\circle*{4}} \put(120,40){\circle*{4}}
\put(170,40){\circle*{4}} \put(250,40){\circle*{4}}
\put(300,40){\circle*{4}}
%\put(70,10){\circle*{4}}
 \put(120,10){\circle*{4}}
\put(170,10){\circle*{4}} \put(250,10){\circle*{4}}
\put(300,10){\circle*{4}}

\put(70,40){\line(1,0){120}}\put(230,40){\line(1,0){70}}
\put(300,40){\line(2,1){30}}\put(300,40){\line(2,-1){30}}
%\put(70,40){\line(0,-1){30}}
\put(170,40){\line(0,-1){30}} \put(120,40){\line(0,-1){30}}
\put(250,40){\line(0,-1){30}} \put(300,40){\line(0,-1){30}}

%\put(58,45){\makebox(0,0){$a_1$}}
\put(108,45){\makebox(0,0){\tiny{$a_1$}}}
\put(158,45){\makebox(0,0){\tiny{$a_2$}}}
\put(233,45){\makebox(0,0){\tiny{$a_{r-1}$}}}
\put(288,45){\makebox(0,0){\tiny{$a_r$}}}

%\put(72,30){\makebox(0,0)[l]{$p_1$}}
\put(122,30){\makebox(0,0)[l]{\tiny{$p_1$}}}
\put(172,30){\makebox(0,0)[l]{\tiny{$p_2$}}}
\put(252,30){\makebox(0,0)[l]{\tiny{$p_{r-1}$}}}
\put(302,30){\makebox(0,0)[l]{\tiny{$p_r$}}}

\put(208,40){\makebox(0,0){$\ldots$}}
\put(318,43){\makebox(0,0){$\vdots$}}
%\put(70,55){\makebox(0,0){$v_1$}}
\put(120,60){\makebox(0,0){\tiny{$v_1$}}}
\put(170,60){\makebox(0,0){\tiny{$v_2$}}}
\put(250,60){\makebox(0,0){\tiny{$v_{r-1}$}}}
\put(300,60){\makebox(0,0){\tiny{$v_r$}}}

\end{picture}

For this part,  the semigroup condition follows from the
positivity of the edge determinants and the fact (used literately
several times)  that for coprime positive integers $a$ and $p$,
each integer larger than $ap$ belongs to the semigroup generated
by $a$ and $p$. (For a more general argument, see (\ref{ss:SC}).)

(2) If $F=0$, we recover the semigroup condition of
(\ref{be:semi}). On the other hand, the semigroup condition of
$\GaxF$, and of the diagram  obtained from $\GaxF$  by deleting the information
regarding $F$, are independent. This fact is exemplified next.
Note that in the semigroup condition the position of the arrowheads is important,
while the multiplicity system of $F$ is irrelevant,
hence we will omit the multiplicities  from the next diagrams.

(3) It is possible that $\Gax$ does not satisfy the
semigroup condition, but  $\GaxF$ for some $F$ does. Take for
instance any $P$-divisor $F$ with enough arrowheads such that
$\Gamma\setminus \{v\}$ contains an arrowhead in the direction of
$e$ for each pair $(v,e)$.

(4) On the other hand, it is possible that $\Gax$ satisfies the
semigroup condition, but $\GaxF$ does not, due to the appearance
of the new nodes which support the arrowheads. Take for example
the following  resolution graph and the corresponding splice
diagram.

\begin{picture}(400,55)(80,-5)

\put(125,25){\circle*{4}} \put(150,25){\circle*{4}}
\put(175,25){\circle*{4}} \put(200,25){\circle*{4}}
\put(225,25){\circle*{4}} \put(150,5){\circle*{4}}
\put(200,5){\circle*{4}} \put(125,25){\line(1,0){100}}
\put(150,25){\line(0,-1){20}} \put(175,25){\vector(0,-1){20}}
%\put(370,5){\circle*{4}}
\put(200,25){\line(0,-1){20}} \put(125,35){\makebox(0,0){\tiny{$-2$}}}
\put(150,35){\makebox(0,0){\tiny{$-1$}}}
\put(175,35){\makebox(0,0){\tiny{$-13$}}}%\put(175,-3){\makebox(0,0){\tiny{$(N)$}}}
\put(200,35){\makebox(0,0){\tiny{$-1$}}}
\put(225,35){\makebox(0,0){\tiny{$-2$}}} \put(160,5){\makebox(0,0){\tiny{$-3$}}}
\put(210,5){\makebox(0,0){\tiny{$-3$}}}%\put(322,5){\makebox(0,0){$-7$}}

\put(325,25){\circle*{4}} \put(350,25){\circle*{4}}
\put(375,25){\circle*{4}} \put(400,25){\circle*{4}}
\put(425,25){\circle*{4}} \put(350,5){\circle*{4}}
\put(400,5){\circle*{4}} \put(325,25){\line(1,0){100}}
\put(350,25){\line(0,-1){20}} \put(375,25){\vector(0,-1){20}}
%\put(370,5){\circle*{4}}
\put(400,25){\line(0,-1){20}} \put(345,30){\makebox(0,0){\tiny{$2$}}}
\put(355,30){\makebox(0,0){\tiny{$7$}}}\put(370,30){\makebox(0,0){\tiny{$1$}}}
\put(395,30){\makebox(0,0){\tiny{$7$}}}\put(380,30){\makebox(0,0){\tiny{$1$}}}
%\put(375,35){\makebox(0,0){\tiny{$-13$}}}
%\put(375,-3){\makebox(0,0){\tiny{$(N)$}}}
\put(406,30){\makebox(0,0){\tiny{$2$}}}
%\put(425,35){\makebox(0,0){\tiny{$-2$}}}
\put(355,20){\makebox(0,0){\tiny{$3$}}}
\put(405,20){\makebox(0,0){\tiny{$3$}}}%\put(322,5){\makebox(0,0){$-7$}}
\end{picture}

\vspace{3mm}

\noindent Then $\GaxF$ does not satisfy the semigroup condition at
the central node, although if we delete the arrowhead then
$\Gamma_\pi(X)$ does since that node disappears.
%(For analytic realization, see e.g. \cite[(5.3)]{SI}.)
\end{remark}
\end{bekezdes}

\subsection{Splice diagrams and differential forms}\labelpar{ss:3}
We still consider an IHS singularity $(X,0)$, and a function germ
$f$ or a nonzero effective Weil divisor $F'$ on $X$. We fix an embedded
resolution $\pi:\tilde{X}\to X$ of $(X,f^{-1}(0))$ or $(X,F')$ as
in (\ref{ss:2}).

Next, we also wish to incorporate in the picture  a differential
(meromorphic) 2--form $\omega$ or a Weil divisor $W$. The basic
models for us are the following situations.

\vspace{2mm}

\noindent $\bullet$ \ Assume that $(X,0)$ is smooth, hence $f$
determines a plane curve singularity. Then classically one
considers $\omega_0=dx\wedge dy$ (for some local coordinates
$(x,y)$ of $(X,0)$), or,  more generally, $\omega =g\omega_0$ for
some local analytic germ $g$ on $(X,0)$. In this case the pullback
$\tilde{\omega} :=\pi^*(\omega)$ is clearly holomorphic on
$\tilde{X}$.

\vspace{1mm}

\noindent $\bullet$ \ Generalizing to the singular setting, we
assume that $(X,0)$ is Gorenstein and that $\omega_0$ is a nowhere
vanishing holomorphic 2--form on $X\setminus \{0\}$. Then, for any
holomorphic germ $g$ on $X$, the pullback
$\tilde{\omega}:=\pi^*(g\omega_0)$ has a meromorphic extension
over the exceptional curve. If $(X,0)$ is rational then it is
holomorphic, otherwise it might have poles.

\vspace{2mm}

We will enrich the diagrams with the vanishing orders of
$\div(\tilde{\omega})$ along the corresponding irreducible
divisors. Again, this only depends on $\div(\tilde{\omega})$, not
on $\tilde{\omega}$ itself; so we rather incorporate from the
start a Weil divisor $W'$ on $X$.
In the special cases above $\div(\omega)=\div(g)=W'$ and
$\div(\tilde{\omega}) = K_\pi + \pi^*W'$.
 Also, it will be natural in our context to restrict the
possible support of $W'$; we assume that the following facts hold:

\begin{enumerate}
\item
$\pi$ is also an embedded resolution of $\supp(F') \cup
\supp(W')$;
\item
if  $W':=\sum_{k}(i_k-1)W'_k$ is the irreducible decomposition of
$W'$, then the strict transform of each $W'_k$ either is identical
with the strict transform $F_a$ of one of the $F'_a$, or, it
intersects a boundary component of $\GxF$. Moreover, each boundary
component can intersect at most one of the strict transforms of
the $W'_k$.
\end{enumerate}
We denote the $P$--divisor $\pi^*(W')$ by $W$. In our diagrams,
the strict transform $W_k$ of a $W'_k$ that intersects $E_i$ will
be denoted by a dashed arrowhead, attached to the vertex
corresponding to $E_i$. If $W_k$ agrees with one of the $F_a$,
then the dashed arrowhead doubles the associated ordinary
arrowhead, while if a boundary vertex does not support any
ordinary arrowhead but supports a $W_k$, then its dashed arrow is
attached to this vertex. We will denote by $\calA_W$ the dashed
arrowheads of $W$. The  \lq double\rq\ arrowheads are given by
$\calA_F \cap \calA_W$.
%$\calA$ will stay for $\CalA$.

Again, we will identify the following diagrams.

\begin{picture}(300,60)(-50,-10)
\put(240,20){\circle*{4}} \put(270,20){\circle*{4}}
\put(240,20){\line(1,0){30}} \put(240,20){\line(-2,-1){30}}
\put(240,20){\line(-2, 1){30}}
\put(220,23){\makebox(0,0){$\vdots$}}
\put(248,26){\makebox(0,0){\tiny{$1$}}}

\dashline[3]{3}(270,20)(300,20)\put(300,20){\vector(1,0){5}}
\dashline[3]{3}(40,20)(70,20)\put(70,20){\vector(1,0){5}}
\put(140,20){\makebox{$=$}}
%\put(110,5){\makebox{(for any $d\geq 1)$}}
\put(40,20){\circle*{4}} %\put(40,20){\vector(1,0){30}}
\put(40,20){\line(-2,-1){30}} \put(40,20){\line(-2, 1){30}}
\put(20,23){\makebox(0,0){$\vdots$}}
%\put(50,26){\makebox(0,0){$1$}}
\end{picture}

Finally, we have to add to the decorations the multiplicities of
the components of $K_\pi + W$. By technical (and traditional)
reasons, the multiplicity of a  dashed arrowhead  is denoted as
above by $i_a-1$, while the multiplicity of a vertex $v$ by
$\nu_v-1$. Recall that, in general, we do not impose for the
integers $i_a$ and $\nu_v$ to be positive. We denote this enriched
diagram as $\GaxFW$.

Note that in the three different levels $\Gax$, $\GaxF$ and
$\GaxFW$, the valencies of a fixed vertex $v$ of $\Gamma$ are not
the same; these three valencies will be denoted by $\delta_v$,
$\delta_v'$ and $\delta_v''$ respectively.

\begin{bekezdes} \label{be:KK}
In order to identify the coefficients $\nu_v-1$ of $K_\pi+W$,
we only must describe the coefficients of the $E_v$ in $K_\pi$,
since those of $W$ are as in (\ref{2.2.2}).

The divisor $K=K_\pi$ is determined in the  graph
$\Gx$ by the {\it adjunction relations}
\begin{equation}\label{eq:adj}
(K+E,E_i)=\delta_i-2,
\end{equation}
where here $\delta_i$ is the  valency of the vertex $i$ in $\Gx$.
Let $L$ be the lattice $H_2(\tilde{X},\Z)$ with the intersection
form $I(G)$. Then for each vertex $i$ of $G$ one can define
$E^*_i\in L$ with $(E^*_i,E_j)=-\delta_{ij}$ (the {\it negative }
of the Kronecker delta), i.e. the sign--modified dual basis of
$L$. (Since $\det(-I(G))=1$, they are well--defined.) Therefore,
(\ref{eq:adj}) reads as
%\begin{equation}\labelpar{eq:adj2}
$K+E=\sum_{i}\, (2-\delta_i)E^*_i$.
%\end{equation}
Since all the valency 2 vertices of $G$ are irrelevant,  the
relation descends naturally to the level of $\Gamma=\Gax$ (this means
that the multiplicities of $K+E$ along the vertices of $\Gamma$  are those
given on the right):
\begin{equation}\label{eq:adj2}
K+E=\sum_{w\in\calV}\, (2-\delta_w)E^*_w.
\end{equation}
Each $E^*_w$, considered as divisor supported on $E$, together
with a non--compact  irreducible divisor
%$\tilde{F}_w$
intersecting $E_w$ in a smooth point of $E$, form a $P$--divisor.
%the pullback by $\pi$ of $\pi(\tilde{F}_w)$,
Hence, we can use (\ref{eq:mult}) and we obtain that the multiplicity of
$K+E$ along any {\it node} $v$ of $\Gamma$ (or in the presence of a
divisor $F$, along any node of
$\Gamma_\pi(X,F)$) is
\begin{equation}\label{eq:can}
\sum_{w\in\calV}\, (2-\delta_w)\ell_{vw}.
\end{equation}
In the presence of the divisor $W'=\sum_{a\in \calA_W} (i_a-1)W'_a$
on $X$, again (\ref{eq:mult}) then yields
\begin{equation}\label{eq:nu}
\nu_v=\sum_{w\in\calV}\, (2-\delta_w)\ell_{vw} + \sum_{a\in
\calA_W} (i_a-1)\ell_{va}.
\end{equation}
 We warn the reader about the following
fact. The sum (\ref{eq:can}) is associated with  $\Gamma_\pi(X)$.
Therefore, in the applications later, even if we start with some
$\GaxF$, those ordinary arrowheads associated with $F$ with weight
of their supporting edge greater than 1 should be replaced by
boundary vertices (hence the summation index $\calV$ should be the
set of vertices of $\Gax$). This is valid for the left sum of
(\ref{eq:nu}) too.
\end{bekezdes}
\begin{bekezdes}\labelpar{be:notation}{\bf Notation.}
In all the next combinatorial formulas associated with a
splice diagram   $\Gamma_\pi(X,F,W)$, the resolution $\pi$
or the geometric source of the diagram is irrelevant.
%Moreover, the information in a Weil divisor $F$ of $X$ and
%in its pullback $P$--divisor $\pi^*F$ is the same, hence we will disregard the
%divisor on $X$ and we just write $F$ for its pullback.

In particular, in the sequel $\Gamma(F,W) $ means a splice diagram with two
$P$--divisors $F$ and $W$. If $W=0$ then we just write $\Gamma(F)$.
The divisor $W$ will always be linked with $K$ (determined in $\Gamma$ by (\ref{be:KK}))
in the expression $K+W$.
\end{bekezdes}

\subsection{Topological zeta functions of diagrams}\labelpar{ss:zetafunctions}
In \cite{Ve2} the second author derived a formula for the
topological zeta function of a plane curve germ $f$ in terms of
its so-called relative log canonical model; this can be
interpreted as being in terms of $\Gaxf$ (where $\pi$ is minimal).
The same proof yields a similar formula in our more general
context of (\ref{ss:3}), with the divisors $F'$ and $W'$ on the germ
$(X,0)$. In fact one associates in this way a zeta function to a
decorated diagram $\Gamma(F,W)$, cf. (\ref{be:notation}). We want
to formalize this, since our technique to study the topological
zeta function is in fact a \lq splicing formula\rq\ for zeta
functions of decorated splice diagrams.

Therefore, let us consider a decorated splice diagram
$\Gamma(F,W)$ as in (\ref{be:notation}). Moreover, it is
convenient to associate multiplicities $N_a$ and $i_a-1$ to {\it
all} $a\in \CalA$, that is, we put $N_a=0$ for $a\in\calA_W
\setminus \calA_F$ and $i_a=1$ for $a\in\calA_F \setminus
\calA_W$.

%\begin{definition}\label{I:zetadiagram} Let $\Gamma=\Gamma(F,W)$ be such a diagram.
%We require for each $a\in\CalA$ that $(N_a,i_a)\neq(0,0)$.
%For a node $v$, let $(\CalA)_v$ and $\calB_v$ be the
%(ordinary and/or dashed) arrowheads and boundary vertices,
%respectively, attached at $v$. Denote the weight at $v$ on the
%incident edge in the direction of such an arrowhead $a$ or
%boundary vertex $w$ by $d_{va}$ and $d_{vw}$, respectively. For a
%special edge $e$, let $v$ and $w$ denote its end vertices, and $q_e$ the edge determinant
%(\ref{eq:edgedet}). Then the
%zeta function $Z(\Gamma)$ of the diagram $\Gamma$ is
%\begin{equation*}\begin{split}
%Z(\Gamma)=Z(\Gamma;s):= \sum_{v\in\calN} \frac1{\nu_v+sN_v}\left(
%2-\delta_v'' + \sum_{w\in \calB_v} d_{vw} +\sum_{a \in
%(\CalA)_v} \frac{d_{va}}{i_a+sN_a}\right)\\ +
%\sum_{e\in\calE^s} \frac{q_e}{(\nu_v+sN_v)(\nu_w+sN_w)}.
%\end{split}\end{equation*}
%\end{definition}

%CAN WE REPLACE THE ABOVE DEFINITION 2.4.1 BY THE NEXT ONE??
%(if yes, cut the old one and delete 2 from the label of the second one)\\
%\marginpar{?????????????????}

\begin{definition}\label{I:zetadiagram} Let $\Gamma=\Gamma(F,W)$ be such a diagram.
We require for each $a\in\CalA$ that $(N_a,i_a)\neq(0,0)$. For a
node $v$, let $(\CalA)_v$ and $\calB_v$ be the (ordinary and/or
dashed) arrowheads and boundary vertices, respectively, attached
at $v$. Denote the weight at $v$ on the incident edge in the
direction of such an arrowhead $a$ or boundary vertex $w$ by
$d_{va}$ and $d_{vw}$, respectively. For any $w\in \calB_v$, let
$i_w-1$ be the decoration of the dashed arrowhead supported by
$w$; if such an arrowhead does not exist then set  $i_w=1$. For a
special edge $e$, let $v$ and $w$ denote its end vertices, and
$q_e$ the edge determinant (\ref{eq:edgedet}). Then the zeta
function $Z(\Gamma)$ of the diagram $\Gamma$ is
\begin{equation*}\begin{split}
Z(\Gamma)=Z(\Gamma;s):= \sum_{v\in\calN} \frac1{\nu_v+sN_v}\left(
2-\delta_v'' + \sum_{w\in \calB_v} \frac{d_{vw}}{i_w}
 +\sum_{a \in
(\CalA)_v} \frac{d_{va}}{i_a+sN_a}\right)\\ +
\sum_{e\in\calE^s} \frac{q_e}{(\nu_v+sN_v)(\nu_w+sN_w)}.
\end{split}\end{equation*}
\end{definition}

%\begin{definition}\label{I:zetadiagram3} Let $\Gamma=\Gamma(F,W)$ be such a diagram.
%We require for each $a\in\CalA$ that $(N_a,i_a)\neq(0,0)$.
%
%For a node $v$, let $\calL_v$ be the legs attached at $v$,
%abutting at an (ordinary and/or dashed) arrowhead or a boundary
%vertex. The multiplicity of the ordinary and dashed arrowhead at
%such a leg $\ell$ is $N_\ell$ and $i_\ell -1$, respectively. (When
%it does not occur, we have $N_\ell=0$ and $i_\ell=1$,
%respectively.) Denote the weight at $v$ on $\ell$ by $d_{v\ell}$.
%
%For a special edge $e$, let $v$ and $w$ denote its end vertices,
%and $q_e$ the edge determinant (\ref{eq:edgedet}). Then the zeta
%function $Z(\Gamma)$ of the diagram $\Gamma$ is
%\begin{equation*}
%Z(\Gamma)=Z(\Gamma;s):= \sum_{v\in\calN} \frac1{\nu_v+sN_v}\left(
%2-\delta_v'' +\sum_{\ell \in \calL_v}
%\frac{d_{v\ell}}{i_\ell+sN_\ell}\right) + \sum_{e\in\calE^s}
%\frac{q_e}{(\nu_v+sN_v)(\nu_w+sN_w)}.
%\end{equation*}
%\end{definition}

\section{Splicing the diagrams and their invariants}\labelpar{s:2}

\subsection{Splicing the diagrams}\labelpar{ss:splicingdiagrams} The main advantage of the splice diagrams is that they
describe in an ideal way the splice (non--minimal JSJ--)
decomposition of the 3--manifold $M$ into its Seifert pieces:
while doing this operation, the decorations follow rather simple
rules. The behavior of the multiplicities $\{N_w\}_w$ associated
with a principal divisor $f$  is classical, it was developed in \cite{EN}.

It is easy to see that any $P$--divisor follows the same formula.
On the other hand,  the rules for the
 numbers $\{\nu_v\}_v$  are slightly more involved, and we were not
able to find them in the literature (though, see  the `simpler'
situation considered in \cite{NV}).  In this subsection, we will
present these splice formulae.

Subsection (\ref{be:d}) treats the case $\Gamma=\Gax$,
(\ref{be:f}) the case $\Gaxf$ and its generalization $\Gamma(F)$,
while (\ref{be:fo}) the case $\Gamma(F,W)$.

\begin{bekezdes}\labelpar{be:d} {\bf Splicing $\Gamma$.}
First, recall that splicing along the edge $e$ with end--nodes
$v_L$ and $v_R$ (left/right) is the operation which replaces the
left diagram $\Gamma$ into the two diagrams $\Gamma_L$  and
$\Gamma_R$ (containing $v_L$ and $v_R$ respectively), with two new
end vertices $\bar{v}_L$ and $\bar{v}_R$. (All the other parts of
the diagrams are kept unmodified.) If either $d$ or $d'$ is 1,
then the corresponding leg in $\Gamma_L$ or $\Gamma_R$ can be
deleted in the minimal representation, but we prefer to keep it.
In this way, the valency of the vertices $v_L$ and $v_R$ stays
unmodified. Moreover, when we equip such a leg later with an
arrowhead, it cannot be deleted.
\end{bekezdes}

\begin{picture}(100,75)(-25,-20)

\put(40,20){\circle*{4}} %\put(80,20){\circle*{4}}
\put(120,20){\circle*{4}} \put(40,20){\line(1,0){80}}
\put(40,20){\line(-2,-1){30}} \put(40,20){\line(-2, 1){30}}
\put(20,23){\makebox(0,0){$\vdots$}} \put(120,20){\line(2,-1){30}}
\put(120,20){\line(2, 1){30}}
\put(137,23){\makebox(0,0){$\vdots$}}
\put(42,-10){\makebox(0,0){$v_L$}}
\put(118,-10){\makebox(0,0){$v_R$}}
\put(55,26){\makebox(0,0){$d$}} \put(105,26){\makebox(0,0){$d'$}}
\put(138,35) {\makebox(0,0){$d'_1$}}
\put(138,5){\makebox(0,0){$d'_{n'}$}} \put(25,35)
{\makebox(0,0){$d_1$}} \put(25,5){\makebox(0,0){$d_n$}}
\put(80,-10){\makebox(0,0){$e$}}

\put(160,20){\vector(1,0){40}}
\put(164,25){\makebox(0,0)[l]{\tiny{splicing}}}

\put(240,20){\circle*{4}}
\put(270,20){\circle*{4}}\put(290,20){\circle*{4}}
\put(320,20){\circle*{4}}
\put(240,20){\line(1,0){30}}\put(290,20){\line(1,0){30}}
\put(240,20){\line(-2,-1){30}} \put(240,20){\line(-2, 1){30}}
\put(220,23){\makebox(0,0){$\vdots$}}
\put(320,20){\line(2,-1){30}} \put(320,20){\line(2, 1){30}}
\put(337,23){\makebox(0,0){$\vdots$}}
\put(242,-10){\makebox(0,0){$v_L$}}
\put(318,-10){\makebox(0,0){$v_R$}}
\put(272,-10){\makebox(0,0){$\bar{v}_L$}}
\put(295,-10){\makebox(0,0){$\bar{v}_R$}}
\put(250,26){\makebox(0,0){$d$}} \put(313,26){\makebox(0,0){$d'$}}
\put(338,35) {\makebox(0,0){$d'_1$}}
\put(338,5){\makebox(0,0){$d'_{n'}$}} \put(225,35)
{\makebox(0,0){$d_1$}} \put(225,5){\makebox(0,0){$d_n$}}

\end{picture}

\noindent (The diagrams $\Gamma_L$ and $\Gamma_R$ correspond again
to dual graphs of certain  IHS normal surface singularities. If $\Gamma$ represents $M=S^3$, then both
$\Gamma_L$ and $\Gamma_R$ represent $S^3$, see (\ref{be:11}).)

\begin{bekezdes}\labelpar{be:f} {\bf Splicing $\Gamma(F)$.}
Next, we analyze the behavior of the multiplicity system
determined by a function $f$ or a $P$--divisor $F$. Let
$\calA:=\calA_F$ be the index set of arrowheads;
%(irreducible components of $F$);
it can be
written as a disjoint union $\calA_L\cup \calA_R$, according to
the position of the arrowheads. First, assume that both $\calA_L$
and $\calA_R$ are non--empty. If $a\in \calA_L$, then let
$\ell_{ea}$ be the product of the edge weights, all of them in
$\Gamma_L$, that are adjacent to, but not on, the path from $v_R$
to $a$. Symmetrically, one defines the integers $\ell_{ea}$ for
$a\in\calA_R$. Then $\Gamma(F)$  has the following splice
decomposition:

\begin{picture}(100,75)(5,-20)

\put(40,20){\circle*{4}} %\put(80,20){\circle*{4}}
\put(120,20){\circle*{4}} \put(40,20){\line(1,0){80}}
\put(40,20){\line(-2,-1){30}} \put(40,20){\line(-2, 1){30}}
\put(20,23){\makebox(0,0){$\vdots$}} \put(120,20){\line(2,-1){30}}
\put(120,20){\line(2, 1){30}}
\put(137,23){\makebox(0,0){$\vdots$}}
\put(42,-10){\makebox(0,0){$v_L$}}
\put(118,-10){\makebox(0,0){$v_R$}}
\put(55,26){\makebox(0,0){$d$}} \put(105,26){\makebox(0,0){$d'$}}
\put(138,35) {\makebox(0,0){$d'_1$}}
\put(138,5){\makebox(0,0){$d'_{n'}$}} \put(25,35)
{\makebox(0,0){$d_1$}} \put(25,5){\makebox(0,0){$d_n$}}
\put(80,-10){\makebox(0,0){$e$}}
\put(42,10){\makebox(0,0){\tiny{$(N)$}}}
\put(118,10){\makebox(0,0){\tiny{$(N')$}}}

\put(160,20){\vector(1,0){40}}
\put(163,25){\makebox(0,0)[l]{\tiny{splicing}}}

\put(240,20){\circle*{4}} %\put(270,20){\circle*{4}}\put(290,20){\circle*{4}}
\put(370,20){\circle*{4}}
\put(240,20){\vector(1,0){30}}\put(370,20){\vector(-1,0){30}}
\put(240,20){\line(-2,-1){30}} \put(240,20){\line(-2, 1){30}}
\put(220,23){\makebox(0,0){$\vdots$}}
\put(370,20){\line(2,-1){30}} \put(370,20){\line(2, 1){30}}
\put(387,23){\makebox(0,0){$\vdots$}}
\put(242,-10){\makebox(0,0){$v_L$}}
\put(368,-10){\makebox(0,0){$v_R$}}
\put(250,26){\makebox(0,0){$d$}} \put(363,26){\makebox(0,0){$d'$}}
\put(388,35) {\makebox(0,0){$d'_1$}}
\put(388,5){\makebox(0,0){$d'_{n'}$}} \put(225,35)
{\makebox(0,0){$d_1$}} \put(225,5){\makebox(0,0){$d_n$}}
\put(242,10){\makebox(0,0){\tiny{$(N)$}}}
\put(368,10){\makebox(0,0){\tiny{$(N')$}}}
\put(285,20){\makebox(0,0){\tiny{$(M)$}}}
\put(325,20){\makebox(0,0){\tiny{$(M')$}}}
\end{picture}

\noindent where (cf. \cite[(10.6)]{EN})
\begin{equation}\label{eq:mul}
M=\sum_{a\in\calA_R}\, N_a\ell_{ea}\ \ \ \ \mbox{and} \ \ \ \ M'=\sum_{a\in\calA_L}\, N_a\ell_{ea}.
\end{equation}

\noindent If all the arrowheads of $\Gamma$ are in one side, say
$\calA_L=\emptyset$, then one has the new situation

\begin{picture}(100,75)(5,-20)

\put(40,20){\circle*{4}} %\put(80,20){\circle*{4}}
\put(120,20){\circle*{4}} \put(40,20){\line(1,0){80}}
\put(40,20){\line(-2,-1){30}} \put(40,20){\line(-2, 1){30}}
\put(20,23){\makebox(0,0){$\vdots$}} \put(120,20){\line(2,-1){30}}
\put(120,20){\line(2, 1){30}}
\put(137,23){\makebox(0,0){$\vdots$}}
\put(42,-10){\makebox(0,0){$v_L$}}
\put(118,-10){\makebox(0,0){$v_R$}}
\put(55,26){\makebox(0,0){$d$}} \put(105,26){\makebox(0,0){$d'$}}
\put(138,35) {\makebox(0,0){$d'_1$}}
\put(138,5){\makebox(0,0){$d'_{n'}$}} \put(25,35)
{\makebox(0,0){$d_1$}} \put(25,5){\makebox(0,0){$d_n$}}
\put(80,-10){\makebox(0,0){$e$}}
\put(42,10){\makebox(0,0){\tiny{$(N)$}}}
\put(118,10){\makebox(0,0){\tiny{$(N')$}}}

\put(160,20){\vector(1,0){40}}
\put(163,25){\makebox(0,0)[l]{\tiny{splicing}}}

\put(240,20){\circle*{4}} \put(340,20){\circle*{4}} %\put(290,20){\circle*{4}}
\put(370,20){\circle*{4}}
\put(240,20){\vector(1,0){30}}\put(370,20){\line(-1,0){30}}
\put(240,20){\line(-2,-1){30}} \put(240,20){\line(-2, 1){30}}
\put(220,23){\makebox(0,0){$\vdots$}}
\put(370,20){\line(2,-1){30}} \put(370,20){\line(2, 1){30}}
\put(387,23){\makebox(0,0){$\vdots$}}
\put(242,-10){\makebox(0,0){$v_L$}}
\put(368,-10){\makebox(0,0){$v_R$}}
\put(250,26){\makebox(0,0){$d$}} \put(363,26){\makebox(0,0){$d'$}}
\put(388,35) {\makebox(0,0){$d'_1$}}
\put(388,5){\makebox(0,0){$d'_{n'}$}} \put(225,35)
{\makebox(0,0){$d_1$}} \put(225,5){\makebox(0,0){$d_n$}}
\put(242,10){\makebox(0,0){\tiny{$(N)$}}}
\put(368,10){\makebox(0,0){\tiny{$(N')$}}}
\put(285,20){\makebox(0,0){\tiny{$(M)$}}}
\put(325,20){\makebox(0,0){\tiny{$(M)$}}}
\end{picture}

\noindent where $M$ is computed by the same formula as in
(\ref{eq:mul}).\end{bekezdes}

 We denote the \lq
total\rq\ inherited divisors on $\Gamma_L$ and $\Gamma_R$ by $F_L$
and $F_R$, respectively. They can be identified with $P$--divisors
of the diagrams $\Gamma_L$ and $\Gamma_R$, respectively.
 \medskip
\begin{remark}\label{re:splicediv}
When $F=\div(f)$ is a plane curve germ, the left and right
graphs above correspond again to dual graphs associated to plane
curve germs. In particular, the arrowheads with multiplicities $M$ and
$M'$ correspond to components of these new germs.
%(2) In general however, even if $F$ is the divisor of a function
%germ $f$, these new arrowheads in $\Gamma_L$ and $\Gamma_R$
%correspond to (irreducible) Weil divisors, that are {\it not
%necessarily} divisors of (analytic) functions.
\end{remark}

\begin{bekezdes}\labelpar{be:fo} {\bf Splicing $\Gamma(F,W)$.}
Finally, let us analyze the behavior of the divisor of a 2--form
$\omega$ or, more generally, $K+W$ for some $P$--divisor $W$. Since the splicing
of $W$ is covered by the previous step (valid for any $P$--divisor), we have to understand
what happens to  $K$ only.

%The divisor of $\pi^*(\omega_0)$ is the {\it canonical divisor} of $\tilde{X}$,  usually denoted by $K$, and it is
%determined in the dual resolution graph $\Gx$ by the {\it adjunction relations:}
%\begin{equation}\label{eq:adj}
%(K+E,E_i)=\delta_i-2,
%\end{equation}
%where here $\delta_i$ is the valency of the vertex $i$ in $\Gx$.
%Let $L$ be the lattice $H_2(\tilde{X},\Z)$ with the intersection form $I(G)$. Then for each vertex $i$ of $G$
%one can define $E^*_i\in L$ with $(E^*_i,E_j)=-\delta_{ij}$ (the {\it negative } of the Kronecker delta), i.e. the (sign--modified) dual basis of $L$.
%(Since $\det(-I(G))=1$, they are well--defined.)
%Therefore, (\ref{eq:adj}) reads as
%\begin{equation}\labelpar{eq:adj2}
%$K+E=\sum_{i}\, (2-\delta_i)E^*_i$.
%\end{equation}
%Since all the valency 2 vertices of $G$ are irrelevant,  the
%relation descends naturally to the level of $\Gamma=\Gax$:
%\begin{equation}\label{eq:adj2}
%K+E=\sum_{v\in\calV(\Gamma)}\, (2-\delta_v)E^*_v.
%\end{equation}
%Since the divisor of $E^*_v$ is the same as the divisor along $E$ of the
%function represented by an arrowhead attached to $E_v$,  $E_v^*$ can be treated as a functions.

Let us consider the splicing of $\Gamma$ along $e$ as in
(\ref{be:d}). Any invariant associated with $\Gamma$ has its
analogue for $\Gamma_L$ and $\Gamma_R$. We wish to compare the
divisors of the pullbacks of the forms $\omega_{0,\Gamma}$ with
those of $\omega_{0,\Gamma_L}$ and $\omega_{0,\Gamma_R}$ --- if
they exist analytically; and, more generally  $K_\Gamma$ with
$K_{\Gamma_L}$ and $K_{\Gamma_R}$ (a combinatorial, always
well--posed question). For any $v\in\calV(\Gamma_L)$, we denote
the dual basis element computed in $\Gamma_L$ by $E^*_{v,\Gamma_L}$.
Moreover, we separate the vertices of $\Gamma_L$ inherited from
$\Gamma$: we set $\bar{\calV}(\Gamma_L):=\calV(\Gamma_L)\setminus
\{\bar{v}_L\}$, cf. the notation of (\ref{be:d}), and similarly
for $\Gamma_R$. Let us rewrite (\ref{eq:adj2}) into
\begin{equation}\label{eq:adj3}
(K+E)_\Gamma=\sum_{v\in\bar{\calV}(\Gamma_L)}\,
(2-\delta_v)E^*_v+\sum_{v\in\bar{\calV}(\Gamma_R)}\,
(2-\delta_v)E^*_v.
\end{equation}
Recall that $E^*_v$ behaves as a $P$--divisor associated with  one
arrowhead supported on $v$. So, by (\ref{eq:mult}), for any $v\in
\bar{\calV}(\Gamma_L)$ the restrictions satisfy
$E^*_v|_{\bar{\calV}(\Gamma_L)}=E^*_{v,\Gamma_L}|_{\bar{\calV}(\Gamma_L)}$
(i.e., the multiplicities along  $\bar{\calV}(\Gamma_L)$ agree).
Hence
\begin{equation*}%\label{eq:adj4}
(K+E)_\Gamma|_{\bar{\calV}(\Gamma_L)}=\Big(\
(K+E)_{\Gamma_L}-E^*_{\bar{v}_L,\Gamma_L}+
\sum_{v\in\bar{\calV}(\Gamma_R)}\, (2-\delta_v)E^*_v\
\Big)|_{\bar{\calV}(\Gamma_L)}.
\end{equation*}
Clearly, all multiplicities of both $E_\Gamma$ and $E_{\Gamma_L}$
along $\bar{\calV}(\Gamma_L)$ are one, hence they cancel:
\begin{equation}\label{eq:adj4}
K_\Gamma|_{\bar{\calV}(\Gamma_L)}=\Big(\ K_{\Gamma_L}-E^*_{\bar{v}_L,\Gamma_L}+
\sum_{v\in\bar{\calV}(\Gamma_R)}\, (2-\delta_v)E^*_v\ \Big)|_{\bar{\calV}(\Gamma_L)}.
\end{equation}
Using again (\ref{eq:mult}), the sum  can
be  replaced by  a $P$--divisor of $\Gamma_L$. Indeed,
set
\begin{equation}\label{eq:i}
i=i_{e,L}:=\sum_{v\in\bar{\calV}(\Gamma_R)}\, (2-\delta_v)\ell_{ev},
\end{equation}
where $\ell_{ev}$ (for any $v\in \bar{\calV}(\Gamma_R)$) is
the product of the edge weights of $\Gamma$, all of them in $\Gamma_R$,
that are adjacent to, but not on, the path  from $v_L$ to $v$.
Furthermore, let $G_L$ be the $P$--divisor on $\Gamma_L$
determined by one arrowhead with multiplicity one supported
on $\bar{v}_L$.
Then (\ref{eq:mult}) and (\ref{eq:adj4}) imply
\begin{equation}\label{eq:adj5}
K_\Gamma|_{\bar{\calV}(\Gamma_L)}= \left(K_{\Gamma_L} +
(i-1)G_L\right)|_{\bar{\calV}(\Gamma_L)}.
\end{equation}
If the forms above exist, and if $G_L$ is the pullback divisor of a
function $g_L$, then we have
 \begin{equation}\label{eq:adj6}
\mbox{div}(\pi^*\omega_{0,\Gamma})|_{\bar{\calV}(\Gamma_L)}=
\mbox{div}\left(\pi_L^*(g^{i-1}_L\cdot
\omega_{0,\Gamma_L})\right)|_{\bar{\calV}(\Gamma_L)}.
\end{equation}
Obviously, there is a symmetric identity for the restriction on
$\bar{\calV}(\Gamma_R)$. On diagrams we have

\begin{picture}(100,72)(5,-20)

\put(40,20){\circle*{4}} %\put(80,20){\circle*{4}}
\put(120,20){\circle*{4}}
\put(40,20){\line(1,0){80}}
\put(40,20){\line(-2,-1){30}}
\put(40,20){\line(-2, 1){30}}
\put(20,23){\makebox(0,0){$\vdots$}}
\put(120,20){\line(2,-1){30}}
\put(120,20){\line(2, 1){30}}
\put(137,23){\makebox(0,0){$\vdots$}}
%\put(42,-10){\makebox(0,0){$v_L$}}
%\put(118,-10){\makebox(0,0){$v_R$}}
\put(55,26){\makebox(0,0){$d$}} \put(105,26){\makebox(0,0){$d'$}}
\put(138,35) {\makebox(0,0){$d'_1$}}
\put(138,5){\makebox(0,0){$d'_{n'}$}} \put(25,35)
{\makebox(0,0){$d_1$}} \put(25,5){\makebox(0,0){$d_n$}}
%\put(80,-10){\makebox(0,0){$\omega_{0,\Gamma}$}}

%\put(0,20){\makebox(0,0){$(\dagger)$}}
%\put(118,10){\makebox(0,0){$(N')$}}

\put(160,20){\vector(1,0){40}}
\put(163,25){\makebox(0,0)[l]{\tiny{splicing}}}

\put(240,20){\circle*{4}} \put(270,20){\circle*{4}} \put(340,20){\circle*{4}}
\put(370,20){\circle*{4}}
\put(240,20){\line(1,0){30}}\put(370,20){\line(-1,0){30}}
\put(240,20){\line(-2,-1){30}}
\put(240,20){\line(-2, 1){30}}
\put(220,23){\makebox(0,0){$\vdots$}}
\put(370,20){\line(2,-1){30}}
\put(370,20){\line(2, 1){30}}
\put(387,23){\makebox(0,0){$\vdots$}}
%\put(242,-10){\makebox(0,0){$v_L$}}
%\put(368,-10){\makebox(0,0){$v_R$}}
\put(250,26){\makebox(0,0){$d$}} \put(363,26){\makebox(0,0){$d'$}}
\put(388,35) {\makebox(0,0){$d'_1$}}
\put(388,5){\makebox(0,0){$d'_{n'}$}} \put(225,35)
{\makebox(0,0){$d_1$}} \put(225,5){\makebox(0,0){$d_n$}}
%\put(242,10){\makebox(0,0){$(N)$}}
%\put(368,10){\makebox(0,0){$(N')$}}
\put(270,-8){\makebox(0,0){\tiny{$i-1$}}}
\put(340,-8){\makebox(0,0){\tiny{$i'-1$}}}

\dashline[3]{3}(270,20)(270,5)\put(270,5){\vector(0,-1){5}}
\dashline[3]{3}(340,20)(340,5)\put(340,5){\vector(0,-1){5}}

\end{picture}

\noindent When we incorporate also the divisor $W$, the equations (\ref{eq:i}) and (\ref{eq:adj5})
respectively extend to
\begin{equation}\label{eq:totali}
i:=\sum_{v\in\bar{\calV}(\Gamma_R)}\, (2-\delta_v)\ell_{ev}+
\sum_{a\in \calA_{W,R}} (i_a-1)\ell_{ea},
\end{equation}
where $\ell_{ea}$  is the product of the edge weights of $\Gamma$,
all of them in $\Gamma_R$,
 that are adjacent to, but not on, the path  from $v_L$ to the corresponding dashed arrow in $\Gamma_R$, and
\begin{equation}\label{eq:totaladj5}
(K_\Gamma+W)|_{\bar{\calV}(\Gamma_L)}= \left(K_{\Gamma_L}+
(i-1)G_L +W^+_L\right)|_{\bar{\calV}(\Gamma_L)},
\end{equation}
where $W^+_L$ is induced by $W$ on $\Gamma_L$, as in (\ref{be:f}).
%Remark \ref{re:splicediv}(2).
We will reserve the notation $W_L$ rather for $(i-1)G_L +W^+_L$,
in order to have the expression
\begin{equation}\label{eq:totaladj6}
(K_\Gamma+W)|_{\bar{\calV}(\Gamma_L)}= \left(K_{\Gamma_L}+
W_L\right)|_{\bar{\calV}(\Gamma_L)}.
\end{equation}

Note that (usually) the vertices $\bar{v}_L$ and $\bar{v}_R$ have
valency 2 (counting all the arrowheads), hence in the formulas
considered in the next sections they will be irrelevant.

In the presence of  a $P$--divisor $F$, with both
$\calA_{F,L}$ and $\calA_{F,R}$ non--empty, the above diagram modifies
into

\begin{picture}(100,72)(5,-20)

\put(40,20){\circle*{4}} %\put(80,20){\circle*{4}}
\put(120,20){\circle*{4}}
\put(40,20){\line(1,0){80}}
\put(40,20){\line(-2,-1){30}}
\put(40,20){\line(-2, 1){30}}
\put(20,23){\makebox(0,0){$\vdots$}}
\put(120,20){\line(2,-1){30}}
\put(120,20){\line(2, 1){30}}
\put(137,23){\makebox(0,0){$\vdots$}}
%\put(42,-10){\makebox(0,0){$v_L$}}
%\put(118,-10){\makebox(0,0){$v_R$}}
\put(55,26){\makebox(0,0){$d$}} \put(105,26){\makebox(0,0){$d'$}}
\put(138,35) {\makebox(0,0){$d'_1$}}
\put(138,5){\makebox(0,0){$d'_{n'}$}} \put(25,35)
{\makebox(0,0){$d_1$}} \put(25,5){\makebox(0,0){$d_n$}}
%\put(80,-10){\makebox(0,0){$\omega_{0,\Gamma}$}}
\put(42,10){\makebox(0,0){\tiny{$(N)$}}}
\put(118,10){\makebox(0,0){\tiny{$(N')$}}}

\put(160,20){\vector(1,0){40}}
\put(163,25){\makebox(0,0)[l]{\tiny{splicing}}}

\put(240,20){\circle*{4}} %\put(270,20){\circle*{4}} \put(340,20){\circle*{4}}
\put(370,20){\circle*{4}}
\put(240,20){\vector(1,0){30}}\put(370,20){\vector(-1,0){30}}
\put(240,20){\line(-2,-1){30}}
\put(240,20){\line(-2, 1){30}}
\put(220,23){\makebox(0,0){$\vdots$}}
\put(370,20){\line(2,-1){30}}
\put(370,20){\line(2, 1){30}}
\put(387,23){\makebox(0,0){$\vdots$}}
%\put(242,-10){\makebox(0,0){$v_L$}}
%\put(368,-10){\makebox(0,0){$v_R$}}
\put(250,28){\makebox(0,0){$d$}} \put(363,28){\makebox(0,0){$d'$}}
\put(388,35) {\makebox(0,0){$d'_1$}}
\put(388,5){\makebox(0,0){$d'_{n'}$}} \put(225,35)
{\makebox(0,0){$d_1$}} \put(225,5){\makebox(0,0){$d_n$}}
\put(242,10){\makebox(0,0){\tiny{$(N)$}}}
\put(368,10){\makebox(0,0){\tiny{$(N')$}}}
\put(288,23){\makebox(0,0){\tiny{$i-1$}}}
\put(324,23){\makebox(0,0){\tiny{$i'-1$}}}
\put(285,16){\makebox(0,0){\tiny{$(M)$}}}
\put(325,16){\makebox(0,0){\tiny{$(M')$}}}

\dashline[3]{3}(240,22)(265,22)\put(265,22){\vector(1,0){5}}
\dashline[3]{3}(370,22)(345,22)\put(345,22){\vector(-1,0){5}}

\end{picture}

\noindent where $M$ and $M'$ are determined as in (\ref{eq:mul}).
If $\calA_{F,L}=\emptyset$, then one has
%the diagram of $\Gamma_L$ is replaced by
%$\Gamma_L$ from the previous splice decomposition ($\dagger$);
%(compare with the two cases from (\ref{be:f}):

\begin{picture}(100,66)(5,-10)

\put(40,20){\circle*{4}} %\put(80,20){\circle*{4}}
\put(120,20){\circle*{4}}
\put(40,20){\line(1,0){80}}
\put(40,20){\line(-2,-1){30}}
\put(40,20){\line(-2, 1){30}}
\put(20,23){\makebox(0,0){$\vdots$}}
\put(120,20){\line(2,-1){30}}
\put(120,20){\line(2, 1){30}}
\put(137,23){\makebox(0,0){$\vdots$}}
%\put(42,-10){\makebox(0,0){$v_L$}}
%\put(118,-10){\makebox(0,0){$v_R$}}
\put(55,26){\makebox(0,0){$d$}} \put(105,26){\makebox(0,0){$d'$}}
\put(138,35) {\makebox(0,0){$d'_1$}}
\put(138,5){\makebox(0,0){$d'_{n'}$}} \put(25,35)
{\makebox(0,0){$d_1$}} \put(25,5){\makebox(0,0){$d_n$}}
%\put(80,-10){\makebox(0,0){$\omega_{0,\Gamma}$}}
\put(42,10){\makebox(0,0){\tiny{$(N)$}}}
\put(118,10){\makebox(0,0){\tiny{$(N')$}}}

\put(160,20){\vector(1,0){40}}
\put(163,25){\makebox(0,0)[l]{\tiny{splicing}}}

\put(240,20){\circle*{4}} %\put(340,20){\circle*{4}} \put(340,20){\circle*{4}}
\put(370,20){\circle*{4}}\put(340,20){\circle*{4}}
\put(240,20){\vector(1,0){30}}
\put(370,20){\line(-1,0){30}}
\put(240,20){\line(-2,-1){30}}
\put(240,20){\line(-2, 1){30}}
\put(220,23){\makebox(0,0){$\vdots$}}
\put(370,20){\line(2,-1){30}}
\put(370,20){\line(2, 1){30}}
\put(387,23){\makebox(0,0){$\vdots$}}
%\put(242,-10){\makebox(0,0){$v_L$}}
%\put(368,-10){\makebox(0,0){$v_R$}}
\put(250,28){\makebox(0,0){$d$}} \put(363,28){\makebox(0,0){$d'$}}
\put(388,35) {\makebox(0,0){$d'_1$}}
\put(388,5){\makebox(0,0){$d'_{n'}$}} \put(225,35)
{\makebox(0,0){$d_1$}} \put(225,5){\makebox(0,0){$d_n$}}
\put(242,10){\makebox(0,0){\tiny{$(N)$}}}
\put(368,10){\makebox(0,0){\tiny{$(N')$}}}
%\put(288,23){\makebox(0,0){\tiny{$i_{e,L}-1$}}}
\put(340,-5){\makebox(0,0){\tiny{$i'-1$}}}
\put(285,16){\makebox(0,0){\tiny{$(M)$}}}
\put(325,16){\makebox(0,0){\tiny{$(M)$}}}
\put(283,23){\makebox(0,0){\tiny{$i-1$}}}

\dashline[3]{3}(340,20)(340,5)\put(340,5){\vector(0,-1){5}}
\dashline[3]{3}(240,22)(265,22)\put(265,22){\vector(1,0){5}}

\end{picture}

\end{bekezdes}
\begin{remark}\labelpar{re:forms}
(1) Assume that we start with $W=0$. Then both $W^+_L$ and $W^+_R$
are zero, but in general $W_L$ and $W_R$ are not. Furthermore,
even  if $W$ is {\it effective}  (in particular, if $W=0$), the
induced divisors $W_L$ and/or $W_R$ are in general not effective.

In a different language: even
if we start with the `standard 2--from' $\omega_0$ (instead of the
more general $g\omega_0$) --- like in the traditional framework
of, say, topological or motivic zeta functions ---, once an
inductive splice--decomposition argument is used, we are forced to
enlarge the class of our forms: in (\ref{eq:adj6}) the standard
form decomposes in the splice component into a generalized form of
type $g\omega_0$. Also, one can see that even if we start with a
holomorphic form, the forms on the splice components, usually, are
not holomorphic (that is, they are meromorphic).

(2) Even if we have a precise analytic realization of a diagram $\Gamma$,
it is not clear what the relations are connecting  this analytic structure and the
eventual analytic realizations of $\Gamma_L$ and $\Gamma_R$.
In general, there is no analytic construction known by the authors
 which  would define a
natural analytic  structure  with topology $\Gamma_L$ starting from the original $(X,0)$.

In the presence of functions and forms the situation becomes even more difficult. In that case
it might happen that even if we know that $\Gamma(F,W)$ is analytically realized as $\Gamma_\pi(X,f,\omega)$,
after splicing the two combinatorial packages might not be realized analytically
(for example, the  `correction term' $(i-1)G_L$ is maybe not the divisor of a
function).

Nevertheless, if $\Gamma$ represents $S^3$, i.e. if any realization of $\Gamma$ is smooth,
then all the functions and (meromorphic) forms will exist.
\end{remark}
%\marginpar{???????????}

%\smallskip
%{\bf TO DO: discuss a concrete example throughout the previous part
%(that can be used later also when treating zeta functions). Must
%choose a good one.}
% \smallskip

\begin{example}\labelpar{ex:EXAMPLE} Consider the following diagram $\Gamma(F)$, compare also with
Remark~\ref{re:1234}(4).

\begin{picture}(400,70)(0,-10)

\put(100,25){\circle*{4}} \put(150,25){\circle*{4}}
\put(200,25){\circle*{4}} \put(250,25){\circle*{4}}
\put(300,25){\circle*{4}} \put(150,5){\circle*{4}}
\put(250,5){\circle*{4}} \put(100,25){\line(1,0){200}}
\put(150,25){\line(0,-1){20}} \put(200,25){\vector(0,-1){20}}
%\put(370,5){\circle*{4}}
\put(250,25){\line(0,-1){20}} \put(145,30){\makebox(0,0){\tiny{$2$}}}
\put(195,30){\makebox(0,0){\tiny{$1$}}}\put(155,30){\makebox(0,0){\tiny{$7$}}}
\put(245,30){\makebox(0,0){\tiny{$7$}}}\put(205,30){\makebox(0,0){\tiny{$1$}}}
%\put(375,35){\makebox(0,0){\tiny{$-13$}}}
%\put(375,-3){\makebox(0,0){\tiny{$(N)$}}}
\put(256,30){\makebox(0,0){\tiny{$2$}}}
%\put(425,35){\makebox(0,0){\tiny{$-2$}}}
\put(155,20){\makebox(0,0){\tiny{$3$}}}
\put(255,20){\makebox(0,0){\tiny{$3$}}}%\put(322,5){\makebox(0,0){$-7$}}

\put(100,18){\makebox(0,0){\tiny{$(3)$}}}
\put(145,18){\makebox(0,0){\tiny{$(6)$}}}
\put(206,18){\makebox(0,0){\tiny{$(1)$}}}
\put(245,18){\makebox(0,0){\tiny{$(6)$}}}
\put(300,18){\makebox(0,0){\tiny{$(3)$}}}
\put(150,-2){\makebox(0,0){\tiny{$(2)$}}}
\put(200,-2){\makebox(0,0){\tiny{$(1)$}}}
\put(250,-2){\makebox(0,0){\tiny{$(2)$}}}
\put(150,45){\makebox(0,0){\tiny{$v_1$}}}
\put(200,45){\makebox(0,0){\tiny{$v_0$}}}
\put(250,45){\makebox(0,0){\tiny{$v_1'$}}}

\end{picture}

We denote the nodes by $v_1,\ v_0$ and $v_1'$, and their
$\nu$--numbers by $\nu_1,\ \nu_0$ and $\nu_1'$. Then
$\nu_1=\nu_1'=-13$ and $\nu_0=-2$. Splicing the  diagram
$\Gamma(F,W=0)$ we get the three star--shaped subgraphs
 $\Gamma_1$, $\Gamma_0$ and $\Gamma_1'$:

\begin{picture}(400,70)(0,-10)

\put(0,25){\circle*{4}} \put(25,25){\circle*{4}}
\put(200,25){\circle*{4}} \put(375,25){\circle*{4}}
\put(400,25){\circle*{4}} \put(25,5){\circle*{4}}
\put(375,5){\circle*{4}} \put(0,25){\vector(1,0){75}}\put(400,25){\vector(-1,0){75}}
\put(25,25){\line(0,-1){20}} \put(200,25){\vector(0,-1){20}}
\put(375,25){\line(0,-1){20}} \put(20,30){\makebox(0,0){\tiny{$2$}}}
\put(195,30){\makebox(0,0){\tiny{$1$}}}\put(30,30){\makebox(0,0){\tiny{$7$}}}
\put(370,30){\makebox(0,0){\tiny{$7$}}}\put(205,30){\makebox(0,0){\tiny{$1$}}}
%\put(375,35){\makebox(0,0){\tiny{$-13$}}}
%\put(375,-3){\makebox(0,0){\tiny{$(N)$}}}
\put(380,30){\makebox(0,0){\tiny{$2$}}}
%\put(425,35){\makebox(0,0){\tiny{$-2$}}}
\put(30,20){\makebox(0,0){\tiny{$3$}}}
\put(380,20){\makebox(0,0){\tiny{$3$}}}%\put(322,5){\makebox(0,0){$-7$}}

\put(0,18){\makebox(0,0){\tiny{$(3)$}}}
\put(20,18){\makebox(0,0){\tiny{$(6)$}}}
\put(206,18){\makebox(0,0){\tiny{$(1)$}}}
\put(370,18){\makebox(0,0){\tiny{$(6)$}}}
\put(400,18){\makebox(0,0){\tiny{$(3)$}}}
\put(25,-2){\makebox(0,0){\tiny{$(2)$}}}
\put(200,-2){\makebox(0,0){\tiny{$(1)$}}}
\put(375,-2){\makebox(0,0){\tiny{$(2)$}}}

\put(25,45){\makebox(0,0){\tiny{$\nu_1=-13$}}}
\put(200,45){\makebox(0,0){\tiny{$\nu_0=-2$}}}
\put(375,45){\makebox(0,0){\tiny{$\nu_1'=-13$}}}

\dashline[3]{3}(25,27)(70,27)\put(70,27){\vector(1,0){5}}
\dashline[3]{3}(375,27)(330,27)\put(330,27){\vector(-1,0){5}}
\dashline[3]{3}(200,25)(155,25)\put(155,25){\vector(-1,0){5}}
\dashline[3]{3}(200,25)(245,25)\put(245,25){\vector(1,0){5}}

\put(85,20){\makebox(0,0){\tiny{$(1)$}}}
\put(85,30){\makebox(0,0){\tiny{$-2$}}}
\put(140,25){\makebox(0,0){\tiny{$-2$}}}
\put(260,25){\makebox(0,0){\tiny{$-2$}}}
\put(315,20){\makebox(0,0){\tiny{$(1)$}}}
\put(315,30){\makebox(0,0){\tiny{$-2$}}}
\end{picture}

\noindent The zeta function $Z(\Gamma;s)$ associated with
$\Gamma(F,W=0)$, cf. (\ref{I:zetadiagram}), is
$$Z(\Gamma;s)=2\frac{4}{6s-13}+\frac{1}{s-2}(-1+\frac{1}{s+1})+2\frac{1}{(s-2)(6s-13)}.$$

\end{example}

\subsection{Splicing the topological zeta function}\labelpar{ss:splicingzeta}
We will analyze the splicing behavior of  the
topological zeta function of a graph $\Gamma(F,W)$. Let us consider
again the splicing of the diagram $\Gamma(F,W)$ along the edge $e$ as
in (\ref{be:fo}), where we insert the relevant integers
 $(N,\nu-1)$ for both vertices $v_L$ and $v_R$:

\begin{picture}(100,75)(5,-20)

\put(40,20){\circle*{4}} %\put(80,20){\circle*{4}}
\put(120,20){\circle*{4}} \put(40,20){\line(1,0){80}}
\put(40,20){\line(-2,-1){30}} \put(40,20){\line(-2, 1){30}}
\put(20,23){\makebox(0,0){$\vdots$}} \put(120,20){\line(2,-1){30}}
\put(120,20){\line(2, 1){30}}
\put(137,23){\makebox(0,0){$\vdots$}}
%\put(42,-10){\makebox(0,0){$v_L$}}
%\put(118,-10){\makebox(0,0){$v_R$}}
\put(55,26){\makebox(0,0){$d$}} \put(105,26){\makebox(0,0){$d'$}}
\put(138,35) {\makebox(0,0){$d'_1$}}
\put(138,5){\makebox(0,0){$d'_{n'}$}} \put(25,35)
{\makebox(0,0){$d_1$}} \put(25,5){\makebox(0,0){$d_n$}}
\put(48,13){\makebox(0,0){\tiny{$(N)$}}}\put(50,6){\makebox(0,0){\tiny{$\nu-1$}}}
\put(110,13){\makebox(0,0){\tiny{$(N')$}}}
\put(106,6){\makebox(0,0){\tiny{$\nu'-1$}}}

\put(160,20){\vector(1,0){40}}
\put(163,25){\makebox(0,0)[l]{\tiny{splicing}}}

\put(240,20){\circle*{4}} %\put(270,20){\circle*{4}} \put(340,20){\circle*{4}}
\put(370,20){\circle*{4}}
\put(240,20){\vector(1,0){30}}\put(370,20){\vector(-1,0){30}}
\put(240,20){\line(-2,-1){30}} \put(240,20){\line(-2, 1){30}}
\put(220,23){\makebox(0,0){$\vdots$}}
\put(370,20){\line(2,-1){30}} \put(370,20){\line(2, 1){30}}
\put(387,23){\makebox(0,0){$\vdots$}}
%\put(242,-10){\makebox(0,0){$v_L$}}
%\put(368,-10){\makebox(0,0){$v_R$}}
\put(250,28){\makebox(0,0){$d$}} \put(363,28){\makebox(0,0){$d'$}}
\put(388,35) {\makebox(0,0){$d'_1$}}
\put(388,5){\makebox(0,0){$d'_{n'}$}} \put(225,35)
{\makebox(0,0){$d_1$}} \put(225,5){\makebox(0,0){$d_n$}}

\put(248,13){\makebox(0,0){\tiny{$(N)$}}}\put(250,6){\makebox(0,0){\tiny{$\nu-1$}}}
\put(360,13){\makebox(0,0){\tiny{$(N')$}}}
\put(356,6){\makebox(0,0){\tiny{$\nu'-1$}}}

\put(288,23){\makebox(0,0){\tiny{$i-1$}}}
\put(324,23){\makebox(0,0){\tiny{$i'-1$}}}
\put(285,16){\makebox(0,0){\tiny{$(M)$}}}
\put(325,16){\makebox(0,0){\tiny{$(M')$}}}

\dashline[3]{3}(240,22)(265,22)\put(265,22){\vector(1,0){5}}
\dashline[3]{3}(370,22)(345,22)\put(345,22){\vector(-1,0){5}}

\end{picture}

If $\calA_{F,L}=\emptyset$, then replace this diagram with the
adapted one as in the last diagram of (\ref{be:fo}). In this case
of $\calA_{F,L}=\emptyset$ one always has $M'=0$.

Note that, to be able to define $Z(\Gamma_L)$ and $Z(\Gamma_R)$,
we need that the condition $(N_a,i_a)\neq(0,0)$ for all arrowheads
is also valid after splicing, see (\ref{I:zetadiagram}). For the
moment we just assume this. In the context of allowed divisors we
will show in (\ref{lemma:nonzero}) that it is always true.

The contribution of $e$ to $Z(\Gamma)$ turns out to be the sum of
the contribution of the \lq right leg\rq\ of $\Gamma_L$ to
$Z(\Gamma_L)$ and the contribution of the \lq left leg\rq\ of
$\Gamma_R$ to $Z(\Gamma_R)$, minus an easy correction term, as
shown below. This then yields a simple splicing formula for
topological zeta functions. We start with the following numerical relation.

\begin{lemma}\labelpar{lemma:edge}
We use the notation of (\ref{be:fo}), as indicated on the diagram
above, and put also $q := dd'-(\prod_{j=1}^n d_j)(
\prod_{j=1}^{n'}d'_j)$ for the edge determinant of $e$. If
$\calA_{F,L}=\emptyset$,  set $M'=0$. Then we have the equality
\begin{equation}\label{eq:spliceedge}\begin{split}
\frac q{(\nu+sN)(\nu'+sN')}
= \frac d{(\nu+sN)(i+sM)} + \frac
{d'}{(\nu'+sN')(i'+sM')}\\ - \frac 1{(i+sM)(i'+sM')}\hspace{3.5cm}.
\end{split}\end{equation}
Moreover, if two of the pairs $(\nu,N)$, $(\nu',N')$, $(i,M)$ and
$(i',M')$ are linearly dependent, then any other choice of two
pairs are also linearly dependent.
\end{lemma}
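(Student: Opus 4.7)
The plan is to reduce (\ref{eq:spliceedge}) to a trivial identity by exhibiting the pairs $(i,M)$ and $(i',M')$ as explicit $\mathbb{Z}$-linear combinations of $(\nu,N)$ and $(\nu',N')$ with the same coefficients as those governing the splicing of the multiplicities. Specifically, I will establish
\[
q(i,M) = d(\nu',N') - \Bigl(\prod_{j'}d'_{j'}\Bigr)(\nu,N), \qquad q(i',M') = d'(\nu,N) - \Bigl(\prod_{j}d_j\Bigr)(\nu',N').
\]

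The second coordinate of each vector equation is a consequence of (\ref{eq:mul}) and the pullback formula (\ref{eq:mult}): from $N = dM' + (\prod_j d_j)M$ and $N' = d'M + (\prod_{j'} d'_{j'})M'$, obtained by comparing $\ell^\Gamma_{v_L a}$ with $\ell^\Gamma_{v_R a}$ for $a\in\calA_L$ and $a\in\calA_R$, inversion of this $2\times 2$ system (determinant $-q$) gives $qM = dN' - (\prod_{j'} d'_{j'})N$ and the symmetric formula. The first coordinate, involving the canonical/form data, requires more care. Apply (\ref{eq:nu}) at $v_R$ in $\Gamma$, splitting the sum into contributions from $\bar{\calV}(\Gamma_L)\cup\calA_{W,L}$ and from $\bar{\calV}(\Gamma_R)\cup\calA_{W,R}$; using $\ell^\Gamma_{v_R w}=d'\ell_{ew}$ for $w$ on the right, the second contribution becomes exactly $d'i$ by the definition (\ref{eq:totali}) of $i$. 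Next, apply (\ref{eq:nu}) in $\Gamma_L$ at $v_L$, which equals $\nu$ by (\ref{eq:totaladj6}); the two new objects in $\Gamma_L$, namely the boundary vertex $\bar{v}_L$ and the dashed arrow $G_L$ of multiplicity $i-1$, together contribute $i\prod_j d_j$. Eliminating the common sum $X_L$ between the resulting identities $\nu = X_L + i\prod_j d_j$ and $\nu' = d'i + (\prod_{j'} d'_{j'}/d)X_L$ yields $qi = d\nu' - (\prod_{j'} d'_{j'})\nu$, and the formula for $qi'$ follows by symmetry.

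With both coordinates in hand, set $A=\nu+sN$, $B=\nu'+sN'$, $C=i+sM$, $D=i'+sM'$. Combining the relations coordinate-wise yields
\[
A-dD = \Bigl(\prod_j d_j\Bigr)C \qquad\text{and}\qquad d'A-qD = \Bigl(\prod_j d_j\Bigr)B.
\]
Clearing denominators in (\ref{eq:spliceedge}) rewrites it as $AB+qCD = dBD + d'AC$, equivalently $B(A-dD) = C(d'A-qD)$; by the two displayed identities, both sides equal $(\prod_j d_j)BC$, proving the formula. For the second assertion, the vector equations above (together with their inverses, available since $q\neq 0$) show that $\mathrm{span}_{\mathbb{Q}}\{(\nu,N),(\nu',N')\} = \mathrm{span}_{\mathbb{Q}}\{(i,M),(i',M')\}$; if any two of the four pairs are linearly dependent, this common $\mathbb{Q}$-span has dimension at most $1$, and hence every pair is linearly dependent. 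The main obstacle is the derivation of $qi = d\nu' - (\prod_{j'} d'_{j'})\nu$: it requires a careful tracking of how the adjunction sum (\ref{eq:nu}) decomposes along the edge $e$, and the recognition that the bridge term $i\prod_j d_j$ connecting the $\Gamma$-expansion of $\nu'$ with the $\Gamma_L$-expansion of $\nu$ is precisely the combined contribution of the new vertex $\bar{v}_L$ and the new dashed arrow $G_L$ introduced by the splice.
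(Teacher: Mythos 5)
Your proof is correct, but it takes a more circuitous route than the paper's. The paper writes the \emph{forward} relations
\[
\nu = \Bigl(\prod_j d_j\Bigr) i + d\, i', \qquad
\nu' = \Bigl(\prod_{j'} d'_{j'}\Bigr) i' + d'\, i,
\]
and their analogues for $(N,M,M')$, as a one-line consequence of (\ref{eq:mult}), (\ref{eq:mul}), (\ref{eq:nu}), (\ref{eq:totali}): one simply splits the defining sum for $\nu_{v_L}$ (resp.\ $\nu_{v_R}$) into the $\Gamma_L$- and $\Gamma_R$-parts and observes that $\ell_{v_L w}= (\prod_j d_j)\,\ell_{ew}$ for $w$ on the right and $\ell_{v_L w}=d\,\ell_{ew}$ for $w$ on the left, identifying the two halves directly with $(\prod_j d_j)\,i$ and $d\,i'$. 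This never invokes (\ref{eq:totaladj6}). The paper then writes these relations in the form (\ref{eq:s}), multiplies the left- and right-hand sides, and reads off (\ref{eq:spliceedge}) with no further rearrangement. You instead derive the \emph{inverse} relations $q(i,M)=d(\nu',N')-(\prod_{j'}d'_{j'})(\nu,N)$, obtaining the first coordinate by pairing a $\Gamma_L$-side computation of $\nu$ (via (\ref{eq:totaladj6}), i.e., the $K$-splice formula of Section~3.1) with a $v_R$-side split and then eliminating the common left sum $X_L$. This is correct, and the subsequent algebra --- reorganizing the cleared-denominator equation as $B(A-dD)=C(d'A-qD)$ and showing both sides equal $(\prod_j d_j)BC$ --- is a valid alternative to the paper's multiplication of (\ref{eq:s}). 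The two approaches buy the same thing; yours uses heavier machinery (the divisor identity (\ref{eq:totaladj6})) where a direct $\ell$-factoring suffices, and it would be streamlined by noting that your $X_L$ equals $d\,i'$, which recovers the paper's forward relation immediately. One small point on the linear-dependency claim: equality of the two $\mathbb{Q}$-spans alone does not immediately give dimension $\leq 1$ when the dependent pair is a \emph{mixed} one such as $(\nu,N)$ and $(i,M)$; you still need to use one of the explicit relations (say $(\nu,N)=(\prod_j d_j)(i,M)+d(i',M')$) to drag $(i',M')$ into the same line. That gap is easy to fill with the equations you already have, but it should be spelled out.
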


\begin{proof} The equations (\ref{eq:mult}) and
(\ref{eq:mul}), respectively (\ref{eq:nu}) and (\ref{eq:totali}),
imply
 $$\begin{cases} N= (\prod_{j=1}^n d_j)M+dM'
\\ N'=(\prod_{j=1}^{n'}d'_j)M'+d'M
\end{cases} \text{and}\quad \begin{cases} \nu= (\prod_{j=1}^n d_j)i+di' \\ \nu'=
(\prod_{j=1}^{n'} d_j)i'+d'i.
\end{cases}$$
 Hence (as polynomials in $s$)
\begin{equation}\label{eq:s}
\begin{cases}
(\prod_{j=1}^n d_j)(i+sM) = \nu+sN - d(i'+sM') \\
(\prod_{j=1}^{n'} d'_j)(i'+sM') = \nu'+sN' - d'(i+sM).
\end{cases}
\end{equation}
Multiplying the left and right hand sides of (\ref{eq:s}), and
using the defining formula of $q$, we obtain
$$
(\nu+sN)(\nu'+sN') - d{(\nu'+sN')(i'+sM')} - d'{(\nu+sN)(i+sM)}
+q(i+sM)(i'+sM')=0.
$$
This is clearly equivalent to (\ref{eq:spliceedge}). The linear
dependency statements follow easily from (\ref{eq:s}), using that
$q\neq 0$.
\end{proof}

One of the main new results of the article is the next splice decomposition
formula for $Z(\Gamma)$.

\begin{theorem}\labelpar{prop:splicezeta}
(1) Consider the splicing of the diagram $\Gamma$ as in the last
diagram of  (\ref{be:fo}). Again, if $\calA_{F,L}=\emptyset$,  set $M'=0$. Then
$$
\quad Z(\Gamma)=Z(\Gamma_L)+Z(\Gamma_R)-
 \frac 1{(i+sM)(i'+sM')}.
$$
(2) The contribution of $v_L$ in $Z(\Gamma)$ has $-\frac{\nu}N$ as
a pole of order $2$   if and only if the contribution of $v_L$ in
$Z(\Gamma_L)$ has $-\frac{\nu}N$ as a pole  of order $2$.

\noindent (3) Suppose that $-\frac{\nu}N$ is not a pole of order $2$
of $Z(\Gamma)$. Then the contributions of $v_L$ to the residue of
$Z(\Gamma)$ and to the residue of $Z(\Gamma_L)$ at $-\frac{\nu}N$
are exactly the same.
\end{theorem}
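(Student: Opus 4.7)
My plan is to compare $Z(\Gamma)$ with $Z(\Gamma_L)+Z(\Gamma_R)$ summand by summand, and reduce the residual discrepancy at the spliced edge $e$ to the algebraic identity of Lemma~\ref{lemma:edge}.

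For part (1), every node $v\in\calN(\Gamma)\setminus\{v_L,v_R\}$ lies in exactly one of $\Gamma_L,\Gamma_R$, with its local data---valency $\delta_v''$, incident weights, adjacent arrowheads and boundary vertices together with their decorations $(N_a,i_a)$ and $i_w$---intact, so its contribution to the zeta function is unchanged. The same is true for every special edge other than $e$. The only real differences are: the special-edge term $\frac{q_e}{(\nu+sN)(\nu'+sN')}$ of $Z(\Gamma)$ disappears, while $v_L$ in $\Gamma_L$ acquires one new incident arrowhead of weight $d$ with combined decoration $(N_a,i_a)=(M,i)$---contributing $\frac{d}{(\nu+sN)(i+sM)}$---and symmetrically $v_R$ in $\Gamma_R$ acquires $\frac{d'}{(\nu'+sN')(i'+sM')}$. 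The valencies $\delta_{v_L}''$ and $\delta_{v_R}''$ stay put since an edge is simply replaced by an arrowhead. Summing the three discrepancies,
\[
Z(\Gamma_L)+Z(\Gamma_R)-Z(\Gamma)=\frac{d}{(\nu+sN)(i+sM)}+\frac{d'}{(\nu'+sN')(i'+sM')}-\frac{q_e}{(\nu+sN)(\nu'+sN')},
\]
and Lemma~\ref{lemma:edge} identifies the right-hand side with $\frac{1}{(i+sM)(i'+sM')}$, giving (1).

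Parts (2) and (3) both come from a careful inspection of the $v_L$-contribution to the respective zeta functions---the sum of the $v_L$-node term and the incident special-edge terms---which has the shape $\frac{1}{\nu+sN}$ times a bracket whose only candidate poles at $s=-\nu/N$ arise from summands $\frac{d_{v_L a}}{i_a+sN_a}$ with $(i_a,N_a)$ linearly dependent on $(\nu,N)$, or from summands $\frac{q_{e'}}{\nu_{e'}+sN_{e'}}$ at a special edge $e'$ incident to $v_L$ whose far endpoint has $(\nu_{e'},N_{e'})$ linearly dependent on $(\nu,N)$. Moving from $\Gamma$ to $\Gamma_L$ only replaces the summand $\frac{q_e}{\nu'+sN'}$ by the new $\frac{d}{i+sM}$ and leaves the rest of the bracket unchanged. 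By the linear-dependency assertion of Lemma~\ref{lemma:edge}, $(\nu',N')$ is linearly dependent on $(\nu,N)$ if and only if $(i,M)$ is, which proves (2). For (3), when $-\nu/N$ is not a pole of order $2$, the denominators $\nu'+sN'$ and $i+sM$ do not vanish at $s=-\nu/N$, so the residue contribution of $v_L$ equals $\frac{1}{N}$ times the bracket evaluated at that point; multiplying the identity of Lemma~\ref{lemma:edge} through by $(\nu+sN)$ and setting $s=-\nu/N$ annihilates the two terms that still contain $\nu+sN$, leaving exactly $\frac{q_e}{\nu'+sN'}=\frac{d}{i+sM}$ at $s=-\nu/N$. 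Hence the brackets for $\Gamma$ and $\Gamma_L$ take the same value at $-\nu/N$, and the two residue contributions coincide.

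The main obstacle is the bookkeeping in part (1): one has to verify that the newly created arrowhead at $v_L$ (combining the $F$-multiplicity $M$ with the $W$-multiplicity $i-1$) indeed contributes the single fraction $\frac{d}{i+sM}$ to $Z(\Gamma_L)$, and to handle uniformly the degenerate case $\calA_{F,L}=\emptyset$, in which $M'=0$ and the new object on the $\Gamma_R$-side is a boundary vertex carrying only a dashed arrowhead of decoration $i'-1$---whose boundary-sum contribution $\frac{d'}{i'}=\frac{d'}{i'+s\cdot 0}$ still fits the uniform formula. Once these conventions are in place, parts (1)--(3) are essentially applications of the algebraic identity of Lemma~\ref{lemma:edge}.
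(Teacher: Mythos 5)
Your proof takes essentially the same route as the paper's: part (1) by explicit summand-by-summand bookkeeping reducing to the algebraic identity of Lemma~\ref{lemma:edge}, and part (3) by extracting the residue difference $\frac{1}{N}\bigl(\frac{q_e}{\nu'+sN'}-\frac{d}{i+sM}\bigr)\big|_{s=-\nu/N}$ and annihilating it via that same identity. Those two parts are complete.

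In part (2), however, there is a small but genuine gap. You correctly identify that the $v_L$-contribution has the shape $\frac{1}{\nu+sN}$ times a bracket, and that the candidate sources of a pole of the bracket at $s=-\nu/N$ are the summands $\frac{d_{v_La}}{i_a+sN_a}$ and $\frac{q_{e'}}{\nu_{e'}+sN_{e'}}$ whose denominators are proportional to $\nu+sN$; and you correctly observe that passing from $\Gamma$ to $\Gamma_L$ only swaps $\frac{q_e}{\nu'+sN'}$ for $\frac{d}{i+sM}$, with linear dependence on $(\nu,N)$ preserved under the swap by Lemma~\ref{lemma:edge}. But this shows only that the \emph{candidate} double pole is preserved, not that the \emph{actual} double pole is: a priori, the residues of several proportional summands in the bracket could cancel on one side of the swap and not on the other, so that, say, $Z(\Gamma)$ has no double pole while $Z(\Gamma_L)$ does. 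To close this, one must observe (as the paper does) that every coefficient of a $1/(\nu+sN)^2$ term is positive --- because $d_{va}>0$, $q_{e'}>0$, and the proportionality constants $N_a/N$, $N_{e'}/N$, $N'/N$, $M/N$ are all positive since $F$ is a nonzero effective divisor --- so that no cancelation among these contributions can occur. With that one-line observation inserted, your argument for part (2) becomes complete and matches the paper's.
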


\begin{proof}
(1) This is a direct consequence of (\ref{eq:spliceedge}), since
the other contributions to $Z(\Gamma)$ appear in a disjoint way as
the other contributions to $Z(\Gamma_L)$ or $Z(\Gamma_R)$.

(2) First note that the coefficient of any expression
$1/(\nu+sN)^2$ in the topological zeta function formula is always
positive, hence
 there are no cancelations among them. Consequently, the statement follows
immediately  from the linear dependency considerations in Lemma
\ref{lemma:edge}.

(3) The difference between the contributions to both residues is
$$
\frac1N \left( \frac q{\nu'+sN'} - \frac d{i+sM} \right),
$$
evaluated in $s=-\frac{\nu}N$, and this is zero because of
(\ref{eq:spliceedge}).
\end{proof}

\begin{example}\labelpar{ex:EXAMPLE2}
With the notations of Example~\ref{ex:EXAMPLE} one has
$$Z(\Gamma_1;s)=Z(\Gamma'_1;s)=\frac{1}{6s-13}\Big(4+\frac{7}{s-1}\Big), \ \
Z(\Gamma_0;s)=\frac{1}{s-2}\Big(-3+\frac{1}{s+1}\Big). $$

\noindent One verifies that indeed, according to Proposition
\ref{prop:splicezeta}(1),
$$Z(\Gamma;s)=Z(\Gamma_1;s)+Z(\Gamma_0;s)+Z(\Gamma'_1;s)-2\frac{1}{(-1)(s-1)},$$
and in the sum $Z(\Gamma;s)$ the pole $s=1$ `disappears'.
\end{example}

\subsection{Splicing the monodromy zeta function and Alexander
polynomial}\labelpar{ss:splicingalex} Let $f:(X,0)\to  (\C,0)$ be
the germ of a holomorphic function as in (\ref{ss:2}), let $F_0$
be its Milnor fiber, $h_i:H_i(F_0,\C)\to H_i(F_0,\C)$ the
algebraic monodromy  ($i=0,1$), $\Delta_i(t):=\det(tI-h_i)$ the
characteristic polynomial of $h_i$, and finally,
$\zeta(t)=\Delta_1/\Delta_0$ the monodromy zeta function
associated with $f$ at $0$.

It is well--known, cf. \cite[(11.3)]{EN}, that the zeta function
can be computed from the splice diagram $\Gamma=\Gaxf$ as follows:
\begin{equation}\label{eq:zeta}
\zeta(t)=\prod_{v\in\calV(\Gamma)}\, (t^{N_v}-1)^{\delta_v'-2},
\end{equation}
where, for each vertex $v$, $N_v$ denotes its multiplicity and
$\delta _v'$ its valency in $\Gaxf$. The zeta--function is
`almost' multiplicative with respect to the splice decomposition.
In order to have a uniform statement, we consider the {\it
Alexander polynomial} (in one variable),  cf. \cite[(12.1)]{EN},
as follows:
\begin{equation}\label{eq:alex}
\Lambda(t):=\left\{\begin{array}{ll}
\zeta(t) & \ \mbox{if $\#\calA(\Gaxf)\geq 2$,}\\
\Delta_1(t) & \ \mbox{if $\#\calA(\Gaxf)= 1$}.
\end{array}\right.
\end{equation}

The formula (\ref{eq:zeta}) provides $\Lambda(t)$ too, since, if
$\#\calA=1$, then $\Lambda=\zeta\cdot \Delta_0$, and
$\Delta_0(t)=t^{N_a}-1$, where $N_a$ is the multiplicity of the
unique arrowhead $a$. (In general, $F_0$ has $d$ connected
components, hence $\Delta_0(t)=t^d-1$, where $d:=\mbox{gcd}_{a\in
\calA}(N_a)$.) In particular, $\Lambda (t)$ can be recovered from
the diagram $\Gamma=\Gaxf$; let us write $\Lambda_\Gamma(t)$ for
this expression.

Clearly, $\Lambda_\Gamma(t)$ depends only on the divisor of $f$,
hence the above formula defines $\Lambda_\Gamma(t)$ for any
$P$--divisor $F$ and $\Gamma(F)$. Moreover, assume that the splice
diagram $\Gamma(F)$ has the splice decomposition $\Gamma_L$ and
$\Gamma_R$ as in (\ref{be:f}), without considering or asking any
analytic realization. Then, analyzing the splice decompositions of
(\ref{be:f}) and the formula (\ref{eq:zeta}), we easily  get the
following.

\begin{proposition}\labelpar{prop:alex}
$$\Lambda_{\Gamma}(t)=\Lambda_{\Gamma_L}(t)\cdot \Lambda_{\Gamma_R}(t).$$
\end{proposition}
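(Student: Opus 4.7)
The plan is to expand both sides via the product formula (\ref{eq:zeta}) and the case definition (\ref{eq:alex}), and then match factors vertex by vertex. The key observation is that the splicing procedure of (\ref{be:f}) preserves the two pieces of data entering (\ref{eq:zeta}) at every vertex inherited from $\Gamma$: the multiplicity $N_v$ is unchanged (this is precisely what the definition of $M,M'$ in (\ref{eq:mul}) is tailored to achieve, so that (\ref{eq:mult}) remains consistent across the decomposition), and the valency $\delta_v'$ is unchanged as well --- at $v_L$ (respectively $v_R$) the single edge $e$ leading to the other splice component is replaced by a single new edge to either a new arrowhead or a new boundary vertex, while every other vertex of $\calV(\Gamma)$ retains its entire neighborhood.

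First I would treat the \emph{generic} case, where both $\calA_L$ and $\calA_R$ are non--empty. Here the splicing introduces only new arrowheads (at $v_L$ and $v_R$) and no new elements of $\calV$, so $\calV(\Gamma)=\calV(\Gamma_L)\sqcup\calV(\Gamma_R)$. Applying (\ref{eq:zeta}) on each side gives $\zeta_{\Gamma_L}\cdot\zeta_{\Gamma_R}=\zeta_\Gamma$ termwise. Since $\#\calA(\Gamma)$, $\#\calA(\Gamma_L)$ and $\#\calA(\Gamma_R)$ are each at least $2$, (\ref{eq:alex}) yields $\Lambda=\zeta$ in all three diagrams, and the claim follows.

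Next I would handle the asymmetric case $\calA_L=\emptyset$ (the opposite case being symmetric). Now on the $\Gamma_L$ side the new appendage at $v_L$ is a single arrowhead of multiplicity $M$, while on the $\Gamma_R$ side the new appendage at $v_R$ is a boundary vertex $\bar v_R$ of multiplicity $M$ and valency $\delta_{\bar v_R}'=1$. Hence $\bar v_R$ is the only vertex of $\calV(\Gamma_L)\sqcup\calV(\Gamma_R)$ not already present in $\calV(\Gamma)$, and it contributes a factor $(t^M-1)^{-1}$ to $\zeta_{\Gamma_R}$, so that
$$
\zeta_{\Gamma_L}\,\zeta_{\Gamma_R}=\zeta_\Gamma\cdot(t^M-1)^{-1}.
$$
On the $\Lambda$--side, $\Gamma_L$ has exactly one arrowhead, so (\ref{eq:alex}) introduces an extra factor $t^M-1$ into $\Lambda_{\Gamma_L}$ which cancels the above $(t^M-1)^{-1}$; meanwhile $\Gamma$ and $\Gamma_R$ have the same set of arrowheads (namely $\calA_R$), so their $\Lambda$--versus--$\zeta$ corrections are identical --- either none if $\#\calA_R\geq 2$, or a common factor $t^{N_a}-1$ for the unique arrowhead $a$ if $\#\calA_R=1$. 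Combining these observations gives $\Lambda_{\Gamma_L}\cdot\Lambda_{\Gamma_R}=\Lambda_\Gamma$.

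The proof is entirely elementary once the bookkeeping is set up. The only point requiring genuine verification --- and hence the main obstacle, such as it is --- is the invariance of $N_v$ and $\delta_v'$ at vertices inherited from $\Gamma$; this is precisely the purpose for which formula (\ref{eq:mul}) was designed, and is the reason the whole splice formalism of Eisenbud--Neumann exists. No new ideas beyond the splice identities recorded in (\ref{ss:splicingdiagrams}) are needed.
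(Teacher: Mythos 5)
Your proof is correct and fills in exactly the routine bookkeeping that the paper compresses into the single sentence ``analyzing the splice decompositions of (\ref{be:f}) and the formula (\ref{eq:zeta}), we easily get the following.'' In particular, you have handled the only point that requires care: when $\calA_L=\emptyset$ the new boundary vertex $\bar v_R$ contributes $(t^M-1)^{-1}$ to $\zeta_{\Gamma_R}$, which is cancelled by the $\Delta_0$-correction $(t^M-1)$ that appears in $\Lambda_{\Gamma_L}$ because $\Gamma_L$ now has a unique arrowhead; and the residual $\zeta$-versus-$\Lambda$ correction on $\Gamma$ and $\Gamma_R$ coincides since they have the identical arrowhead set $\calA_R$. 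This is precisely the paper's intended argument, so there is nothing further to add.
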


\smallskip
\begin{example}\labelpar{ex:EXAMPLE3}
With the notation of Example~\ref{ex:EXAMPLE} one has
$\Lambda_{\Gamma_1}(t)=\Lambda_{\Gamma'_1}(t)=t^2-t+1$,
$\Lambda_{\Gamma_0}(t)=1$, and their product is indeed
$\Lambda(t)=\Delta_1(t)=(t^2-t+1)^2.$
\end{example}

The advantage (at least in the present paper) of $\Lambda$
compared with $\zeta$ is that $\Lambda $ is a {\it polynomial},
hence we do not have to deal with possible cancelations of the
roots and poles in the multiplicative formula of Proposition
\ref{prop:alex}.

\section{Allowed forms/$P$--divisors}\labelpar{s:3}

\subsection{Definition}\labelpar{ss:defallowed}
In the original setting of a plane curve $f$ we want to pin down a
class of 2-forms $\omega$ such that we can realize the goals of
(\ref{goal:in}) from the introduction. More generally, starting with an effective
divisor $F$ on a IHS germ $(X,0)$, we look for an appropriate
class of Weil divisors $W$.

From the point of view of splicing, our definition of allowed
forms/divisors below is quite natural. It is not difficult to
identify a natural class of divisors $W$ that do the job on a \lq
basic building block\rq, i.e. a star-shaped graph. We use this as
guideline to identify our allowed divisors on a general graph,
just demanding that we obtain allowed divisors on all star-shaped
subgraphs after (repeated) splicing.

Again, the restriction is combinatorial, depending only on the
splicing graph; hence, we will treat {\it allowed $P$--divisors $W$ of graphs
$\Gamma(F)$}.

\begin{definition}\labelpar{def:alloweddiagram}
Let $\Gamma=\Gamma(F)$ be a diagram as in (\ref{ss:2}), see also
(\ref{be:notation}). The set of decorated dashed arrows
$\sum_{a\in\calA_W}(i_a-1)W_a$, i.e., the associated $P$--divisor
$W$, is {\it allowed} for $\Gamma$ (or, the diagram $\Gamma(F,W)$
is allowed), if the following conditions are satisfied:
\begin{enumerate}\label{allow}

\item\label{allow.1}
$i_a\neq 0$ for $a\in\calA_W\setminus \calA_F$,
that is, $(N_a,i_a)\neq(0,0)$ for all $a\in\calA_F\cup \calA_W$.
\item \label{allow.2}
Suppose that $\Gamma$ is star-shaped. Let the central node be
connected to $n$ boundary vertices whose supporting edges have
decorations $\{d_\ell\}_{\ell=1}^n$, and with $r $ other
incident edges connecting with  arrowheads, doubled by dashed arrows or not ($r\geq 1$ always).

\begin{picture}(310,70)(0,10)

\put(150,50){\circle*{4}} \put(180,65){\circle*{4}} \put(180,35){\circle*{4}}
\put(150,50){\line(2,1){30}}\put(150,50){\line(2,-1){30}}\put(150,50){\vector(-2,-1){30}}
\put(150,50){\vector(-2,1){30}}
\dashline[3]{3}(180,65)(195,65)\put(195,65){\vector(1,0){5}}
\dashline[3]{3}(180,35)(195,35)\put(195,35){\vector(1,0){5}}
\put(170,52){\makebox(0,0){$\vdots$}}
\put(130,52){\makebox(0,0){$\vdots$}}
\put(160,60){\makebox(0,0){\tiny{$d_1$}}}\put(160,40){\makebox(0,0){\tiny{$d_n$}}}
\put(215,65){\makebox(0,0){\tiny{$i_1-1$}}}
\put(215,35){\makebox(0,0){\tiny{$i_n-1$}}}

\put(90,20){\makebox(0,0){\tiny{$r$ arrowheads}}}
\put(100,10){\makebox(0,0){\tiny{which might be doublearrows}}}

\put(230,20){\makebox(0,0){\tiny{$n$ boundary vertices}}}

\end{picture}

\vspace{3mm}

\noindent
Then the decorations $i_1-1,\dots,i_n-1$ of the
dashed arrows at these boundary vertices are subject to the
following restrictions provided that $r=1$ or $r=2$.
\begin{itemize}
\item
$\mathbf  {r=1:}$
if $d_\ell | i_\ell$ for at least $n-1$ indexes
 $\ell\in \{1,\ldots,n\}$, then $i_\ell=d_\ell$ for at least $n-1$
 indexes $\ell\in \{1,\ldots,n\}$;
 % (maybe not the same ones);
\item
$\mathbf  {r=2:}$
 if $d_\ell | i_\ell$ for all  $1\leq \ell\leq n$,
then $i_\ell=d_\ell$ for all these indexes $\ell$.
\end{itemize}
\item\label{allow.3}
For arbitrary $\Gamma$  we require that the induced decorations
on each  star-shaped subdiagram of $\Gamma$, obtained
after repeated splicing as in (\ref{be:fo}), satisfy the
restrictions (\ref{allow.2}).
\end{enumerate}
\end{definition}

\begin{remark}\labelpar{rem:allowed}
(a) In (\ref{allow.2}) there are thus no conditions on the decorations $i_a$
for the arrowheads given by $a\in \calA_F$, that is, those
associated to the other $r$ edges.

(b) The value $i_\ell=1$ is possible. It corresponds to no dashed
arrow, or formally to a dashed arrow with decoration zero. Also,
for the boundary vertices from the right, if $d_\ell=1$ for some $\ell$,
and the corresponding leg is not represented as above, but with its
minimal diagram as in (\ref{ss:3}), then the above definition applies
for these dashed arrowheads too (with $d_\ell=1$).

(c) One can formulate the restrictions in (\ref{allow.2}) simultaneously for
all $r$ as follows:
\begin{quote}
\noindent {\it if \, $d_\ell | i_\ell$ \, for at least $n+r-2$ indexes
$\ell\in\{1,\ldots,n\}$, then $i_\ell=d_\ell$ for at least $n+r-2$ of the
indexes $\ell$.} (This assumption is empty if  $r\geq 3$.)
\end{quote}

(d) We assumed implicitly in (\ref{allow.2}) that $n\geq 1$. When $n=0$ the
conditions are empty.

(e) A priori it is not clear at all that there exist allowed $W$ on a
general graph $\Gamma(F)$. We will construct plenty of them later.
\end{remark}

\begin{definition}\labelpar{def:allowedform}
Let $(X,0)$ be an IHS surface germ, and $F'$ a (non-zero) effective
Weil divisor on it. A Weil divisor $W'$ of $(X,0)$  is {\it allowed} for the
pair $(X,F')$ if there exists an embedded resolution
$\pi:\tilde{X}\to X$ of $F'$ such that the diagram $\GaxFW$ is
allowed.
\end{definition}

This notion is well defined, in the sense that it is invariant
under \lq extra\rq\ blowing-ups, as shown below.

\begin{proposition}\labelpar{prop:blowup}
We use the notation from (\ref{def:allowedform}). Suppose that the
diagram $\Gamma:=\GaxFW$ is allowed. Let
$h:\tilde{X_1}\to\tilde{X}$ be a blowing-up in some point of
$\tilde{X}$. Then the diagram $\Gamma_1:=\Gamma_{\pi\circ
h}(X,F_1,W_1)$ (with obvious notations) is also allowed.
\end{proposition}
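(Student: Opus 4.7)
I plan to argue by case analysis on the blowup center $p \in \tilde X$, verifying that the splice diagram $\Gamma_1$ is equivalent to $\Gamma$ under splice calculus, so that the allowedness condition (which is purely combinatorial on the splice diagram) is preserved. The underlying principle is that the splice diagram is a topological invariant of the link pair: a blowup preserves $|\det I(G)|$, and hence all edge weights in $\Gamma$, as well as the multiplicities $N_v$ and $\nu_v$ along strict transforms of exceptional components (the relevant identity being $K_{\pi\circ h}+W_1=h^*(K_\pi+W)+E_0$, together with (\ref{eq:adj}) and (\ref{eq:nu})). The worst $\Gamma_1$ can differ from $\Gamma$ by is the addition of weight-$1$ boundary legs with trivial decoration, and I must check that this does not disturb allowedness.

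\textbf{Case analysis.} If $p\notin\pi^{-1}(0)$, then the new exceptional $E_0=h^{-1}(p)$ is not contained in $(\pi\circ h)^{-1}(0)$ and $\Gamma_1=\Gamma$, so there is nothing to prove. Assume $p\in\pi^{-1}(0)$. By the SNC property of $\pi$, either (a) $p$ is the transverse intersection of two components of $\pi^{-1}(0)\cup\supp(F)\cup\supp(W)$, or (b) $p$ is a smooth point of a unique exceptional $E_i$, off all strict transforms. In case (a), $E_0$ appears as a valency-$2$ vertex in $G_1$ and is collapsed in the splice diagram; if the blowup relocates a (possibly dashed) arrowhead onto $E_0$, that arrowhead is then re-absorbed by the ``arrow-replaces-boundary'' convention of Section~\ref{ss:2}. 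A determinant computation shows that all edge weights agree with those in $\Gamma$, so $\Gamma_1=\Gamma$. In case (b), $E_0$ is a new valency-$1$ boundary of $G_1$ attached to $E_i$: if $E_i$ had valency $1$ in $G$, the boundary simply shifts to $E_0$ with unchanged edge weight; if $E_i$ had valency $\geq 2$ in $G$, then $\Gamma_1$ acquires a single weight-$1$ boundary leg at $E_i$ (with $E_i$ possibly becoming a new node of $\Gamma_1$), and the decoration on $E_0$ satisfies $i_{E_0}=1$ since $p$ lies on no strict transform.

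\textbf{Preservation of allowedness.} It remains to verify that the modification ``insert a weight-$1$ leg with $i=1$ at a (possibly new) node'' preserves condition (\ref{allow.2}) of Definition~\ref{def:alloweddiagram}. For an existing node $v$ of $\Gamma$: adding an index $\ell=n+1$ with $d_{n+1}=1$ and $i_{n+1}=1$ trivially satisfies both $d_{n+1}\mid i_{n+1}$ and $i_{n+1}=d_{n+1}$; using the uniform formulation in Remark~\ref{rem:allowed}(c) --- ``$d_\ell\mid i_\ell$ for $\geq n+r-2$ indices implies $i_\ell=d_\ell$ for $\geq n+r-2$ indices'' --- both sides of the implication are incremented by one through the new index, and the condition is equivalent before and after the addition. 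For a new node $v$ of $\Gamma_1$ (arising when $E_i$ was valency-$2$ in $G$), the star-shaped subdiagram at $v$ obtained by repeated splicing of $\Gamma_1$ has exactly one boundary vertex, namely $E_0$, with $d_1=i_1=1$; the $r$ arrowheads all arise from splicing along the two adjacent special edges, and the conditions for $r\in\{1,2\}$ reduce to the immediate check on $(d_1,i_1)$.

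\textbf{Main obstacle.} The principal bookkeeping difficulty is confirming that the remaining decorations $(d_{ve},i_a,N_v,\nu_v)$ of $\Gamma_1$ coincide with those of $\Gamma$, particularly when the blowup relocates an arrowhead or a dashed arrow onto $E_0$ before the splice-diagram collapse of $E_0$. This requires the determinant invariance under blowup for the edge weights $d_{ve}$, together with $N_{E_i^{(1)}}=N_{E_i}$ and $\nu_{E_i^{(1)}}=\nu_{E_i}$ (which follow from the formulas of (\ref{be:KK}) and (\ref{eq:nu})), and the identity $\nu_{E_0}=\nu_{E_i}+\sum (i_a-1)$ summed over dashed/ordinary arrows passing through $p$. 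Once these identities are in place, the splice decomposition of $\Gamma_1$ into star-shaped subdiagrams produces the same allowedness restrictions as for $\Gamma$, augmented in at most one place by an index satisfying the conditions trivially.
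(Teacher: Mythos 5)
Your proof follows essentially the same route as the paper: the same case analysis on the blowup center (generic point off the exceptional locus; intersection point of two components; generic point of a valency-2 vertex, which becomes a new node with a weight-1 leg carrying $d=i=1$; generic point of a node, which simply gains such a leg), followed by the observation that a $d=i=1$ leg cannot disturb condition (\ref{allow.2}). The only small blemishes are the off-by-one in your peripheral formula for $\nu_{E_0}$ (the correct value is $\nu_{E_0}=\nu_{E_i}+1+\sum(i_a-1)$) and the claim that the new node's star-shaped subdiagram has ``exactly one'' boundary vertex, which fails when the original string ended at a boundary vertex --- but neither affects the argument, since you correctly check both $r=1$ and $r=2$.
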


\begin{proof}
We only have to investigate the spliced star-shaped subgraphs of
$\Gamma_1$ that are new or different with respect to $\Gamma$. If
the centre $P$ of the blowing-up $h$ is either a point of a
boundary curve, or an intersection point of two components
(exceptional or strict transform), we are done because then
$\Gamma_1=\Gamma$. We are left with the following two cases for
$P$.

\vspace{1mm}

\noindent {\it Case 1.} \ $P$ is  a point of $E_j^\circ$ (that is,
a generic point of $E_j$), where the vertex corresponding to $E_j$
has valency 2 in $G_\pi(X,F,W)$ (so it does not occur explicitly
in $\Gamma$):

\begin{picture}(100,55)(-25,-10)

\put(40,20){\circle*{4}} %\put(80,20){\circle*{4}}
\put(120,20){\circle*{4}} \put(40,20){\line(1,0){80}}
\put(40,20){\line(-2,-1){30}} \put(40,20){\line(-2, 1){30}}
\put(20,23){\makebox(0,0){$\vdots$}} \put(120,20){\line(2,-1){30}}
\put(120,20){\line(2, 1){30}}
\put(137,23){\makebox(0,0){$\vdots$}}

\put(45,25){\makebox(0,0){\tiny{$p$}}}
\put(115,25){\makebox(0,0){\tiny{$p'$}}}
%\put(55,26){\makebox(0,0){$d$}} \put(105,26){\makebox(0,0){$d'$}}
%\put(138,35) {\makebox(0,0){$d'_1$}}
%\put(138,5){\makebox(0,0){$d'_{n'}$}} \put(25,35)
%{\makebox(0,0){$d_1$}} \put(25,5){\makebox(0,0){$d_n$}}
%\put(80,-10){\makebox(0,0){$e$}}

\put(200,20){\vector(-1,0){40}}
\put(170,25){\makebox(0,0)[l]{\tiny{blowup}}}

\put(245,25){\makebox(0,0){\tiny{$p$}}}
\put(315,25){\makebox(0,0){\tiny{$p'$}}}
\put(275,25){\makebox(0,0){\tiny{$q$}}}
\put(287,26){\makebox(0,0){\tiny{$q'$}}}

\put(240,20){\circle*{4}}
\put(280,20){\circle*{4}}\put(280,0){\circle*{4}}
\put(320,20){\circle*{4}}
\put(240,20){\line(1,0){80}}\put(280,20){\line(0,-1){20}}
\put(240,20){\line(-2,-1){30}} \put(240,20){\line(-2, 1){30}}
\put(220,23){\makebox(0,0){$\vdots$}}
\put(320,20){\line(2,-1){30}} \put(320,20){\line(2, 1){30}}
\put(337,23){\makebox(0,0){$\vdots$}}
\put(284,12){\makebox(0,0){\tiny{1}}}
\put(307,0){\makebox(0,0){\tiny{0}}}
\put(280,40){\makebox(0,0){\tiny{$v$}}}
\put(270,0){\makebox(0,0){\tiny{$w$}}}
%\put(250,26){\makebox(0,0){$d$}} \put(313,26){\makebox(0,0){$d'$}}
%\put(338,35) {\makebox(0,0){$d'_1$}}
%\put(338,5){\makebox(0,0){$d'_{n'}$}} \put(225,35)
%{\makebox(0,0){$d_1$}} \put(225,5){\makebox(0,0){$d_n$}}
\dashline[3]{3}(280,0)(295,0)\put(295,0){\vector(1,0){5}}
\end{picture}

\noindent (Above  we did not insert the information
about $F$; and one of the nodes of the diagram before blowup can
be replaced by a boundary vertex with or without dashed arrows.)
%its arrowheads might divide the above cases in further subcases (see below).

It is not difficult to verify that the spliced star-shaped
subgraphs around the `old' nodes  in $\Gamma$ and $\Gamma_1$ are
the same. Moreover,  the new spliced star-shaped subgraph of
$\Gamma_1$ around $v$ satisfies the definition of allowedness.
Indeed, in both cases $r=1$ or  $r=2$, the fact that the
decorations associated to the boundary vertex $w$ satisfy
$d_1=i_1=1$ finishes the verification.

\vspace{1mm}

\noindent {\it Case 2.} \ $P$ is
a point of $E_v^\circ$, where $v$ is a node in
$\Gamma$.

In this case the only novelty in $\Gamma_1$ is an extra edge  at the
node $v$ supporting a boundary vertex, again with edge decoration $d_\ell=1$ and associated number
$i_\ell=1$. This again does not affect the allowedness condition for
the star-shaped subgraph around $v$.
\end{proof}

\begin{remark}\labelpar{re:blowup}
It is possible that a divisor $W'$ on $(X,0)$ is not allowed in
the diagram associated with the {\it minimal} embedded resolution
$\pi$ of $(X,F')$, but is allowed in some  $\GaxFW$ associated
with some non--minimal $\pi$. Consider for example the situation

\begin{picture}(100,55)(-25,-10)

\put(40,20){\circle*{4}} %\put(80,20){\circle*{4}}
\put(120,20){\circle*{4}} \put(40,20){\vector(1,0){110}}
%\put(40,20){\line(-2,-1){30}} \put(40,20){\line(-2, 1){30}}
%\put(20,23){\makebox(0,0){$\vdots$}}
\put(120,20){\vector(2,-1){30}}
\put(120,20){\vector(2, 1){30}}
\put(110,23){\makebox(0,0){\tiny{$p$}}}
%\put(42,-10){\makebox(0,0){$v_L$}}
%\put(118,-10){\makebox(0,0){$v_R$}}
%\put(55,26){\makebox(0,0){$d$}} \put(105,26){\makebox(0,0){$d'$}}
%\put(138,35) {\makebox(0,0){$d'_1$}}
%\put(138,5){\makebox(0,0){$d'_{n'}$}} \put(25,35)
%{\makebox(0,0){$d_1$}} \put(25,5){\makebox(0,0){$d_n$}}
%\put(80,-10){\makebox(0,0){$e$}}

\put(210,20){\vector(-1,0){40}}
\put(180,25){\makebox(0,0)[l]{\tiny{blowup}}}

\put(310,23){\makebox(0,0){\tiny{$p$}}}
\put(240,20){\circle*{4}}
\put(280,20){\circle*{4}}\put(280,0){\circle*{4}}
\put(320,20){\circle*{4}}
\put(240,20){\vector(1,0){110}}\put(280,20){\line(0,-1){20}}
%\put(240,20){\line(-2,-1){30}} \put(240,20){\line(-2, 1){30}}
%\put(220,23){\makebox(0,0){$\vdots$}}
\put(320,20){\vector(2,-1){30}} \put(320,20){\vector(2, 1){30}}
%\put(337,23){\makebox(0,0){$\vdots$}}
\put(284,12){\makebox(0,0){\tiny{1}}}
\put(312,0){\makebox(0,0){\tiny{$i-1$}}}\put(255,0){\makebox(0,0){\tiny{$q-1$}}}
\put(273,23){\makebox(0,0){\tiny{$q$}}}\put(287,24){\makebox(0,0){\tiny{$d$}}}
%\put(270,0){\makebox(0,0){\tiny{$w$}}}
%\put(250,26){\makebox(0,0){$d$}} \put(313,26){\makebox(0,0){$d'$}}
%\put(338,35) {\makebox(0,0){$d'_1$}}
%\put(338,5){\makebox(0,0){$d'_{n'}$}} \put(225,35)
%{\makebox(0,0){$d_1$}} \put(225,5){\makebox(0,0){$d_n$}}
\dashline[3]{3}(280,0)(295,0)\put(295,0){\vector(1,0){5}}
\dashline[3]{3}(240,20)(240,5)\put(240,5){\vector(0,-1){5}}
\end{picture}

\noindent where $\pi$ is obtained from the minimal embedded
resolution $\pi_{\min}$ by composing with one blowing-up, and
$i\in\Z_{>1}$. The component of the strict transform of $W'$ with
multiplicity $i-1$ intersects the exceptional divisor of
$\pi_{\min}$ in a component $E_j$ of valency 2, and this was not
permitted, cf. (\ref{ss:3}).
\end{remark}

\begin{remark}\labelpar{re:REMARK}
 Assume that $F$ has only one arrowhead with multiplicity 1. Then for any $W$, the
dashed arrowhead with multiplicity $i-1$ which doubles the
arrowhead of $F$ has an almost irrelevant geometric contribution.
Indeed, its only effect is the following: in any ratio $\nu_v/N_v$
it has a global integral ($i-1$)-shift. In particular, in such a
situation (having connections with monodromy in mind), we might take $i-1=0$ without restricting the
generality of the discussion.
\end{remark}

\begin{example}\labelpar{ex:EXAMPLE4}
Let us continue the discussion of Example~\ref{ex:EXAMPLE}.
Having in mind Remark~\ref{re:REMARK},
the general form $W$ will have the following dashed arrowheads:

\begin{picture}(400,80)(0,-30)

\put(100,25){\circle*{4}} \put(150,25){\circle*{4}}
\put(200,25){\circle*{4}} \put(250,25){\circle*{4}}
\put(300,25){\circle*{4}} \put(150,5){\circle*{4}}
\put(250,5){\circle*{4}} \put(100,25){\line(1,0){200}}
\put(150,25){\line(0,-1){20}} \put(200,25){\vector(0,-1){20}}
%\put(370,5){\circle*{4}}
\put(250,25){\line(0,-1){20}} \put(145,30){\makebox(0,0){\tiny{$2$}}}
\put(195,30){\makebox(0,0){\tiny{$1$}}}\put(155,30){\makebox(0,0){\tiny{$7$}}}
\put(245,30){\makebox(0,0){\tiny{$7$}}}\put(205,30){\makebox(0,0){\tiny{$1$}}}
%\put(375,35){\makebox(0,0){\tiny{$-13$}}}
%\put(375,-3){\makebox(0,0){\tiny{$(N)$}}}
\put(256,30){\makebox(0,0){\tiny{$2$}}}
%\put(425,35){\makebox(0,0){\tiny{$-2$}}}
\put(155,20){\makebox(0,0){\tiny{$3$}}}
\put(255,20){\makebox(0,0){\tiny{$3$}}}%\put(322,5){\makebox(0,0){$-7$}}

\put(100,18){\makebox(0,0){\tiny{$(3)$}}}
\put(145,18){\makebox(0,0){\tiny{$(6)$}}}
\put(206,18){\makebox(0,0){\tiny{$(1)$}}}
\put(245,18){\makebox(0,0){\tiny{$(6)$}}}
\put(300,18){\makebox(0,0){\tiny{$(3)$}}}
\put(144,2){\makebox(0,0){\tiny{$(2)$}}}
\put(200,-2){\makebox(0,0){\tiny{$(1)$}}}
\put(244,2){\makebox(0,0){\tiny{$(2)$}}}
\put(55,25){\makebox(0,0){\tiny{$i_1-1$}}}
\put(345,25){\makebox(0,0){\tiny{$i'_1-1$}}}
\put(150,-25){\makebox(0,0){\tiny{$i_2-1$}}}
\put(250,-25){\makebox(0,0){\tiny{$i'_2-1$}}}

\dashline[3]{3}(100,25)(75,25)\put(75,25){\vector(-1,0){5}}
\dashline[3]{3}(300,25)(325,25)\put(325,25){\vector(1,0){5}}
\dashline[3]{3}(150,5)(150,-15)\put(150,-15){\vector(0,-1){5}}
\dashline[3]{3}(250,5)(250,-15)\put(250,-15){\vector(0,-1){5}}
\end{picture}

\noindent The splice decomposition provides:

\begin{picture}(400,90)(0,-40)

\put(0,25){\circle*{4}} \put(25,25){\circle*{4}}
\put(200,25){\circle*{4}} \put(375,25){\circle*{4}}
\put(400,25){\circle*{4}} \put(25,5){\circle*{4}}
\put(375,5){\circle*{4}} \put(0,25){\vector(1,0){75}}
\put(400,25){\vector(-1,0){75}}
\put(25,25){\line(0,-1){20}} \put(200,25){\vector(0,-1){20}}
\put(375,25){\line(0,-1){20}} \put(20,30){\makebox(0,0){\tiny{$2$}}}
\put(195,30){\makebox(0,0){\tiny{$1$}}}\put(30,30){\makebox(0,0){\tiny{$7$}}}
\put(370,30){\makebox(0,0){\tiny{$7$}}}\put(205,30){\makebox(0,0){\tiny{$1$}}}
%\put(375,35){\makebox(0,0){\tiny{$-13$}}}
%\put(375,-3){\makebox(0,0){\tiny{$(N)$}}}
\put(380,30){\makebox(0,0){\tiny{$2$}}}
%\put(425,35){\makebox(0,0){\tiny{$-2$}}}
\put(30,20){\makebox(0,0){\tiny{$3$}}}
\put(380,20){\makebox(0,0){\tiny{$3$}}}%\put(322,5){\makebox(0,0){$-7$}}

%\put(0,18){\makebox(0,0){\tiny{$(3)$}}}
%\put(20,18){\makebox(0,0){\tiny{$(6)$}}}
%\put(206,18){\makebox(0,0){\tiny{$(1)$}}}
%\put(370,18){\makebox(0,0){\tiny{$(6)$}}}
%\put(400,18){\makebox(0,0){\tiny{$(3)$}}}
%\put(25,-2){\makebox(0,0){\tiny{$(2)$}}}
\put(200,-2){\makebox(0,0){\tiny{$(1)$}}}
%\put(375,-2){\makebox(0,0){\tiny{$(2)$}}}

%\put(25,45){\makebox(0,0){\tiny{$\nu_1=-13$}}}
%\put(200,45){\makebox(0,0){\tiny{$\nu_0=-2$}}}
%\put(375,45){\makebox(0,0){\tiny{$\nu_1'=-13$}}}

\dashline[3]{3}(25,27)(70,27)\put(70,27){\vector(1,0){5}}
\dashline[3]{3}(375,27)(330,27)\put(330,27){\vector(-1,0){5}}
\dashline[3]{3}(200,25)(155,25)\put(155,25){\vector(-1,0){5}}
\dashline[3]{3}(200,25)(245,25)\put(245,25){\vector(1,0){5}}

\put(85,20){\makebox(0,0){\tiny{$(1)$}}}
\put(85,30){\makebox(0,0){\tiny{$i-1$}}}
\put(136,25){\makebox(0,0){\tiny{$i_0-1$}}}
\put(264,25){\makebox(0,0){\tiny{$i_0'-1$}}}
\put(315,20){\makebox(0,0){\tiny{$(1)$}}}
\put(315,30){\makebox(0,0){\tiny{$i'-1$}}}

\put(0,-5){\makebox(0,0){\tiny{$i_1-1$}}}
\put(400,-5){\makebox(0,0){\tiny{$i'_1-1$}}}
\put(25,-25){\makebox(0,0){\tiny{$i_2-1$}}}
\put(375,-25){\makebox(0,0){\tiny{$i'_2-1$}}}

\dashline[3]{3}(0,25)(0,5)\put(0,5){\vector(0,-1){5}}
\dashline[3]{3}(400,25)(400,5)\put(400,5){\vector(0,-1){5}}
\dashline[3]{3}(25,5)(25,-15)\put(25,-15){\vector(0,-1){5}}
\dashline[3]{3}(375,5)(375,-15)\put(375,-15){\vector(0,-1){5}}
\end{picture}

\noindent In the above picture
$i-1=i_0'-1=-2+3(i_1'-1)+2(i_2'-1)$ and
$i'-1=i_0-1=-2+3(i_1-1)+2(i_2-1)$.

Assume that $W$ is allowed. This imposes the following numerical conditions.

\noindent $\bullet$ \ In the middle graph $\Gamma_0$ we impose: either
$3i_1+2i_2=7$ or  $3i_1'+2i_2'=7$.

\noindent $\bullet$ \ In $\Gamma_1$ one gets: if $2\mid i_1$ or $3\mid i_2$
then either $2=i_1$ or $3=i_2$. Note that if   $3i_1+2i_2=7$ then
$2\nmid i_1$ and $3\nmid i_2$. There is a symmetric restriction in
$\Gamma_1'$ too: if $2\mid i'_1$ or $3\mid i'_2$ then either
$2=i'_1$ or $3=i'_2$.

In particular, the zero form $W=0$ is {\it not allowed}.
\end{example}

\subsection{Restricting and extending allowed divisors}\labelpar{ss:extend}
Consider the splicing of a given diagram $\Gamma(F)$ along a
special edge $e$ as in (\ref{be:f}). A basic idea in the
definition of an allowed $W$ for $\Gamma(F)$ is that the induced
$W_L$ and $W_R$ should be allowed for $\Gamma_L(F_L)$ and
$\Gamma_R(F_R)$, respectively. This is almost clear from the
nature of the definitions.

There is potentially a problem when (say) $\calA_{F,L}=\emptyset$,
since then a new dashed arrowhead  at a boundary vertex of
$\Gamma_R(F_R)$  is created, and it could have associated
decoration $i'=0$, see the last picture in (\ref{be:fo}). Indeed,
in Definition~\ref{def:alloweddiagram}, part (3), we asked for
each star--shaped subdiagram  to satisfy condition (2), but we
didn't ask (1). In the next lemma we will verify that (1) will be
automatically satisfied.

\begin{lemma}\labelpar{lemma:nonzero}
When $\calA_{F,L}=\emptyset$ and $i_a\neq 0$ for all $a\in
\calA_{W,L}$, then $i'\neq 0$. In particular, allowed divisors on
a graph always \lq restrict\rq\ to allowed divisors on spliced
subdiagrams.
\end{lemma}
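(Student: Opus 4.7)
The plan is to first handle the case where $\Gamma_L$ is star-shaped, using allowedness condition (\ref{allow.2}), and then reduce the general situation to it by induction on the number of nodes of $\Gamma_L$, absorbing the side-branches of $v_L$ into dashed decorations at new boundary vertices.

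\emph{Star-shaped case.} Let $v_L$ be the central node of $\Gamma_L$, with edge weights $d_1,\ldots,d_n$ toward boundary vertices $w_1,\ldots,w_n$ and weight $d$ toward $\bar v_L$. Each $w_l$ may carry a dashed arrowhead with decoration $i_l-1$; we set $i_l=1$ when none is present. Since $\calA_{F,L}=\emptyset$, the unique arrowhead-supporting edge of this star is the one to $\bar v_L$, so $r=1$ in Definition~\ref{def:alloweddiagram}(\ref{allow.2}). Applying (\ref{eq:totali}) to the left side --- with $\delta_{v_L}=n+1$, $\delta_{w_l}=1$, $\ell_{e,v_L}=\prod_l d_l$, $\ell_{e,w_l}=\ell_{e,a_l}=\prod_{l'\neq l}d_{l'}$ --- yields
\[
i' \;=\; \prod_{l=1}^n d_l\cdot\Big(1-n+\sum_{l=1}^n \frac{i_l}{d_l}\Big).
\]
Thus $i'=0$ is equivalent to $\sum_l i_l/d_l=n-1\in\Z$. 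The pairwise coprimality of $d_1,\ldots,d_n$ given by (\ref{eq:compcon})(b), combined with reduction of the common-denominator numerator modulo each $d_l$, forces $d_l\mid i_l$ for every $l$. Writing $i_l=k_l d_l$ with $k_l\in\Z\setminus\{0\}$ (using the hypothesis $i_a\neq 0$), the equation becomes $\sum_l k_l=n-1$. Condition (\ref{allow.2}) with $r=1$ now forces $k_l=1$ for at least $n-1$ indices, so the possible exceptional $k_{l_0}$ must satisfy $k_{l_0}=0$, contradicting $i_{l_0}\neq 0$. Hence $i'\neq 0$.

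\emph{General case.} Induct on the number of nodes of $\Gamma_L$. If $\Gamma_L$ has nodes beyond $v_L$, let $u_1,\ldots,u_m$ be those adjacent to $v_L$, joined to $v_L$ by edges $e^*_j$. The outer branch $\Gamma^{(j)}$ at $u_j$ inherits empty $F$-arrowhead set from $\calA_{F,L}=\emptyset$ and has strictly fewer nodes than $\Gamma_L$. Splice $\Gamma$ along each $e^*_j$; by the inductive hypothesis, the correction decoration $i^{(j)}$ at the new boundary vertex $\bar u_j$ on the $v_L$-side is nonzero, giving a legitimate dashed decoration $i^{(j)}-1$ there. After these preliminary splicings the piece still containing $v_L$ and $\bar v_L$ is the star at $v_L$ with boundary vertices $w_1,\ldots,w_n,\bar u_1,\ldots,\bar u_m$ all carrying nonzero dashed decorations and with a single arrowhead-supporting edge to $\bar v_L$. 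Allowedness condition (\ref{allow.3}) propagates condition (\ref{allow.2}) to this star.

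To close the induction I would verify that the value $i'$ for the original $e$-splicing coincides with the $i'$ computed from this isolated star. The key identity, for each $j$,
\[
\sum_{v\in\bar\calV(\Gamma^{(j)})}(2-\delta_v)\ell_{e,v}+\sum_{a\in\calA_{W,\Gamma^{(j)}}}(i_a-1)\ell_{e,a} \;=\; i^{(j)}\cdot\ell_{e,\bar u_j},
\]
follows by factoring out the common prefactor $\ell_{e,\bar u_j}$, namely the product of edge weights at $v_L$ on edges other than $e$ and $e^*_j$, from every $\ell_{e,\cdot}$-term on the left and recognizing the remaining bracket as the defining expression for $i^{(j)}$ supplied by (\ref{eq:totali}) applied to the sub-splicing at $e^*_j$. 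Substituting these identities into the formula (\ref{eq:totali}) for $i'$ attached to $\Gamma_L$ reduces it to the star-shaped expression, where the base case delivers $i'\neq 0$.

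The main technical obstacle is verifying the factorization identity uniformly across both vertex-contributions ($(2-\delta_v)$) and arrowhead-contributions ($(i_a-1)$), and confirming that the side with empty $F$-arrowhead set in each sub-splicing is indeed the $\Gamma^{(j)}$-side so that the inductive hypothesis genuinely applies there. Both points amount to careful bookkeeping of the $\ell$-products under multi-splicing, which exhibits the needed \lq associativity\rq\ of the splice-correction data.
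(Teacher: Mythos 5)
Your proposal is correct and follows essentially the same route as the paper: reduce to the star-shaped case by induction on the number of nodes of $\Gamma_L$, then in that case compute $i'=(1-n)D+\sum_\ell (D/d_\ell)i_\ell$, deduce $d_\ell\mid i_\ell$ for all $\ell$ from pairwise coprimality, and invoke the $r=1$ allowedness condition to force $i_{\ell_0}=0$, a contradiction. The only differences are that you spell out the $\ell$-product factorization underlying the induction step (which the paper leaves implicit in "By induction\dots it is sufficient\dots") and that you omit the one-line check of the lemma's final "In particular" sentence ($i_a=1$ for $a\in\calA_F\setminus\calA_W$, $i_a\neq 0$ for $a\in\calA_W\setminus\calA_F$ by allowedness).
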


\begin{proof}
By induction on the number of nodes in $\Gamma_L$ it is sufficient
to prove that $i'\neq 0$ when $\Gamma_L$ is star-shaped. Let
$i_1-1,\dots,i_n-1$ be the multiplicities of the dashed arrowheads
along the edges with decorations $d_1,\dots,d_n$.
Denote $D:=\prod_{\ell=1}^n d_\ell$. Then, using
(\ref{eq:totali}),
$$i'=(1-n)D+\sum_{\ell=1}^n \frac D{d_\ell} i_\ell.$$
Suppose that $i'=0$. Then $d_\ell$
divides $\frac D{d_\ell} i_\ell$, hence divides $i_\ell$ too, for all $\ell$. By
the definition of allowedness we then know that $i_\ell=d_\ell$
for at least $n-1$ of the $i_\ell$; say for $i_2,\dots,i_n$. Thus
$$0=i'=(1-n)D+(n-1)D+ Di_1/d_1$$
and hence $i_1=0$, contradicting the assumptions.
 For the second
statement, note that $i_a=1$ for any $a\in {\calA}_F\setminus
{\calA}_W$, while $i_a\not=0$ for $a\in {\calA}_W\setminus
{\calA}_F$
 since $W$ is allowed.
\end{proof}

A crucial question  in our setting  is the converse: can we \lq
extend\rq\ an allowed $W^\flat$ on $\Gamma_R(F_R)$ to $\Gamma(F)$,
that is, can we construct an allowed $W$ on $\Gamma(F)$ for which
$W_R=W^\flat$?

It is enough to study this question when $\Gamma_L(F_L)$ is
star-shaped, since we can then proceed further inductively.

\begin{proposition}\labelpar{prop:extend}
Let $\Gamma(F)$ be an arbitrary diagram as in (\ref{ss:2}).  Splice
$\Gamma(F)$ along a special edge $e$ such that $\Gamma_L(F_L)$ is
star-shaped. Then the  map
$$
\Psi:\{\mbox{allowed }W\mbox{ for }\Gamma(F)\} \to \{\mbox{allowed
}W^\flat\mbox{ for }\Gamma_R(F_R)\} : W\mapsto W_R
$$
is surjective if there is at least one arrowhead in $\Gamma(F)$ on
the right of $e$, or the number of arrowheads on the left of $e$
is different from one and two.
\end{proposition}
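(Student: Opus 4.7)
The plan is to construct $W$ on $\Gamma(F)$ explicitly by extending $W^\flat$: keep the dashed arrowheads of $W^\flat$ unchanged at every vertex of $\bar\calV(\Gamma_R)$, and then choose, at each left-side boundary vertex of $v_L$ (with edge weight $d_\ell$), a dashed arrowhead of multiplicity $i_\ell-1$, and, doubling each of the $r_L$ arrowheads of $\calA_{F,L}$, a dashed arrowhead of multiplicity $j_k-1$. The only nontrivial constraints on these free parameters are (i) the matching $i'=i'_\flat$, where $i'_\flat-1$ is the multiplicity of the $\bar v_R$-arrow of $W^\flat$ (set $i'_\flat=1$ if the arrow is absent), and (ii) allowedness of the star-shaped diagram $\Gamma_L(F_L,W_L)$, which is the only sub-diagram of $\Gamma(F,W)$ whose allowedness is not already guaranteed by that of $W^\flat$. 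The symmetric form of (\ref{eq:totali}) applied on the left, using bare valency $\delta_{v_L}=n+1$, yields
\[
i' \;=\; -(n-1)D \;+\; \sum_{\ell=1}^n\frac{D}{d_\ell}\,i_\ell \;+\; D\sum_{k=1}^{r_L}(j_k-1),\qquad D:=\prod_\ell d_\ell,
\]
so the matching is equivalent to the single linear Diophantine equation $\sum(D/d_\ell)i_\ell + D\sum (j_k-1) = T$, where $T := i'_\flat + (n-1)D$.

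The key technical input is a Chinese Remainder observation: since the $d_\ell$ are pairwise coprime, reducing the equation modulo each $d_\ell$ shows that in every integer solution, $d_\ell \mid i_\ell$ holds if and only if $d_\ell \mid T$. Thus the divisibility pattern among the $i_\ell$ is forced by $T$ alone and does not depend on the chosen solution. Combined with the elementary fact that pairwise coprimality upgrades several $d_\ell\mid T$ to their product dividing $T$, this makes the integrality verifications routine.

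With this tool in hand, I would split into cases following the hypothesis of the proposition. If $r_R\ge 1$ and $r_L\ge 2$, then $r_{\Gamma_L}\ge 3$ so allowedness is empty; B\'ezout (using $\gcd(D/d_\ell)_\ell = 1$ for $n\ge 2$, or the $j_k$-variables when $n\le 1$) produces the desired solution, and the $i_\ell,j_k$ can be shifted within their residue classes to stay non-zero. If $r_R\ge 1$ and $r_L=1$, then $r_{\Gamma_L}=2$: when $D\nmid T$ some $d_\ell\nmid i_\ell$ automatically holds so allowedness is vacuous and B\'ezout applies; when $D\mid T$ we force $i_\ell=d_\ell$ for all $\ell$, summing to $nD$, and solve uniquely $j_1=T/D-n+1\in\Z$. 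If $r_R\ge 1$ and $r_L=0$, then $r_{\Gamma_L}=1$ and the node condition forces $n\ge 2$; with no $j$-freedom, I distinguish according to how many of the $d_\ell$ divide $T$: (a) fewer than $n-1$ (allowedness vacuous, B\'ezout), (b) exactly $n-1$, say for $\ell<n$ (set $i_\ell=d_\ell$ for $\ell<n$; by coprimality $d_1\cdots d_{n-1}\mid T$, giving an integer $i_n=T/(d_1\cdots d_{n-1})-(n-1)d_n$, automatically with $d_n\nmid i_n$), (c) all $n$, i.e.\ $D\mid T$ (set $i_\ell=d_\ell$ for $\ell<n$, giving $i_n=d_n(T/D-n+1)$). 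Finally, if $r_R=0$ and $r_L\ge 3$, then $r_{\Gamma_L}\ge 3$ is allowedness-free; the at least three $j_k$-variables together with the $i_\ell$'s hit every integer $T$, and the required $i\neq 0$ at $\bar v_L$ follows from the symmetric version of Lemma~\ref{lemma:nonzero} applied to the $W^\flat$ side.

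The main obstacle is the sub-case $r_R\ge 1$, $r_L=0$, where no $j$-variable is available and the allowedness is strictest; the forced values $i_n=d_n(T/D-n+1)$ (or its analogue) could in principle equal zero and violate (\ref{allow.1}). A direct check shows that this happens precisely when $T=(n-1)D$, i.e.\ when $i'_\flat=0$. But in this sub-case $r_L=0$ means $\calA_{F,L}=\emptyset$ and hence $\bar v_R$ is a pure boundary vertex of $\Gamma_R$ (with $N=0$), so the allowedness of $W^\flat$ together with the basic requirement $(N,i_a)\neq(0,0)$ forces $i'_\flat\neq 0$. This single observation eliminates the only potentially bad scenario and closes the argument, explaining simultaneously why the cases $r_R=0,\ r_L\in\{1,2\}$ must be excluded: there the forced value of $i$ (not $i'_\flat$) plays the role of the problematic datum and cannot be adjusted.
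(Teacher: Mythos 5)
Your proposal is correct and follows essentially the same route as the paper: reduce to the single linear Diophantine matching equation, observe that $d_\ell\mid i_\ell$ is forced by $d_\ell\mid i'$ alone, and then split on $r_{\Gamma_L}$, setting $i_\ell=d_\ell$ on the divisible slots and solving for the last one (using Lemma~\ref{lemma:nonzero} to ensure the residual decoration is nonzero when it sits on a pure boundary vertex). Your treatment is in fact a bit more explicit than the paper's on the CRT reduction and on checking that the forced multiplicity does not vanish, but the underlying argument is the same.
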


\begin{proof}
Take an allowed $W^\flat$ for $\Gamma_R(F_R)$. Let $i'-1$ be the
decoration of the dashed arrowhead at the left of $v_R$.

Let $i_1-1,\dots, i_n-1 \ (n\geq2)$ be the (still to be
determined) decorations of the dashed arrowheads of some $W$, that
we want to construct in the pre-image of $W^\flat$ by $\Psi$. Let
$j-1$ be the decoration of the new induced dashed arrowhead at
$v_L$ for $\Gamma_L(F_L)$. Note that $j$ is {\it fixed} in the
sense that it is uniquely determined in terms of $W^\flat$ by the
formula (\ref{eq:totali}). (And, when there are no arrowheads on
the right of $e$, we have that $j\neq 0$ by the argument in the
proof of Lemma \ref{lemma:nonzero}.)

\begin{picture}(200,75)(85,-20)

%\put(40,20){\circle*{4}} %\put(80,20){\circle*{4}}
%\put(120,20){\circle*{4}} \put(40,20){\line(1,0){80}}
%\put(40,20){\line(-2,-1){30}} \put(40,20){\line(-2, 1){30}}
%\put(20,23){\makebox(0,0){$\vdots$}} \put(120,20){\line(2,-1){30}}
%\put(120,20){\line(2, 1){30}}
%\put(137,23){\makebox(0,0){$\vdots$}}
%\put(42,-10){\makebox(0,0){$v_L$}}
%\put(118,-10){\makebox(0,0){$v_R$}}
%\put(55,26){\makebox(0,0){$d$}} \put(105,26){\makebox(0,0){$d'$}}
%\put(138,35) {\makebox(0,0){$d'_1$}}
%\put(138,5){\makebox(0,0){$d'_{n'}$}} \put(25,35)
%{\makebox(0,0){$d_1$}} \put(25,5){\makebox(0,0){$d_n$}}
%\put(42,10){\makebox(0,0){\tiny{$(N)$}}}
%\put(118,10){\makebox(0,0){\tiny{$(N')$}}}

%\put(160,20){\vector(1,0){40}}
%\put(160,25){\makebox(0,0)[l]{splicing}}

\put(240,20){\circle*{4}} %\put(270,20){\circle*{4}} \put(340,20){\circle*{4}}
\put(370,20){\circle*{4}}
%\put(240,20){\vector(1,0){30}}
%\put(370,20){\vector(-1,0){30}}
\put(240,20){\line(-2,-1){30}} \put(240,20){\line(-2, 1){30}}
\put(220,23){\makebox(0,0){$\vdots$}}
\put(370,20){\line(2,-1){30}} \put(370,20){\line(2, 1){30}}
\put(387,23){\makebox(0,0){$\vdots$}}
\put(242,-10){\makebox(0,0){$v_L$}}
\put(368,-10){\makebox(0,0){$v_R$}}
\put(250,28){\makebox(0,0){$d$}} \put(363,28){\makebox(0,0){$d'$}}
\put(388,35) {\makebox(0,0){$d'_1$}}
\put(388,5){\makebox(0,0){$d'_{n'}$}} \put(225,35)
{\makebox(0,0){$d_1$}} \put(225,5){\makebox(0,0){$d_n$}}
%\put(242,10){\makebox(0,0){\tiny{$(N)$}}}
%\put(368,10){\makebox(0,0){\tiny{$(N')$}}}
\put(288,20){\makebox(0,0){\tiny{$j-1$}}}
\put(324,20){\makebox(0,0){\tiny{$i'-1$}}}
\put(175,5){\makebox(0,0){\tiny{$i_n-1$}}}
\put(175,35){\makebox(0,0){\tiny{$i_1-1$}}}

\dashline[3]{3}(240,20)(265,20)\put(265,20){\vector(1,0){5}}
\dashline[3]{3}(370,20)(345,20)\put(345,20){\vector(-1,0){5}}

\put(210,35){\circle*{4}}\put(210,5){\circle*{4}}
\dashline[3]{3}(210,35)(195,35)\put(195,35){\vector(-1,0){5}}
\dashline[3]{3}(210,5)(195,5)\put(195,5){\vector(-1,0){5}}
\end{picture}

\noindent (In this diagram we did not insert the information regarding the divisor
$F$, the corresponding arrowhead positions might determine different cases, see below.)

Denote $D:=\prod_{\ell=1}^n d_\ell$. Then, cf. (\ref{eq:totali}),
we are searching for $i_1,\ldots, i_n$ with
\begin{equation}\label{eq:ext}
i'=(1-n)D+ \sum_{\ell=1}^n \frac D{d_\ell} i_\ell.
\end{equation}
Since $\gcd_\ell \{ D/d_\ell\}=1$, we know that there exist
$i_1,\dots,i_n \in \Z$ satisfying (\ref{eq:ext}). We have to
verify that this can be done compatibly with the restrictions on
the $i_\ell$ and $j$, when {\it exactly one or two} (ordinary)
arrowheads are among the legs in $\Gamma_L(F_L)$.
%In the picture above we did not draw the ordinary arrowheads.

Note first that, by (\ref{eq:ext}), we have for any
$\ell=1,\dots,n$ that $d_\ell|i'$ if and only if $d_\ell|i_\ell$.
With the given assumptions on the arrowheads in $\Gamma(F)$, we
encounter two cases.

\smallskip
(1) There is exactly one arrowhead on $\Gamma_L(F_L)$, and it
coincides with the dashed $(j-1)$--arrowhead. Then we suppose that
$d_\ell|i'$ for at least $n-1$ of the $i_\ell$, say for
$i_1,\dots, i_{n-1}$. (Otherwise nothing has to be verified.)

(2) There are exactly two arrowheads on $\Gamma_L(F_L)$, and they
coincide with the dashed $(j-1)$-- and $(i_n-1)$--arrowheads. Then
we suppose that $d_\ell|i'$ for $\ell=1,\dots,n-1$.

\smallskip
\noindent In each of these cases we take $i_\ell=d_\ell$ for
$\ell=1,\dots,n-1$ and then, in order to satisfy (\ref{eq:ext}),
we take $i_n$ given by $i'=Di_n/d_n$. This way we thus
constructed an allowed $W$ for $\Gamma(F)$ that \lq restricts\rq\
to $W^\flat$.
\end{proof}

\begin{remark}\labelpar{re:extend} \ (a)
Consider in the proof above the excluded cases.
First assume that there are  exactly two
arrowheads on $\Gamma_L(F_L)$, namely when they coincide with the
dashed $(i_{n-1}-1)$-- and $(i_n-1)$--arrowheads. To verify
allowedness we have to suppose that $d|j$ and $d_\ell|i'$ for
$\ell=1,\dots,n-2$. Then, in order to get an allowed extension
 we should have that $d=j$, but this is
not true in general.

Similarly, assume that  $\Gamma_L(F_L)$ has
exactly one arrowhead which
coincides with the dashed $(i_n-1)$--arrowhead. Then in the situation
$d|j$, $d_\ell| i'$ for $\ell\leq n-2$ but $d_{n-1}\nmid i'$
one gets an allowed extension only if $d=j$.

These cases motivate the restrictions of (\ref{prop:extend}).

\noindent
This discussion shows the following {\bf Addendum to
Proposition~\ref{prop:extend}}: {\it with the above notations, in
the following cases an extension is still possible:}
\begin{equation}\label{eq:addendum} \begin{array}{l}
\mbox{{\it (a) \ \  either $d\nmid j$ or $d=j$;} } \\
\mbox{{\it (b) \ \  the only arrowhead
coincides with the dashed $(i_n-1)$--arrowhead }} \\
\mbox{{\it \hspace{8mm} and
$d_\ell| i'$ for $\ell\leq n-1$. }}\end{array}
\end{equation}

(b) Let $\Gamma(F)$ be an arbitrary diagram. Let $\Gamma_\calA$ be
that minimal connected subdiagram of $\Gamma$ which contains those
nodes which  either support at least one arrowhead of $F$, or sit on a (geodesic)
path connecting two arrowheads of $F$, and those boundary vertices
which are supported by these nodes. The connected components of
$\Gamma\setminus \Gamma_\calA$ are denoted by
$\{\Gamma_j\}_{j\in\calJ}$, and each $\Gamma_j$ is connected to
$\Gamma_\calA$ at the vertex $v_j$ of $\Gamma_\calA$. (For
example, if $\Gamma$ is the minimal
 diagram of a plane curve singularity, then $|\calJ|\leq 1$.) Then one has the following
 facts.

\vspace{1mm}

$\bullet$ \ Any allowed $P$--divisor supported on a star--shaped sub--diagram
centered at any node of $\Gamma_\calA$ can be extended by (\ref{prop:extend})
to an allowed $P$--divisor of the whole $\Gamma(F)$. In particular,
\begin{equation*}
\mbox{{\it any $\Gamma(F)$ always admits allowed divisors $W$.}}
\end{equation*}

$\bullet$ \ Any allowed $P$--divisor  on a star--shaped diagram
centered at a vertex $v$ in $\Gamma_j$ can be extended `away from
$v_j$'. In order to extend it `in the direction of $v_j$'
one needs some extra conditions (like in (\ref{eq:addendum})).
\end{remark}

\section{Allowed forms/divisors induce eigenvalues}\labelpar{s:4}

\subsection{} In this section we prove that the poles of the topological
zeta function associated to any $\Gamma(F)$ and allowed divisor
$W$ provide eigenvalues for the monodromy zeta function.
%We start with the following lemma.

\begin{lemma}\labelpar{le:nopole}
Let $S$ be the star-shaped diagram  as in Definition
\ref{def:alloweddiagram} with $r=1$ or $r=2$ ordinary arrowheads,
equipped with decorations as below. We assume that $i_\ell=d_\ell$
for $\ell=1,\dots,n-1$ (and $i_n\neq 0$) if $r=1$, and
$i_\ell=d_\ell$ for $\ell=1,\dots,n$ if $r=2$ . Thus $W$ is
allowed.  When $r=2$, we assume also that $-\frac{\nu_v}{N_v}$ is
not a pole of order $2$ of $Z(S)$, that is, $\frac{\nu_v}{N_v} \neq
\frac{k_1}{N_1}$ and $\frac{\nu_v}{N_v} \neq \frac{k_2}{N_2}$.
Then, in all the above situations,
$-\frac{\nu_v}{N_v}$ is not a pole of $Z(S)$.
\end{lemma}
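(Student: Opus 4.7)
\textit{Proof plan.} Since $S$ is star-shaped, Definition~\ref{I:zetadiagram} gives
$$Z(S;s)=\frac{B(s)}{\nu_v+sN_v},\qquad B(s):=2-\delta_v''+\sum_{\ell=1}^n\frac{d_\ell}{i_\ell}+\sum_{j=1}^r\frac{d_{va_j}}{i_{a_j}+sN_{a_j}},$$
with no special-edge contributions. The hypotheses guarantee that at $s_0:=-\nu_v/N_v$ every arrowhead denominator $i_{a_j}+s_0N_{a_j}$ is nonzero (automatic for $r=1$, because $i_n\ne 0$ will force $s_0\ne -i_a/N_a$ once $\nu_v$ is computed; explicitly assumed for $r=2$). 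Hence it suffices to prove $B(s_0)=0$. The plan is a direct computation of $\nu_v$ via (\ref{eq:nu}) followed by substitution into $B(s_0)$.

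The crucial bookkeeping is that formula (\ref{eq:nu}) combines each boundary vertex $w_\ell$ with its (possibly trivial) dashed arrow into a single contribution $i_\ell\ell_{vw_\ell}=i_\ell d_v/d_\ell$ to $\nu_v$; writing $D:=\prod_\ell d_\ell$, the allowedness hypothesis $i_\ell=d_\ell$ collapses each such contribution to $d_v$. Similarly, letting $\alpha_j\in\{0,1\}$ indicate whether the edge weight $d_{va_j}\ge 2$ (so that in (\ref{eq:nu}) the arrowhead $a_j$ is counted as a boundary vertex of the extended $\Gax$ as explained in (\ref{be:KK}); if $\alpha_j=0$ then $a_j$ is a generic Seifert fiber, absent from $\Gax$), the arrowhead $a_j$ contributes $(\alpha_j+i_{a_j}-1)\ell_{va_j}$ to $\nu_v$, and $\delta_v=n+\sum_j\alpha_j$.

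For $r=2$ with every $i_\ell=d_\ell$, the $v$-term $(2-\delta_v)d_v$ together with the $n$ collapsed boundary terms equals $(2-\alpha_1-\alpha_2)d_v$, and a short case-by-case check in each of the four subcases $(\alpha_1,\alpha_2)\in\{0,1\}^2$ absorbs this into the arrowhead contributions to yield the uniform identity
$$\nu_v=D\bigl(i_{a_1}d_{va_2}+i_{a_2}d_{va_1}\bigr),$$
while (\ref{eq:mult}) gives $N_v=D\bigl(N_{a_1}d_{va_2}+N_{a_2}d_{va_1}\bigr)$. Meanwhile $2-\delta_v''+\sum_\ell d_\ell/i_\ell=-n+n=0$, so $B(s_0)=\sum_{j=1}^2 d_{va_j}/(i_{a_j}+s_0N_{a_j})$, and clearing denominators turns the desired vanishing into the linear identity $N_v(d_{va_1}i_{a_2}+d_{va_2}i_{a_1})=\nu_v(d_{va_1}N_{a_2}+d_{va_2}N_{a_1})$, which is manifest from the two displayed formulas.

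For $r=1$, substituting $i_\ell=d_\ell$ only for $\ell\le n-1$ and carrying out the analogous telescoping (again in the subcases $d_a:=d_{va}=1$ and $d_a\ge 2$) produces the uniform formula $\nu_v=i_aD+i_nDd_a/d_n$; since $N_v=N_aD$, one computes $i_a+s_0N_a=-i_nd_a/d_n$, whence $d_a/(i_a+s_0N_a)=-d_n/i_n$. The remaining terms of $B(s_0)$ satisfy $2-\delta_v''+\sum_{\ell\le n-1}d_\ell/d_\ell=(1-n)+(n-1)=0$, so $B(s_0)=d_n/i_n+d_a/(i_a+s_0N_a)=0$. The main obstacle is purely bookkeeping: the discrete parameters $\alpha_j$, which govern the valency $\delta_v$ in $\Gax$ and the status of each arrowhead as a generic fiber versus boundary vertex, must be tracked separately through every subcase; the upshot is that in every subcase the $(2-\delta_v)d_v$ term cancels exactly against the sum of boundary contributions, leaving the same clean formula for $\nu_v$, and the vanishing of $B(s_0)$ then becomes immediate.
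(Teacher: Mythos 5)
Your proof is correct and takes essentially the same route as the paper: both compute $\nu_v$ and $N_v$ from (\ref{eq:nu}) and (\ref{eq:mult}), plug in the allowedness hypothesis $i_\ell=d_\ell$ to collapse the boundary contributions, and check directly that the residue factor vanishes at $s_0=-\nu_v/N_v$. The only difference is presentational: you unpack the $\alpha_j$ case distinction (generic versus special Seifert fiber, i.e.\ $d_{va_j}=1$ versus $d_{va_j}\ge 2$) explicitly, whereas the paper asserts the uniform formulas (\ref{eq:nopole1}) and (\ref{eq:nopole3}) directly.
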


\begin{picture}(200,75)(145,-20)
\put(240,20){\circle*{4}}\put(210,35){\circle*{4}}\put(210,5){\circle*{4}}
\put(240,19){\vector(1,0){30}}
\put(240,20){\line(-2,-1){30}} \put(240,20){\line(-2, 1){30}}
\put(220,23){\makebox(0,0){$\vdots$}}
\put(240,-10){\makebox(0,0){$v$}}
\put(250,28){\makebox(0,0){$p_1$}}
 \put(233,30){\makebox(0,0){$d_1$}} \put(233,10){\makebox(0,0){$d_n$}}
\put(288,15){\makebox(0,0){\tiny{$(N_1)$}}}
\put(288,24){\makebox(0,0){\tiny{$k_1-1$}}}
\put(175,5){\makebox(0,0){\tiny{$i_n-1$}}}
\put(175,35){\makebox(0,0){\tiny{$i_1-1$}}}

\dashline[3]{3}(240,22)(265,22)\put(265,22){\vector(1,0){5}}
\dashline[3]{3}(210,35)(195,35)\put(195,35){\vector(-1,0){5}}
\dashline[3]{3}(210,5)(195,5)\put(195,5){\vector(-1,0){5}}

\put(440,20){\circle*{4}}\put(410,35){\circle*{4}}\put(410,5){\circle*{4}}
\put(440,19){\vector(2,1){30}}\put(440,19){\vector(2,-1){30}}
\put(440,20){\line(-2,-1){30}} \put(440,20){\line(-2, 1){30}}
\put(420,23){\makebox(0,0){$\vdots$}}
\put(440,-10){\makebox(0,0){$v$}}
\put(448,35){\makebox(0,0){$p_1$}}\put(448,8){\makebox(0,0){$p_2$}}
 \put(433,30){\makebox(0,0){$d_1$}} \put(433,10){\makebox(0,0){$d_n$}}
\put(488,34){\makebox(0,0){\tiny{$(N_1)$}}}
\put(488,42){\makebox(0,0){\tiny{$k_1-1$}}}
\put(488,-1){\makebox(0,0){\tiny{$(N_2)$}}}
\put(488,7){\makebox(0,0){\tiny{$k_2-1$}}}
\put(375,5){\makebox(0,0){\tiny{$i_n-1$}}}
\put(375,35){\makebox(0,0){\tiny{$i_1-1$}}}

\dashline[3]{3}(440,22)(465,35)\put(465,35){\vector(2,1){5}}
\dashline[3]{3}(440,22)(465,9)\put(465,9){\vector(2,-1){5}}
\dashline[3]{3}(410,35)(395,35)\put(395,35){\vector(-1,0){5}}
\dashline[3]{3}(410,5)(395,5)\put(395,5){\vector(-1,0){5}}
\end{picture}

\begin{proof}
We consider first the case $r=2$. By (\ref{eq:nu}) and
(\ref{eq:mult}) we have, with $D=\prod_\ell d_\ell$,
\begin{equation}\label{eq:nopole1}
\nu_v=\sum_{\ell=1}^n \frac{Dp_1p_2}{d_\ell}i_\ell -
nDp_1p_2+Dp_2k_1+Dp_1k_2 \quad\mbox{and}\quad N_v=
D(p_2N_1+p_1N_2),
\end{equation}
respectively. With our assumptions this simplifies to
\begin{equation}\label{eq:nopole2}
\nu_v= D(p_2k_1+p_1k_2) \quad\mbox{and}\quad N_v=
D(p_2N_1+p_1N_2).
\end{equation}
The residue of $-\frac{\nu_v}{N_v}$ is (up to a factor $N_v$)
$$
\Big( -n+\sum_{\ell=1}^n \frac{d_\ell}{i_\ell}+
\frac{p_1}{k_1+sN_1}+\frac{p_2}{k_2+sN_2} \Big)
\Big|_{(s=-\frac{\nu_v}{N_v})} = \frac{p_1}{k_1 -\frac{\nu_v}{N_v}
N_1}+\frac{p_2}{k_2 -\frac{\nu_v}{N_v} N_2}.
$$
This expression being zero is equivalent to
$\frac{\nu_v}{N_v}=\frac{p_2k_1+p_1k_2}{p_2N_1+p_1N_2}$, which
follows  from (\ref{eq:nopole2}).

When $r=1$, we have by (\ref{eq:nu}) and (\ref{eq:mult}) that
\begin{equation}\label{eq:nopole3}
\nu_v=\sum_{\ell=1}^n \frac{Dp_1}{d_\ell}i_\ell - (n-1)Dp_1+Dk_1
\quad\mbox{and}\quad N_v= DN_1,
\end{equation}
simplifying with our assumptions to
\begin{equation}\label{eq:nopole4}
\nu_v= D(\frac{p_1}{d_n}i_n + k_1) \quad\mbox{and}\quad N_v= DN_1.
\end{equation}
Thus $\frac{\nu_v}{N_v}\neq\frac{k_1}{N_1}$ (since $i_n\not=0$)
and $-\frac{\nu_v}{N_v}$ is not a pole of order 2. The fact that
its residue is zero is a similar easy computation as above.
\end{proof}

\begin{theorem}\labelpar{thm:monconj} Let $(X,0)$ be an IHS germ,
and $f$ an analytic function on $X$. Let $W$ be an arbitrary
allowed divisor for $(X,f)$. If $s_0$ is a pole of the topological
zeta function $Z(f, W; s)$, then $\exp(2\pi is_0)$ is a monodromy
eigenvalue of $f$ at some point of $\{f=0\}$ (in one of the homology groups).
\end{theorem}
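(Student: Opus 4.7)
The plan is to prove Theorem~\ref{thm:monconj} by a two-step reduction: first use the splice decomposition of Theorem~\ref{prop:splicezeta} to localize the pole $s_0$ of $Z(\Gamma;s)$, with $\Gamma := \Gamma_\pi(X,f,W)$, inside a star-shaped subdiagram $S$; then handle the star-shaped case through A'Campo's formula (\ref{eq:zeta}) combined with Lemma~\ref{le:nopole}.

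\emph{Localization via splicing.} Inspecting Definition~\ref{I:zetadiagram}, the special-edge term $q_e/[(\nu_v+sN_v)(\nu_w+sN_w)]$ has its poles only at $-\nu_v/N_v$ and $-\nu_w/N_w$ for the endpoints of $e$; hence every pole of $Z(\Gamma)$ has the form $s_0 = -\nu_v/N_v$ for some node $v$, and at least one such $v$ contributes non-trivially to the residue. Fix such a $v$ and iteratively splice $\Gamma$ along special edges not adjacent to $v$: Lemma~\ref{lemma:nonzero} preserves allowedness at every step, and Theorem~\ref{prop:splicezeta}(2)--(3) preserves both the pole order at $s_0$ and the $v$-contribution to the residue. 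We reach a star-shaped piece $S$ whose central node $v$ still has non-zero $v$-residue at $s_0$ in $Z(S)$.

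\emph{Star-shaped case via A'Campo.} Let $r$ be the number of ordinary ($F$-)arrowheads at $v$ in $S$ (counting both the original ones in $S$ and those created by splicing), and $n$ the number of boundary vertices. From (\ref{eq:mult}) one computes $N_{w_j} = N_v/d_j$ for the boundary vertex $w_j$ joined to $v$ by an edge of weight $d_j$; from (\ref{eq:nu}) together with the pairwise coprimality (\ref{eq:compcon}) of the $d_\ell$'s and their coprimality with the $F$-edge weights, one gets $d_j \mid \nu_v$ if and only if $d_j \mid i_j$. Formula (\ref{eq:zeta}) applied to $S$ yields
\[
\zeta_S(t) \;=\; (t^{N_v}-1)^{r+n-2}\prod_{j=1}^{n}(t^{N_{w_j}}-1)^{-1},
\]
so $\alpha := \exp(2\pi is_0) = \exp(-2\pi i\nu_v/N_v)$ has multiplicity $m := (r+n-2) - \#\{j : d_j \mid i_j\}$ as a root of $\zeta_S$. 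If $r \geq 3$ then $m \geq (r+n-2)-n \geq 1$, so $\alpha$ is a root of $\Lambda_S$. If $r \in \{1,2\}$ and $m \leq 0$, then $\#\{j : d_j \mid i_j\} \geq n+r-2$; by the unified allowedness formulation of Remark~\ref{rem:allowed}(c), this forces $i_\ell = d_\ell$ for at least $n+r-2$ indexes, which after relabeling matches the hypothesis of Lemma~\ref{le:nopole}, and that lemma then yields vanishing $v$-residue at $s_0$ in $Z(S)$, contradicting the choice of $v$. Hence $\alpha$ is always a root of $\Lambda_S$, and by iterating Proposition~\ref{prop:alex} upward through the splice decomposition, $\Lambda_\Gamma$ is a product of the $\Lambda$'s over all star-shaped pieces, so $\alpha$ is also a root of $\Lambda_\Gamma$, i.e.\ a monodromy eigenvalue of $f$ at some point of $\{f=0\}$.

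\emph{Main obstacle.} The most delicate point is the bookkeeping in the localization step when several nodes share the same candidate ratio $-\nu_v/N_v$ and their individual contributions could in principle cancel: the residue-preservation statement in Theorem~\ref{prop:splicezeta}(3) is exactly what lets us isolate a single surviving node after iterated splicing. The rest of the argument is essentially a combinatorial verification, the key point being the matching ``$d_j\mid\nu_v \Leftrightarrow d_j\mid i_j$'' between the divisibility controlling the boundary-vertex cancellation in A'Campo's formula and the divisibility governing allowedness in Definition~\ref{def:alloweddiagram}(2).
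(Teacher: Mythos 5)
Your overall strategy (localize to a star-shaped piece via the splice formula, then use A'Campo's formula together with the allowedness condition and Lemma~\ref{le:nopole}) is the paper's strategy. There are, however, two concrete gaps.

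\emph{First gap: poles at $-i_a/N_a$.} Your opening claim that every pole of $Z(\Gamma;s)$ has the form $-\nu_v/N_v$ for some node $v$ is false. You inspected only the special-edge term, but the node terms in Definition~\ref{I:zetadiagram} contain factors
$\frac{1}{\nu_v+sN_v}\cdot\frac{d_{va}}{i_a+sN_a}$,
which introduce candidate poles at $-i_a/N_a$ for each arrowhead $a\in\calA_F$. These are in general distinct from every $-\nu_w/N_w$, in which case your localization never begins: there is no node to splice around. The paper treats this as its case (1): when $s_0=-i_a/N_a$ for a component $F_a$, then $\exp(2\pi is_0)$ is automatically an $N_a$-th root of unity, hence a monodromy eigenvalue of $f$ at a point of $F_a$ close to $0$ — this is precisely why the theorem's conclusion reads \lq\lq at some point of $\{f=0\}$\rq\rq\ rather than at the origin only. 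You must add this case before invoking splicing.

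\emph{Second gap: the double-pole case with $r=2$.} When $r=2$ and $m\leq 0$, you invoke Lemma~\ref{le:nopole} after deducing $i_\ell=d_\ell$ for all $\ell$. But the $r=2$ statement of Lemma~\ref{le:nopole} carries an explicit extra hypothesis: that $-\nu_v/N_v$ is not a pole of order $2$ of $Z(S)$, i.e.\ $\nu_v/N_v\neq k_j/N_j$ for both arrowheads. If $s_0$ is a pole of order $2$ of $Z(\Gamma)$, then by Theorem~\ref{prop:splicezeta}(2) it remains a pole of order $2$ in $Z(S)$, so the hypothesis fails and your contradiction is unavailable. (Relatedly, Theorem~\ref{prop:splicezeta}(3), which you use to transport the \lq\lq$v$-residue\rq\rq\ through the splicing, only applies when $s_0$ is not a pole of order $2$.) The paper separates this out as its case (3): one considers the connected subgraph $C$ of nodes sharing the ratio $s_0$, picks an \emph{extreme} node $v$ of $C$ (so that $\nu_v/N_v=k_1/N_1\neq k_2/N_2$), and shows by direct computation that if $\exp(2\pi is_0)$ were not a root of $\Lambda_{S_v}(t)$ then $k_1/N_1=k_2/N_2$, contradicting extremity; for $r=1$ with a double pole, it shows this situation cannot even arise (it would force $i_1=0$). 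Your $r\geq 3$ analysis goes through unchanged in the double-pole case, but $r\in\{1,2\}$ must be reorganized along these lines. A minor remark: \lq\lq iteratively splice along special edges \emph{not} adjacent to $v$\rq\rq\ should read \lq\lq adjacent to $v$\rq\rq\ (or simply \lq\lq along special edges until the piece containing $v$ is star-shaped\rq\rq).
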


\begin{proof} Fix an embedded resolution $\pi$ of $f$ such that
$W$ is allowed for it. We use the usual notation associated to
$\Gamma:=\Gamma_\pi(X,f,W)$. We consider three subcases.

\smallskip
(1) {\it There is a component $F_a$ of $\{f=0\}$ such that
$s_0=-\frac{i_a}{N_a}$.} Then $\exp(2\pi is_0)$ is an eigenvalue
of $f$ at a point of $F_a$  close to $0$ (since all $N_a$-th roots
of unity are eigenvalues at such a point).

\smallskip
(2) {\it There is no $F_a$ as in (1) and let $s_0$ be  a pole of order
$1$ of  $Z(f, W; s)$.} Then $s_0=-\frac{\nu_v}{N_v}$ for some node $v$, such that the
contribution of $v$ to the residue of $Z(f, W; s)$ at $s_0$ is
non-zero. Consider after repeated splicing the induced
star-shaped diagram  $S$ around $v$; by Proposition
(\ref{prop:splicezeta}) we know that the residue of $Z(S)$ at $s_0$
is exactly this contribution.

\begin{picture}(200,80)(265,-20)
\put(440,20){\circle*{4}}\put(410,35){\circle*{4}}\put(410,5){\circle*{4}}
\put(440,19){\vector(2,1){30}}\put(440,19){\vector(2,-1){30}}
\put(440,20){\line(-2,-1){30}} \put(440,20){\line(-2, 1){30}}
\put(460,23){\makebox(0,0){$\vdots$}}\put(420,23){\makebox(0,0){$\vdots$}}
\put(440,-10){\makebox(0,0){$v$}}
\put(448,35){\makebox(0,0){$p_1$}}\put(448,8){\makebox(0,0){$p_r$}}
 \put(433,30){\makebox(0,0){$d_1$}} \put(433,10){\makebox(0,0){$d_n$}}
\put(488,34){\makebox(0,0){\tiny{$(N_1)$}}}
\put(488,42){\makebox(0,0){\tiny{$k_1-1$}}}
\put(488,-1){\makebox(0,0){\tiny{$(N_r)$}}}
\put(488,7){\makebox(0,0){\tiny{$k_r-1$}}}
\put(375,5){\makebox(0,0){\tiny{$i_n-1$}}}
\put(375,35){\makebox(0,0){\tiny{$i_1-1$}}}
\put(560,20){\makebox(0,0){($r$ arrowheads)}}
\dashline[3]{3}(440,22)(465,35)\put(465,35){\vector(2,1){5}}
\dashline[3]{3}(440,22)(465,9)\put(465,9){\vector(2,-1){5}}
\dashline[3]{3}(410,35)(395,35)\put(395,35){\vector(-1,0){5}}
\dashline[3]{3}(410,5)(395,5)\put(395,5){\vector(-1,0){5}}
\end{picture}

From (\ref{ss:splicingalex}) we compute
\begin{equation}\label{eq:zeta1}
\zeta_S(t)= \frac{(t^{N_v}-1)^{r+n-2}}{\prod_{\ell=1}^n
(t^{N_v/d_\ell}-1)},
\end{equation}
and by (\ref{eq:nu}), denoting $D:=\prod_{\ell=1}^n d_\ell$, we
have
%\marginpar{(...)}
\begin{equation}\label{eq:nu1}
\nu_v=\sum_{\ell=1}^n \frac D{d_\ell}(\prod_{j=1}^r p_j)i_\ell +
D\cdot (\mbox{some integer}).
\end{equation}

We distinguish three possibilities for $r$ in order to show that
$\exp(2\pi is_0)$ is always a root of $\Lambda_S(t)$.

\vspace{2mm}

$\bullet$   $\mathbf{(r\geq 3)}$ \ Then, via (\ref{eq:zeta1}),
 $\exp(2\pi is_0)$ is clearly a root of
$\Lambda_S(t)=\zeta_S(t)$.

\smallskip
$\bullet$  $\mathbf{(r=2)}$ \ Suppose that $\exp(2\pi is_0)$ is
not a  root of $\Lambda_S(t)=\zeta_S(t)$. Then, by
(\ref{eq:zeta1}), $\frac{\nu_v}{d_\ell}$ must be an integer for
all $\ell=1,\dots,n$. By (\ref{eq:nu1}) this is equivalent to
$d_\ell|i_\ell$ for all these $\ell$. By allowedness we conclude
that then $i_\ell=d_\ell$ for all $\ell$. Lemma \ref{le:nopole}
then contradicts that the residue at $s_0$ is non-zero.

\smallskip
$\bullet$  $\mathbf{(r=1)}$ \
 Suppose that $\exp(2\pi is_0)$ is not a root of
$\Lambda_S(t)=\Delta_S(t)$. Then, analogously,
 for at least $n-1$ of
the numbers $i_1,\dots,i_n$ we have that $\frac{\nu_v}{d_\ell}$
must be an integer, or, equivalently, $d_\ell|i_\ell$. Then by
allowedness  $i_\ell=d_\ell$ for $n-1$ of these numbers, and again
Lemma \ref{le:nopole}  contradicts that the residue at $s_0$ is
non-zero.

\smallskip
We conclude that $\exp(2\pi is_0)$ is indeed always a root of
$\Lambda_S(t)$,  hence of  $\Lambda_\Gamma(t)$, and thus that
$\exp(2\pi is_0)$ is a monodromy eigenvalue of $f$ at $0$.

\smallskip
(3) {\it There is no $F_a$ as in (1) and let $s_0$ be a pole of order
$2$.} This implies that $s_0=-\frac{\nu_v}{N_v}=-\frac{\nu_w}{N_w}$
for two nodes $v$ and $w$, connected by a special edge $e$.
Equivalently, cf. (\ref{prop:splicezeta}),
\begin{equation}\label{eq:s0}
s_0=-\frac{\nu_v}{N_v}=-\frac{k}{N}\end{equation}
 for the
central node $v$ and an arrowhead with decorations
$(N_a,i_a)=(N,k)$ in some star-shaped spliced subdiagram of
$\Gamma$.

Let $C$ be the connected part of $\Gamma$ containing $v$ and
consisting of nodes $w$ with $s_0=-\frac{\nu_w}{N_w}$ only. Either
at least one node $w$ in $C$ has at least three attached
(ordinary) arrowheads in the induced star-shaped subgraph $S_w$
after splicing, and then $\exp(2\pi is_0)$ is a root of
$\Lambda_{S_w}(t)=\zeta_{S_w}(t)$; or {\it all} nodes $w$ in $C$
have exactly one or two attached arrowheads in $S_w$. So we are left with
 this second possibility. We distinguish two subcases for
such a node $v$. For the two diagrams and notations, see just
before the proof of Lemma \ref{le:nopole}.

\smallskip
$\bullet$  $\mathbf{(r=1)}$ \
 Using (\ref{eq:mult})  and (\ref{eq:nu}),
see also (\ref{eq:nopole3}), we have from (\ref{eq:s0}) that
$$
\frac{\nu_v}{N_v}=\frac{\sum_{\ell=1}^n \frac{Dp_1}{d_\ell}i_\ell
- (n-1)Dp_1+Dk_1}{DN_1}  =\frac{k_1}{N_1}.
$$
This simplifies to $\sum_{\ell=1}^n \frac{D}{d_\ell}i_\ell -
(n-1)D=0$, implying that $d_\ell|i_\ell$ for all $\ell=1,\dots,n$.
By allowedness we then have $i_\ell=d_\ell$ for say
$\ell=2,\dots,n$. Hence the previous equality reduces to $i_1=0$,
contradicting that $W$ is allowed. Hence this case cannot occur.

\smallskip
$\bullet$  $\mathbf{(r=2)}$ \
 We may assume also that $v$ is an \lq
extremity\rq\ of $C$, that is, that $v$ is only connected to one
other node of $C$, say $\nu_v/N_v = k_1/N_1\not=k_2/N_2$.  Now
(\ref{eq:mult}) and (\ref{eq:nu}), see also (\ref{eq:nopole1}),
together with (\ref{eq:s0}) yield
$$
\frac{\nu_v}{N_v}=\frac{\sum_{\ell=1}^n
\frac{Dp_1p_2}{d_\ell}i_\ell -
nDp_1p_2+Dp_2k_1+Dp_1k_2}{D(p_2N_1+p_1N_2)} =\frac{k_1}{N_1}.
$$
{\it Suppose} that $\exp(2\pi is_0)$ is {\it not} a root of
$\Lambda_{S_v}(t)$. By the same arguments as in case (2)
we get that  $d_\ell|i_\ell$ for all $\ell=1,\dots,n$. By allowedness
we now have that $i_\ell=d_\ell$ for all $\ell$, and then the
previous equality reduces to
$$
\frac{p_2k_1+p_1k_2}{p_2N_1+p_1N_2} =\frac{k_1}{N_1}.
$$
This is equivalent to $\frac{k_1}{N_1}=\frac{k_2}{N_2} (=-s_0)$,
contradicting that $v$ is an extremity of $C$.

We conclude that $\exp(2\pi is_0)$ is a root of $\Lambda_{S_v}(t)$
for some node $v$, and hence also of $\Lambda_\Gamma(t)$.
\end{proof}

\begin{remark}\labelpar{re:F} Although we formulated the previous
Theorem~\ref{thm:monconj} for an analytic function $f$, it has a
purely combinatorial version (with  the same proof) valid for
diagrams.

 Start with a diagram $\Gamma(F)$ and set the possible `eigenvalues of the monodromies
 at different points and in different homologies':
 \begin{equation*}
 Eig:=\{\lambda\,:\, \Delta_1(\lambda)=0\}\cup
 \bigcup_{a\in\calA_F}\{\lambda\,:\, \lambda^{N_a}=1\}.
 \end{equation*}
 Then, for any allowed $W$ and pole $s_0$ of the zeta function
 $Z(F,W;s)$, we have that $\exp(2\pi is_0)$ belongs to $Eig$.
\end{remark}

\begin{example}\labelpar{ex:EXAMPLE5}
Let us continue the main Example~\ref{ex:EXAMPLE4} further.

 Using (\ref{ex:EXAMPLE3}) we get that the eigenvalues are the roots of $(t-1)(t^2-t+1)$.
Theorem~\ref{thm:monconj} says that if $W$ is allowed and $s_0$ is
a pole of $Z(s)$, then $\exp(2\pi is_0)$ is 1 or a primitive 6-th
root of unity.

(a) First we show  that we can find easily non--allowed forms $W$
such that the corresponding pole will not provide an eigenvalue.
This proves that some kind of restriction regarding the divisors
$W$ is necessary.

Consider in (\ref{ex:EXAMPLE}) a general form $W$, not necessarily
allowed. Then
$\nu_1=-13+21(i_1-1)+14(i_2-1)+18(i_1'-1)+12(i_2'-1)$, which is
congruent with $3i_1+2i_2$ modulo $N_{v_1}=6$. We wish to get, for
example, $\nu_1\equiv 3 \ (\!\!\mod 6)$, hence we might take $i_1=1$
and $i_2=6$. Assume also that $i_1'=i_2'=1$. Then $\nu_1=57$,
$\nu_0=8$ and $\nu_1'=47$. Then computing the zeta function we
realize that $s_0=-57/6$ is a pole, but $\exp(2\pi i
s_0)=-1\not\in Eig$.

(b) It is not hard to find divisors $W$ which are not allowed, but
such that nevertheless their poles provide only eigenvalues. Take
for example the trivial form $W=0$.  Then by (\ref{ex:EXAMPLE4})
it is not allowed, but using the expression for $Z(s)$ from
(\ref{ex:EXAMPLE}) we can conclude that the poles provide
eigenvalues.

(c)  In order to emphasize the subtlety of the statement of
Theorem~\ref{thm:monconj} (and of its proof) we `will try to find
a counterexample' of this fact. Namely, let us take $i_1=1$,
$i_2=3$ and write $I'$ for the expression  $3i_1'+2i_2'$. Note
that $W$ is allowed exactly when $I'=7$, cf. (\ref{ex:EXAMPLE4}).

By a computation $\nu_1=6I'-15$, $\nu_0=I'-3$ and $\nu'_1=7I'-24$.
Moreover, the zeta function $Z(s)$ is
$$\frac{2}{6I'-15+6s}+\frac{-1+I'/(i_1'i_2')}{7I'-24+6s}
+\frac{1}{I'-3+s}\big(-1+\frac{1}{s+1}+\frac{1}{6I'-15+6s}+\frac{1}{7I'-24+6s}\Big).$$
 In particular, $s_0=(15-6I')/6$ is a candidate pole of $Z(s)$, such that $\exp(2\pi is_0)=-1\not\in Eig$.

The point is that, for any $I'$, the residue of this candidate pole is zero,
that is, this is a fake candidate pole, not a pole. In particular,
for $I'=7$ we get no contradiction, and for some other special
choices of $I'$ we get plenty of non--allowed forms for which all
the poles provide eigenvalues. E.g.,
 for $I'\equiv 0\ (\!\!\mod 6)$ all the poles are integers.

(d) Let us consider the allowed form as in (c) given by $i_1=1$,
$i_2=3$ and $I'=7$. Then  $Z$ has a pole $s_0$ with $\exp(2\pi
is_0)=\exp(-2\pi i/6)$.

The other  root $\exp(2\pi i/6)$ of $t^2-t+1$ can be realized by
the following allowed form. Consider $i_1=1$, $i_2=1$ and $I'=7$.
Then $\nu_1=-1$, $\nu_0=0$, $\nu_1'=1$ and
$$Z(s)=\frac{4}{6s-1}-\frac{1}{s+1}+\frac{1}{6s+1}
\Big(-1+\frac{7}{i_1'i_2'}\Big)+\frac{12}{(6s-1)(6s+1)}.$$
 One immediately verifies that all candidate poles are indeed
poles. Hence, in fact, the poles of this unique zeta function hit
all eigenvalues of $Eig$.

\end{example}

\section{Plane curves}\labelpar{s:5}

In this section we treat the case when $(X,0)$ is smooth, that is, $f$ is a plane curve
singularity. We will fix some local coordinates $(x,y)$ of $(X,0)$.

Although the results of the section \ref{s:7} generalize some of
the statements of the present section, we prefer to provide some
details in this particular case too, since some of the (much
shorter) arguments might be of interest for specialists of plane
curve germs. Moreover, we also show how the classical situation
($W'=0$) is included in our general treatment.

There is another reason to separate the plane curve case. The
proof of the abundance of the allowed forms for general $(X,0)$
(which allows to realize {\it all} the monodromy eigenvalues) will
be proved under a technical  assumption regarding $\Gamma(F)$
(namely, the semigroup condition). Although this condition is
satisfied by splice diagrams of plane curve singularities, cf.
Remark \ref{re:1234}(1), it is natural to see how the case of
plane curves runs independently of this condition, just using
their standard properties.

\subsection{} In this subsection we verify that the standard form is allowed,
and hence we reprove the `classical monodromy conjecture' for
plane curve singularities.

\begin{proposition} \labelpar{prop:standard}
The standard differential form $dx\wedge dy$ (corresponding to the divisor
$W'=0$ on $X$) is allowed for any (plane) curve singularity  $f$ on $(X,0)=(\C^2,0)$.
\end{proposition}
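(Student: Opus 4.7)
The plan is to verify Definition \ref{def:alloweddiagram}(3) pointwise: for $W=0$ on $\Gamma=\Gamma_\pi(X,f)$, every star-shaped subdiagram $S_v$ obtained by splicing off every special edge incident to a node $v$ must satisfy condition (2). Fix such a $v$; write the boundary vertices of $S_v$ as $w_1,\dots,w_n$ with edge weights $d_1,\dots,d_n$ and dashed-arrow decorations $i_\ell-1$, and let $r$ be the number of attached arrowheads. The conditions of \eqref{allow.2} are vacuous unless $r\in\{1,2\}$, so I restrict to these cases.

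The boundary vertices of $S_v$ split into two kinds. Those already present in $\Gamma$ keep $i_\ell = 1$ since $W = 0$: the hypothesis $d_\ell\mid 1$ forces $d_\ell = 1$, and then $i_\ell = d_\ell$ holds automatically, so these indices contribute to both the hypothesis and the conclusion of \eqref{allow.2}. The substance of the proof is concentrated in the new boundary vertices created by splicing off a subgraph $\Gamma^{(e)}$ along a special edge $e$ at $v$. At such a $w_\ell$, formula \eqref{eq:totali} reduces (because $\calA_{W,R}=\emptyset$) to the purely combinatorial
$$i_\ell \;=\; \sum_{u\in\bar{\calV}(\Gamma^{(e)})}(2-\delta_u)\,\ell_{e,u},$$
depending only on the subdiagram $\Gamma^{(e)}$.

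The key claim to establish is that, for a plane-curve splice diagram, $d_\ell\mid i_\ell$ at such a new boundary vertex already forces $i_\ell=d_\ell$. Granted this, condition \eqref{allow.2} follows in both $r=1$ and $r=2$: every index verifying the hypothesis also verifies the conclusion, so in particular the required number of indices do so. My plan for the claim is to combine three plane-curve-specific structural features: (i) the pairwise coprimality of edge weights at the adjacent node $v^\ast$ on the far side of $e$, from \eqref{eq:compcon}(b), which controls $\ell_{e,u}\bmod d_\ell$; (ii) the positivity of the edge determinant \eqref{eq:edgedet} of $e$, which prevents the ``accidental'' numerical coincidences; and (iii) the semigroup condition, valid at every node–edge pair of a plane-curve splice diagram by Remark~\ref{re:1234}(1) and inherited by every subdiagram $\Gamma^{(e)}$. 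Reducing $i_\ell$ modulo $d_\ell$ using (i) and arguing inductively on the number of nodes of $\Gamma^{(e)}$ using (ii)–(iii), the divisibility hypothesis is forced to collapse to the rigid equality $i_\ell = d_\ell$.

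The main obstacle I anticipate is precisely this modular reduction. The sum defining $i_\ell$ is unwieldy and the target equality $i_\ell=d_\ell$ is tight; the delicate point is to turn the abstract semigroup condition into a concrete congruence, uniform across all subdiagrams produced by iterated splicing of the plane-curve graph, and robust enough to handle the two arithmetic regimes $r=1$ and $r=2$ together. Once the key claim is in hand, the allowedness of the divisor $W=0$ -- equivalently of the standard form $dx\wedge dy$ -- follows, and combined with Theorem~\ref{thm:monconj} it recovers the classical monodromy conjecture for plane curve germs.
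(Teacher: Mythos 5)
The structural framework you set up is correct and matches the paper's: verify allowedness on each spliced star-shaped subdiagram, and split boundary vertices into original ones (where $i_\ell=1$) and new ones created by splicing. But your key claim for the new boundary vertices is both unproven and, more seriously, misaimed.

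You assert that $d_\ell\mid i_\ell$ at a new boundary vertex \emph{forces} $i_\ell=d_\ell$, and you plan to establish this by a modular reduction combining coprimality, edge-determinant positivity, and the semigroup condition. But the actual situation is quite different: in the paper's proof of this proposition, one shows by a short induction (using only positivity of edge determinants) that the relevant decorations satisfy the tight two-sided bound $-d_\ell<i_\ell<0$. Consequently $i_\ell$ is strictly negative and strictly smaller in absolute value than $d_\ell$, so $d_\ell\nmid i_\ell$ \emph{always}: the divisibility hypothesis of Definition~\ref{def:alloweddiagram}(\ref{allow.2}) simply never fires at a new boundary vertex. In particular $i_\ell=d_\ell>0$ never holds. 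Your proposed ``modular reduction collapsing to equality'' therefore targets an outcome that cannot occur, and if you actually executed the reduction you would be forced to conclude impossibility of the hypothesis, not equality of $i_\ell$ and $d_\ell$. (Your implication is vacuously true, but your plan for proving it is premised on the wrong mechanism.) This is the concrete missing step, and it is not close in your sketch: you anticipate the difficulty yourself, and the route you describe does not lead to the lemma you need.

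A secondary point: the paper's Proposition~\ref{prop:standard} is stated in Section~\ref{s:5} precisely to show that the plane-curve case can be handled \emph{without} the semigroup condition, which the paper reserves for the general IHS setting of Section~\ref{s:7} (Proposition~\ref{prop:div} and Theorem~\ref{thm:W}). Your route through the semigroup condition (via Remark~\ref{re:1234}(1)) would amount to rederiving the plane-curve case as a corollary of Theorem~\ref{thm:W}; that is logically legitimate but heavier, and still requires the negative bound on $i_\ell$ (Proposition~\ref{prop:div}(1)) plus the semigroup argument (Proposition~\ref{prop:div}(2)), neither of which your sketch supplies. For the plane-curve case the intended argument is the direct induction $j_2=p_1+a_1-a_1p_1$, $j_{k+1}=p_kj_k+a_k-a_kp_k$, giving $j_k<0$ and $|p_k j_k|<a_k$ at each step; I'd suggest you work that bound out explicitly and then observe that it makes the allowedness hypotheses vacuous.
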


\begin{proof}  We use the minimal embedded resolution $\pi:\tilde{X}\to X$ of
$f$ and show that the diagram
$\Gamma:=%\Gamma(F=\div(\pi^*f),W=K_\pi) =
\Gamma_\pi(X,F=\div(f),W=0)$ is allowed. Note that thus the
decorations $i_a-1=0$ for all $a\in \calA_W$.

Recall that on any star-shaped subdiagram of $\Gamma$ without
boundary vertex the allowedness condition is trivially satisfied.
If a star-shaped subdiagram contains exactly one boundary vertex,
which by assumption is a boundary vertex of $\Gamma$ too (before
the splice operation), then the corresponding leg decoration
(being $>1$) does not divide the associated $i_a (=1)$, hence the
allowedness condition is satisfied again.

It is well known that other star-shaped subdiagrams can arise from
at most one connected part of $\Gamma$, that has the following
form.

\begin{picture}(400,65)(0,0)
%\put(20,40){\circle*{4}}
\put(70,40){\circle*{4}}
\put(120,40){\circle*{4}} \put(170,40){\circle*{4}}
\put(250,40){\circle*{4}} \put(300,40){\circle*{4}}
%\put(70,10){\circle*{4}}
 \put(120,10){\circle*{4}}
\put(170,10){\circle*{4}} \put(250,10){\circle*{4}}
\put(300,10){\circle*{4}}

\put(70,40){\line(1,0){120}}\put(230,40){\line(1,0){70}}
\put(300,40){\line(2,1){30}}\put(300,40){\line(2,-1){30}}
%\put(70,40){\line(0,-1){30}}
\put(170,40){\line(0,-1){30}}
\put(120,40){\line(0,-1){30}}
\put(250,40){\line(0,-1){30}}
\put(300,40){\line(0,-1){30}}

%\put(58,45){\makebox(0,0){$a_1$}}
\put(108,45){\makebox(0,0){$a_1$}}
\put(158,45){\makebox(0,0){$a_2$}}
\put(233,45){\makebox(0,0){$a_{r-1}$}}
\put(288,45){\makebox(0,0){$a_r$}}

%\put(72,30){\makebox(0,0)[l]{$p_1$}}
\put(122,30){\makebox(0,0)[l]{$p_1$}}
\put(172,30){\makebox(0,0)[l]{$p_2$}}
\put(252,30){\makebox(0,0)[l]{$p_{r-1}$}}
\put(302,30){\makebox(0,0)[l]{$p_r$}}

\put(208,40){\makebox(0,0){$\ldots$}}
\put(318,43){\makebox(0,0){$\vdots$}}
%\put(70,55){\makebox(0,0){$v_1$}}
\put(120,55){\makebox(0,0){$v_1$}}
\put(170,55){\makebox(0,0){$v_2$}}
\put(250,55){\makebox(0,0){$v_{r-1}$}}
\put(300,55){\makebox(0,0){$v_r$}}

\end{picture}

This has the following type of splice sub--diagrams (where $2\leq k\leq r-1$):

\begin{picture}(400,65)(0,0)
\put(20,40){\circle*{4}}
\put(70,40){\circle*{4}}
\put(120,40){\circle*{4}}
\put(170,40){\circle*{4}}
\put(250,40){\circle*{4}} \put(300,40){\circle*{4}}
\put(70,10){\circle*{4}}
% \put(120,10){\circle*{4}}
\put(170,10){\circle*{4}} %\put(250,10){\circle*{4}}
\put(300,10){\circle*{4}}

\put(20,40){\vector(1,0){70}}\put(250,40){\line(1,0){50}}
\put(300,40){\line(2,1){30}}\put(300,40){\line(2,-1){30}}
\put(70,40){\line(0,-1){30}}
\put(170,40){\line(0,-1){30}}
%\put(120,40){\line(0,-1){30}}
%\put(250,40){\line(0,-1){30}}
\put(300,40){\line(0,-1){30}}
\put(120,40){\vector(1,0){70}}

\put(58,45){\makebox(0,0){$a_1$}}
%\put(108,45){\makebox(0,0){$a_1$}}
\put(158,45){\makebox(0,0){$a_k$}}
%\put(233,45){\makebox(0,0){$a_{r-1}$}}
\put(288,45){\makebox(0,0){$a_r$}}

\put(72,30){\makebox(0,0)[l]{$p_1$}}

\put(172,30){\makebox(0,0)[l]{$p_k$}}
%\put(252,30){\makebox(0,0)[l]{$p_{r-1}$}}
\put(302,30){\makebox(0,0)[l]{$p_r$}}

%\put(208,40){\makebox(0,0){$\ldots$}}
\put(318,43){\makebox(0,0){$\vdots$}}

\put(120,10){\makebox(0,0){$j_k-1$}}
\dashline[3]{3}(120,40)(120,20)\put(120,20){\vector(0,-1){5}}
\put(250,10){\makebox(0,0){$j_r-1$}}
\dashline[3]{3}(250,40)(250,20)\put(250,20){\vector(0,-1){5}}

\dashline[3]{3}(70,43)(85,43)\put(85,43){\vector(1,0){5}}
\dashline[3]{3}(170,43)(185,43)\put(185,43){\vector(1,0){5}}
\end{picture}

Here the leg with decoration $p_r$ is optional. If it does not
occur, we put formally $p_r=1$. When $r=1$ it must occur,
otherwise we are in the situation discussed just before.  We will
use the positivity of the edge determinants, saying in this case
that $a_k> a_{k-1}p_{k-1}p_k$ for $k=2,\dots,r$.

For the first diagram the allowedness is automatically satisfied.

In the spliced star-shaped subdiagrams around the vertices $v_k,
2\leq k \leq r,$ it is a priori possible that $a_k | j_k$ for some
$k$. If furthermore $j_k \neq a_k$, the allowedness condition
would be violated. We will show however that $-a_k<j_k<0$ for
$k=2,\dots,r$, and this will finish the proof.

\smallskip
 More precisely, we verify by induction that $j_k<0$ and
$|j_k| < a_k/p_k$ for $k=2,\dots,r$.  First by (\ref{eq:i}) we
have that $j_2= p_1+a_1-a_1p_1$. Hence $j_2<0$ and $|j_2|< a_1p_1
< a_2/p_2$.

Take now $k\in\{2,\dots,r-1\}$. In this case (\ref{eq:totali})
yields
$$
j_{k+1} = p_kj_k + a_k - a_kp_k.
$$
The induction hypothesis says that $j_k<0$ and $|p_kj_k|<a_k$.
Consequently also $j_{k+1}<0$ and $|j_{k+1}|<a_kp_k <
a_{k+1}/p_{k+1}$.
\end{proof}

This together with Theorem~\ref{thm:monconj} give an alternative
proof of the classical monodromy conjecture  for curves (see
\cite{Lo2} for the original proof of a stronger result in the
context of $p$-adic zeta functions, and \cite{Ro} for a direct
proof).

\begin{corollary} \labelpar{cor:monconj} For any plane curve singularity $f$,
 if $s_0$ is a pole of the topological
zeta function $Z(f; s)$, then $\exp(2\pi is_0)$ is a monodromy
eigenvalue of $f$ at some point of $\{f=0\}$.
\end{corollary}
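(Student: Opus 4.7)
The plan is to obtain this corollary as an immediate combination of Proposition \ref{prop:standard} and Theorem \ref{thm:monconj}, which are the two ingredients that have just been assembled.

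First I would observe that the \emph{classical} topological zeta function $Z(f;s)$ of a plane curve germ $f:(\C^2,0)\to (\C,0)$ coincides, under Definition \ref{I:1}, with the generalized topological zeta function $Z(F,W;s)$ for $F=\div(f)$ and $W=0$ (i.e.\ $\omega=dx\wedge dy$). Indeed, on the smooth ambient space the relative canonical divisor $K_\pi$ gives exactly the numerical data $\nu_i-1$ attached to the standard form, and the multiplicities $N_i$ of $\pi^*f$ are the usual ones; no exceptional component is added to the support of $W$. Thus any pole $s_0$ of the classical $Z(f;s)$ is a pole of $Z(f,W=0;s)$ in the sense of Section \ref{s:2}.

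Next I would invoke Proposition \ref{prop:standard}, which asserts precisely that the standard form $dx\wedge dy$, i.e.\ the $P$-divisor $W=0$, is allowed for the diagram $\Gamma_\pi(X,\div(f),0)$ obtained from the minimal embedded resolution. Hence $(f,W=0)$ lies in the scope of Theorem \ref{thm:monconj}.

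Finally, Theorem \ref{thm:monconj} applied to the IHS germ $(X,0)=(\C^2,0)$, the function $f$, and the allowed divisor $W=0$ yields that $\exp(2\pi is_0)$ is a monodromy eigenvalue of $f$ at some point of $\{f=0\}$. This is exactly the statement of Corollary \ref{cor:monconj}. There is essentially no obstacle: the corollary is a formal consequence, and the only subtle point to emphasize is the identification of the classical zeta function with $Z(f,0;s)$, so that the hypothesis of Theorem \ref{thm:monconj} is verified with $W=0$.
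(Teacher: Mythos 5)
Your proposal is exactly the paper's argument: identify the classical $Z(f;s)$ with $Z(\div(f),W=0;s)$, cite Proposition~\ref{prop:standard} to see that $W=0$ is allowed, and then apply Theorem~\ref{thm:monconj}. No difference in approach, and no gaps.
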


\subsection{A technical lemma.}\label{ss:??} In the remaining part of this section we show that
any monodromy eigenvalue of a given $f$ can be generated by poles
of different $Z(f,W)$ with $W$ allowed. The proof is given in
several steps. This subsection contains two technical partial
steps, Lemma \ref{le:pole} and Proposition \ref{prop:extend2},
targeting those subdiagrams from where the extension of the
allowed forms is harder (compare with (\ref{re:extend})(b)). The
main result is Theorem (\ref{th:ext}).

We formulate and prove the following lemma in the context of an
arbitrary diagram $\Gamma(F)$ (corresponding to an effective
divisor on an IHS germ $(X,0)$). The proof in the plane curve case
is not easier, and we will need the general statement  in the next section as well.

\begin{lemma} \labelpar{le:pole} Suppose that $\Gamma=\Gamma(F)$
is star-shaped. Let the central node be connected to $n$ boundary
vertices and $r$ arrowheads whose supporting edges have
decorations $\{d_\ell\}_{\ell=1}^n$ and $\{p_\ell\}_{\ell=1}^r$,
respectively.
 Here $r\geq 1$, $n\geq 0$, and $r+n\geq 3$.

\begin{picture}(200,80)(265,-20)
\put(440,20){\circle*{4}}\put(410,35){\circle*{4}}\put(410,5){\circle*{4}}
\put(440,19){\vector(2,1){30}}\put(440,19){\vector(2,-1){30}}
\put(440,20){\line(-2,-1){30}} \put(440,20){\line(-2, 1){30}}
\put(460,23){\makebox(0,0){$\vdots$}}\put(420,23){\makebox(0,0){$\vdots$}}
\put(440,-10){\makebox(0,0){$v$}}
\put(448,35){\makebox(0,0){$p_1$}}\put(448,8){\makebox(0,0){$p_r$}}
 \put(433,30){\makebox(0,0){$d_1$}} \put(433,10){\makebox(0,0){$d_n$}}
\put(488,34){\makebox(0,0){\tiny{$(N_1)$}}}
\put(488,42){\makebox(0,0){\tiny{$k_1-1$}}}
\put(488,-1){\makebox(0,0){\tiny{$(N_r)$}}}
\put(488,7){\makebox(0,0){\tiny{$k_r-1$}}}
\put(375,5){\makebox(0,0){\tiny{$i_n-1$}}}
\put(375,35){\makebox(0,0){\tiny{$i_1-1$}}}
%\put(560,20){\makebox(0,0){($r$ arrowheads)}}
\dashline[3]{3}(440,22)(465,35)\put(465,35){\vector(2,1){5}}
\dashline[3]{3}(440,22)(465,9)\put(465,9){\vector(2,-1){5}}
\dashline[3]{3}(410,35)(395,35)\put(395,35){\vector(-1,0){5}}
\dashline[3]{3}(410,5)(395,5)\put(395,5){\vector(-1,0){5}}
\end{picture}

Let $\lambda$ be a root of the Alexander polynomial
$\Lambda_\Gamma(t)$ of $\Gamma$. Then there exist infinitely many (even infinitely many {\rm effective})
allowed $P$-divisors $W$ for $\Gamma$ (corresponding to the
decorated dashed arrows on the diagram) admitting a pole $s_0$ of the
topological zeta function $Z(\Gamma(F,W);s)$, such that $\exp(2\pi
is_0)=\lambda$.

\end{lemma}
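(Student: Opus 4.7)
My plan is to realize $\lambda$ through the candidate pole $s_0 = -\nu_v/N_v$ attached to the central node $v$, adjusting only the dashed decorations $(i_\ell)_\ell$ at boundary vertices and $(k_j)_j$ at (doubled) arrowheads. First I would write $\Lambda_\Gamma(t)$ explicitly from (\ref{eq:zeta}) and (\ref{eq:alex}): for the star-shaped $\Gamma$ it is $(t^{N_v}-1)^{r+n-2}/\prod_{\ell=1}^n (t^{N_v/d_\ell}-1)$, multiplied by $(t^{N_1}-1)$ when $r=1$. Writing $\lambda = \exp(2\pi i u/N_v)$ and $S:=\{\ell:d_\ell\nmid u\}$, a short multiplicity count shows that $\lambda$ is a root iff $|S|\geq\max(0, 3-r)$; in the case $r=1$ the extra factor $(t^{N_1}-1)$ exactly cancels the would-be root coming from $|S|=0$, so one never needs a pole of type $-k_1/N_1$.

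Next I would parametrize $W$ by setting $i_\ell := d_\ell$ for $\ell\notin S$ and keeping $(i_\ell)_{\ell\in S}$ and $(k_j)_{j=1}^r$ as free integer parameters. Formula (\ref{eq:nu}) yields
\[
\nu_v = (2-r-|S|)DP + \sum_{\ell\in S}\frac{DP}{d_\ell}\,i_\ell + \sum_{j=1}^r \frac{DP}{p_j}\,k_j,
\]
with $D:=\prod_\ell d_\ell$ and $P:=\prod_j p_j$. The pairwise coprimality of $\{d_\ell\}\cup\{p_j\}$ from (\ref{eq:compcon})(b) makes the gcd of the coefficients of the free parameters equal to $\prod_{\ell\notin S}d_\ell$; since $d_\ell\mid u$ for each $\ell\notin S$, this gcd divides $u$ and the congruence $\nu_v\equiv -u\pmod{N_v}$ admits an infinite full-rank lattice $L$ of integer solutions.

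The key observation is that every solution in $L$ is automatically allowed. Reducing the congruence modulo $d_\ell$ for $\ell\in S$ kills every term except $(DP/d_\ell)\,i_\ell$, giving $i_\ell\equiv -u\,(DP/d_\ell)^{-1}\not\equiv 0\pmod{d_\ell}$ precisely because $\ell\in S$. Hence $d_\ell\nmid i_\ell$ for every $\ell\in S$, so the divisibility hypotheses in Definition \ref{def:alloweddiagram}(\ref{allow.2}) fail automatically when $r\in\{1,2\}$ (using $|S|\geq 1$ and $|S|\geq 2$ respectively), and allowedness is vacuous when $r\geq 3$. The residue at $s_0$, read off from Definition \ref{I:zetadiagram}, is
\[
\calR = \frac{1}{N_v}\!\left[\,2-r-|S| + \sum_{\ell\in S}\frac{d_\ell}{i_\ell} + \sum_{j=1}^r\frac{p_j}{k_j+s_0 N_j}\right],
\]
and driving the parameters $i_\ell, k_j$ to $+\infty$ along $L$ makes the fractional sums vanish, so $\calR\to (2-r-|S|)/N_v\leq -1/N_v<0$ under the root condition $|S|\geq 3-r$. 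Thus $\calR\neq 0$ for all large positive parameter choices, which simultaneously gives effectivity ($i_\ell-1, k_j-1\geq 0$) and infinitely many distinct allowed $W$.

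The only remaining obstacle I anticipate is ensuring that $s_0$ remains a \emph{simple} pole of $Z(\Gamma(F,W);s)$, i.e.\ that $s_0\neq -k_j/N_j$ for every $j$ (otherwise the pole would become double and the above residue argument would need modification). The collision locus $\{\nu_v N_j = k_j N_v\text{ for some }j\}$ is a finite union of proper sub-lattices of $L$, so its complement still contains infinitely many effective, allowed solutions, which completes the construction.
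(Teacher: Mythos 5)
Your proof follows the same scheme as the paper's, with one useful extra normalization. Both arguments write $\lambda=\exp(2\pi i u/N_v)$, read off the Alexander polynomial from (\ref{eq:zeta})--(\ref{eq:alex}), observe that $N_v$ is independent of the dashed decorations so that the congruence $\nu_v\equiv -u \pmod{N_v}$ is solvable over a full-rank coset of $\mathbb Z^{r+|S|}$, and then reduce that congruence modulo each $d_\ell$ to force $d_\ell\nmid i_\ell$ exactly on $S$, which makes allowedness automatic. Your additional step of pinning $i_\ell=d_\ell$ for $\ell\notin S$ (the paper leaves those $i_\ell$ free) is a pleasant normalization that collapses the residue to the clean form $\tfrac1{N_v}\bigl[2-r-|S|+\sum_{\ell\in S}d_\ell/i_\ell+\sum_j p_j/(k_j+s_0N_j)\bigr]$ with the constant term already $\leq -1$.

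The one step that is not yet airtight is the limit claim ``driving the parameters to $+\infty$ along $L$ makes the fractional sums vanish.'' You treat $s_0$ as if it were stationary, but $s_0=-\nu_v/N_v$ grows linearly along $L$, so the denominators $k_j+s_0N_j$ behave like $t\bigl(\beta_j-cN_j\bigr)$ for the chosen direction, and for a badly chosen direction some coefficient $\beta_j-cN_j$ may vanish and leave that fraction bounded away from $0$. The patch is immediate: move along a direction that manifestly works, e.g.\ shift only the $i_\ell$ ($\ell\in S$) by multiples of $N_v$ while keeping the $k_j$ fixed and positive; then $\nu_v/N_v\to+\infty$, so each $k_j+s_0N_j\to-\infty$ and each $d_\ell/i_\ell\to 0$, and the residue tends to $(2-r-|S|)/N_v<0$ as you want. (Equivalently one can simply invoke, as the paper does, that the rational function $\calR$ is not identically zero and a lattice coset is Zariski-dense, so $\calR\neq0$ at infinitely many points.) Also note that avoiding $s_0=-k_j/N_j$ is not actually necessary: the paper observes that such a collision makes $s_0$ a pole of order two, which is a pole a fortiori, so that case hands you the conclusion for free; excluding it is harmless but superfluous.
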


\begin{proof} %We might use the diagram from  part (2) of the proof of (\ref{thm:monconj}).
Denote $D:=\prod_{\ell=1}^n d_\ell$ and
$P:=\prod_{\ell=1}^r p_\ell$. For any $P$-divisor $W$, that is,
for any set of decorations $k_1,\dots,k_r,i_1,\dots,i_n$ (with the
$i_\ell\neq 0$), we have that $Z(s):=Z(\Gamma(F,W);s)$ has the
form
$$
\frac 1{\nu + sN} \left( 2-r-n+ \sum_{\ell=1}^n \frac
{d_\ell}{i_\ell} + \sum_{\ell=1}^r \frac {p_\ell} {k_\ell+sN_\ell}
\right),
$$
where
\begin{equation}\label{eq:nu2z}
N= D\sum_{\ell=1}^r \frac P{p_\ell} N_\ell
\ \ \ \mbox{and} \ \ \
\nu = D \sum_{\ell=1}^r  \frac P{p_\ell} k_\ell + P\sum_{\ell=1}^n
\frac D{d_\ell} i_\ell - (r+n-2)DP.
\end{equation}
If the candidate pole $-\nu/N$ is equal to some $k_\ell/N_\ell$,
then it is a pole of order two. Otherwise, we consider its
residue, which is (up to a factor $N$) equal to
$$
\calR:= 2-r-n+ \sum_{\ell=1}^n \frac {d_\ell}{i_\ell} +
\sum_{\ell=1}^r \frac {p_\ell}{k_\ell -(\nu/N)N_\ell}.
$$
One easily verifies that this expression is not identically zero
as function in the $r+n (\geq 3)$ variables $i_\ell$ and $k_\ell$.
Hence $-\nu/N$ is a pole of $Z(s)$ as soon as the algebraic
equation $\calR=0$ is not satisfied.

We consider three cases (depending on the value of $r$) for the
roots of $\Lambda_\Gamma(t)$, which is given by
$$
\frac {(t^N-1)^{r+n-2}}{\prod_{\ell=1}^n (t^{N/{d_\ell}}-1)},
$$
except when $r=1$, where we must multiply this expression by
$t^{N_1}-1$.

\smallskip
$\bullet$  $\mathbf{(r\geq 3)}$ \ Then its roots are all the
$N$-th roots of unity.

$\bullet$  $\mathbf{(r=2)}$ \ Then its roots are all $N$-th roots of unity that are {\it
not} $(N/d_\ell)$-th roots of unity simultaneously for all
$\ell=1,\dots,n$. In other words all $\exp(2\pi i \frac uN)$ for
which $u \not\equiv 0 \mod d_\ell$ for at least one $d_\ell$.

$\bullet$  $\mathbf{(r=1)}$ \ Then its roots are all $N$-th roots of unity that are {\it
not} $(N/d_\ell)$-th roots of unity simultaneously for (at least)
$n-1$ indexes $\ell=1,\dots,n$. In other words all $\exp(2\pi i
\frac uN)$ for which $u \not\equiv 0 \mod d_\ell$ for at least two
different $d_\ell$.

\smallskip
Fix a root $\lambda=\exp(2\pi i \frac uN)$ of $\Lambda_\Gamma(t)$.
Since the numbers $p_1,\dots,p_r,d_1,\dots,d_n$ are pairwise
coprime, there exist integers $k_1,\dots,k_r,i_1,\dots,i_n$ (all
positive if we desire so) such that $\nu$ in (\ref{eq:nu2z})
satisfies $\nu \equiv u \mod N$. When $r=2$ or $r=1$, the
restrictions on the given $u$ imply that $d_\ell \nmid i_\ell$ for
at least one or at least two indexes $\ell$, respectively. Hence
in each case the constructed $W$ is allowed for $\Gamma$.

Since we can choose the numbers $k_1,\dots,k_r,i_1,\dots,i_n$
freely$\mod N$, it is clear that we can find infinitely many such
sets (in $\Z$ or in $\Z_{>0}$) that satisfy $\calR \neq 0$, and hence $s_0=-\nu/N$ is then a
pole of $Z(s)$ satisfying $\exp(2\pi is_0)=\lambda$.
\end{proof}

\begin{remark} \labelpar{re:pole}  Assume that above  $r=1$ with $p_1=1$ and
$n=2$. Set $s_0=-\nu/N$. The fact that $\lambda=\exp(2\pi is_0)$ is a root
 of $\Lambda_\Gamma(t)$ is equivalent to $d_1 \nmid i_1$ and $d_2 \nmid i_2$.
 On the other hand, $\calR=0$ if and only if $(d_1-i_1)(d_2 -i_2)=0$.
 Hence, if $\lambda$ is a root, then $s_0$ is a pole  of $Z(s)$ (for any
allowed $W$); see also (3.4) in \cite{NV}.
\end{remark}

\bekezdes Now we return to plane curve singularities and we target
that subdiagram of the minimal splice diagram whose star--shaped
components after splicing have only one ordinary arrowhead; see
also Remark \ref{re:extend}(b).

\begin{proposition} \labelpar{prop:extend2} Consider the subdiagram (given below)
of the minimal embedded resolution diagram $\Gamma=\GaxF$ of a
plane curve germ, determining  a $P$--divisor $F$. Here $r\geq 2$
and the leg with decoration $p_r$ is optional.

Fix $k\in \{1,\dots,r-1\}$, consider the star-shaped subdiagram
$S_k=\Gamma(F_k)$ around $v_k$ and fix a root $\lambda$ of the
Alexander polynomial $\Lambda_{S_k}(t)$.  Then there exist
infinitely many allowed divisors $W_k$ for $S_k$, such that if
$(N_k,\nu_k-1)$ denote the decorations of $v_k$ as above
associated with $S_k$ and $W_k$, then

(0) \ $s_0=-\nu_k/N_k$ is a pole of $Z(F_k,W_k;s)$,

(1) \ $\exp(2\pi i s_0)=\lambda$, and

(2) $W_k$ can be extended to an allowed divisor on the whole
diagram $\Gamma$.
\end{proposition}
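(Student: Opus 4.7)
The plan is to apply Lemma~\ref{le:pole} to $S_k$ in order to realize conditions (0) and (1), and then to verify (2) by iterated application of Proposition~\ref{prop:extend}, combined at one critical step with the Addendum from Remark~\ref{re:extend}(a).

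First, since $k\le r-1$ and none of the nodes $v_1,\ldots,v_r$ carries an $F$-arrowhead (Remark~\ref{re:1234}(1)), the star-shaped piece $S_k$ has exactly one ordinary arrowhead, namely the splice-induced arrow encoding the $F$-arrows of the main part $\Gamma_\calA$. Applied to $S_k$ in its case $r=1$, Lemma~\ref{le:pole} produces infinitely many allowed divisors $W_k$ on $S_k$ satisfying (0) and (1); moreover the proof of that lemma shows that the defining integer decorations $(k_1,i_1,\ldots,i_n)$ of $W_k$ can be chosen freely modulo $N_{v_k}$, subject only to the single linear congruence fixing $\nu_{v_k}\bmod N_{v_k}$ and to the star-shaped allowedness conditions.

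To verify (2) I iteratively extend $W_k$ to the whole $\Gamma$ by attaching one star-shaped piece at a time. Within the tail $\Gamma_j$ every newly attached piece has zero $F$-arrowheads, so the second alternative of Proposition~\ref{prop:extend} applies and the step is automatic. Once the extension has crossed the unique edge $e^*$ separating $\Gamma_j$ from $\Gamma_\calA$, the remaining extension inside $\Gamma_\calA$ is furnished by Remark~\ref{re:extend}(b). The only potentially non-automatic step is therefore the crossing of $e^*$, and only when the star-shaped piece at its $\Gamma_\calA$-end $v^*$ carries exactly one or two $F$-arrowheads. In that case I invoke the Addendum~(\ref{eq:addendum})(a), which reduces extendability to the divisibility condition $d\nmid j$ (or $d=j$), where $d$ is the edge weight of $e^*$ at $v^*$ and $j$ is the dashed-arrow decoration induced at $v^*$ by the extension.

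The main obstacle is to arrange $d\nmid j$ for infinitely many of the $W_k$ from Lemma~\ref{le:pole}. Fixing the intermediate extensions within $\Gamma_j$ in some standard way (for instance with trivial dashed decorations), iteration of the splicing formulas~(\ref{eq:totali}) and~(\ref{eq:mul}) along the path from $v_k$ to $v^*$ expresses $j\bmod d$ as a $\Z$-linear function of the variables $(k_1,i_1,\ldots,i_n)$, whose coefficients are products of edge weights of $\Gamma$ along this path. By the pairwise coprimality of the edge weights at each splice node, condition~(\ref{eq:compcon})(b), at least one such coefficient is coprime to $d$. Hence among the admissible choices of $(k_1,i_1,\ldots,i_n)$ modulo $N_{v_k}$ (an infinite family satisfying both the pole condition and the allowedness restrictions), infinitely many yield $j\not\equiv 0\pmod d$, which is precisely the Addendum condition, completing the extension and the proof.
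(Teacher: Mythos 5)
Your approach is on the right track, but there is a genuine gap in the verification of (2) that the paper's proof carefully addresses and your argument does not. You assert that ``within the tail $\Gamma_j$ every newly attached piece has zero $F$-arrowheads, so the second alternative of Proposition~\ref{prop:extend} applies and the step is automatic,'' and that the only non-automatic step is the single crossing of $e^*$. This is incorrect for the direction of extension towards $\Gamma_{\mathcal A}$. When one crosses the edge $(v_m,v_{m+1})$, the star-shaped piece $S_{m+1}$ inherits a splice-induced $F$-arrowhead pointing towards $\Gamma_{\mathcal A}$, while the already-decorated side $\{v_1,\dots,v_m\}$ has none; so this is precisely an excluded case of Proposition~\ref{prop:extend}, and the Addendum condition $a_{m+1}\nmid j_{m+1}$ (or $a_{m+1}=j_{m+1}$) is required at \emph{every} intermediate edge, not only at $e^*$. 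Concretely, by (\ref{eq:totali}) the decoration $j_{m+1}$ at the newly created boundary vertex is forced by the data on $S_k,\dots,S_m$ via $j_{m+1}=i_m a_m+j_m p_m-a_m p_m$, and if $a_{m+1}\mid j_{m+1}$ with $j_{m+1}\neq a_{m+1}$, allowedness of $S_{m+1}$ forces $i_{m+1}=p_{m+1}$; but $i_{m+1}$ is simultaneously constrained modulo $p_{m+1}$ by the requirement $\ell_m=i_{m+1}+p_{m+1}(\ell_{m+1}-1)$ that the chosen decoration $\ell_k$ of $W_k$ be realized. When $p_{m+1}\nmid\ell_m$, these two constraints are incompatible and the extension fails at that intermediate step.

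Your proposed remedy, fixing the intermediate decorations ``in some standard way (for instance with trivial dashed decorations)'' and then varying $W_k$ so that a single linear congruence at $e^*$ is avoided, does not address this: with $i_{m+1}=1$ the relation $\ell_m=1+p_{m+1}(\ell_{m+1}-1)$ need not be solvable in $\Z$ at all, and even when it is, the forced $j_m$ can be a multiple of $a_m$ distinct from $a_m$, making $S_m$ non-allowed. The heart of the paper's proof is exactly the mechanism you leave out: the initial choice~(\ref{eq:625}) guarantees $|j_{k+1}|<a_kp_k$, and the inductive selection of $i_{m+1}\in[1,p_{m+1}]$ together with the edge determinant inequality $a_{m+1}>a_mp_mp_{m+1}$ propagates the bound $|j_{m+1}|<a_mp_m<a_{m+1}$ down the whole chain, which simultaneously guarantees $a_{m+1}\nmid j_{m+1}$ at every step and keeps the $\ell_m$-recurrence solvable. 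Without this size control on the intermediate $j_m$'s, the argument does not go through.
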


\noindent The subdiagram of  $\Gamma$  is

\begin{picture}(400,65)(0,0)
%\put(20,40){\circle*{4}}
\put(70,40){\circle*{4}}
\put(120,40){\circle*{4}} \put(170,40){\circle*{4}}
\put(250,40){\circle*{4}} \put(300,40){\circle*{4}}
%\put(70,10){\circle*{4}}
 \put(120,10){\circle*{4}}
\put(170,10){\circle*{4}} \put(250,10){\circle*{4}}
\put(300,10){\circle*{4}}

\put(70,40){\line(1,0){120}}\put(230,40){\line(1,0){70}}
\put(300,40){\line(2,1){30}}\put(300,40){\line(2,-1){30}}
%\put(70,40){\line(0,-1){30}}
\put(170,40){\line(0,-1){30}}
\put(120,40){\line(0,-1){30}}
\put(250,40){\line(0,-1){30}}
\put(300,40){\line(0,-1){30}}

%\put(58,45){\makebox(0,0){$a_1$}}
\put(108,45){\makebox(0,0){$a_1$}}
\put(158,45){\makebox(0,0){$a_2$}}
\put(233,45){\makebox(0,0){$a_{r-1}$}}
\put(288,45){\makebox(0,0){$a_r$}}

%\put(72,30){\makebox(0,0)[l]{$p_1$}}
\put(122,30){\makebox(0,0)[l]{$p_1$}}
\put(172,30){\makebox(0,0)[l]{$p_2$}}
\put(252,30){\makebox(0,0)[l]{$p_{r-1}$}}
\put(302,30){\makebox(0,0)[l]{$p_r$}}

\put(208,40){\makebox(0,0){$\ldots$}}
\put(318,43){\makebox(0,0){$\vdots$}}
%\put(70,55){\makebox(0,0){$v_1$}}
\put(120,55){\makebox(0,0){$v_1$}}
\put(170,55){\makebox(0,0){$v_2$}}
\put(250,55){\makebox(0,0){$v_{r-1}$}}
\put(300,55){\makebox(0,0){$v_r$}}

\put(340,40){\makebox(0,0){$\ldots$}}

%\dashline[3]{3}(170,10)(185,10)\put(185,10){\vector(1,0){5}}
%\dashline[3]{3}(120,10)(135,10)\put(135,10){\vector(1,0){5}}
%\dashline[3]{3}(250,10)(265,10)\put(265,10){\vector(1,0){5}}
%\dashline[3]{3}(300,10)(315,10)\put(315,10){\vector(1,0){5}}
\end{picture}

\noindent with spliced star--shaped subdiagrams (where we insert the wanted $W_k$'s too)

\begin{picture}(400,65)(-20,0)
\put(0,40){\circle*{4}}
\put(50,40){\circle*{4}}
\put(120,40){\circle*{4}}
\put(170,40){\circle*{4}}
\put(250,40){\circle*{4}} \put(300,40){\circle*{4}}
\put(50,10){\circle*{4}}
% \put(120,10){\circle*{4}}
\put(170,10){\circle*{4}} %\put(250,10){\circle*{4}}
\put(300,10){\circle*{4}}

\put(0,40){\vector(1,0){70}}\put(250,40){\line(1,0){50}}
\put(300,40){\line(2,1){30}}\put(300,40){\line(2,-1){30}}
\put(50,40){\line(0,-1){30}}
\put(170,40){\line(0,-1){30}}
%\put(120,40){\line(0,-1){30}}
%\put(250,40){\line(0,-1){30}}
\put(300,40){\line(0,-1){30}}
\put(120,40){\vector(1,0){70}}

\put(38,45){\makebox(0,0){$a_1$}}
%\put(108,45){\makebox(0,0){$a_1$}}
\put(158,45){\makebox(0,0){$a_k$}}
%\put(233,45){\makebox(0,0){$a_{r-1}$}}
\put(288,45){\makebox(0,0){$a_r$}}

\put(52,30){\makebox(0,0)[l]{$p_1$}}

\put(172,30){\makebox(0,0)[l]{$p_k$}}

\put(302,30){\makebox(0,0)[l]{$p_r$}}

\put(325,43){\makebox(0,0){$\vdots$}}

\put(120,10){\makebox(0,0){\tiny{$j_k-1$}}}
\dashline[3]{3}(120,40)(120,20)\put(120,20){\vector(0,-1){5}}
\put(250,10){\makebox(0,0){\tiny{$j_r-1$}}}
\dashline[3]{3}(250,40)(250,20)\put(250,20){\vector(0,-1){5}}

\dashline[3]{3}(50,42)(65,42)\put(65,42){\vector(1,0){5}}
\dashline[3]{3}(170,42)(185,42)\put(185,42){\vector(1,0){5}}

\dashline[3]{3}(50,10)(65,10)\put(65,10){\vector(1,0){5}}
\dashline[3]{3}(170,10)(185,10)\put(185,10){\vector(1,0){5}}
%\dashline[3]{3}(250,10)(265,10)\put(265,10){\vector(1,0){5}}
\dashline[3]{3}(300,10)(315,10)\put(315,10){\vector(1,0){5}}

\put(70,50){\makebox(0,0){\tiny{$\ell_1-1$}}}\put(70,5){\makebox(0,0){\tiny{$i_1-1$}}}
\put(190,50){\makebox(0,0){\tiny{$\ell_k-1$}}}
\put(193,40){\makebox(0,0)[l]{\tiny{$(\frac{N_k}{p_ka_k})$}}}
\put(190,5){\makebox(0,0){\tiny{$i_k-1$}}}
\put(320,5){\makebox(0,0){\tiny{$i_r-1$}}}

%\put(300,40){\vector(2,1){30}}\put(300,40){\vector(2,-1){30}}
%\dashline[3]{3}(300,43)(330,58)\put(325,55){\vector(2,1){5}}
%\dashline[3]{3}(300,43)(330,28)\put(325,31){\vector(2,-1){5}}

\dashline[3]{3}(0,40)(0,20)\put(0,20){\vector(0,-1){5}}
\put(0,10){\makebox(0,0){\tiny{$j_1-1$}}}
\put(95,40){\makebox(0,0){$\ldots$}}\put(235,40){\makebox(0,0){$\ldots$}}
\end{picture}

\begin{proof} From Lemma \ref{le:pole} we know that infinitely many
$W_k$ satisfying (0) and (1) exist, but in order to satisfy also
(2), we will specify choices. Write $\lambda$ as
$\lambda=\exp(2\pi i (u/N_k))$, where (since it is a root of
$\Lambda_{S_k}(t)$)  $a_k \nmid u$ and $p_k \nmid u$. We choose
the decorations $i_k$, $j_k$ and $\ell_k$ such that
$$
\nu_k=i_ka_k + j_kp_k +(\ell_k-1) a_kp_k \equiv u  \hspace{6mm} (\mmod \ N_k),
$$
and, moreover
\begin{equation}\label{eq:625}
0< i_ka_k + j_kp_k - a_kp_k < a_kp_k.
\end{equation}
Note that this is possible since $a_kp_k | N_k$, and that we can
choose $\ell_k$ freely$\mod N_k/(a_kp_k)$. By Remark \ref{re:pole}
we know that $-\nu_k/N_k$ is a pole of $Z(F_k,W_k;s)$.

We claim that we can choose inductively $\{i_m,\ell_m\}$ for
$m=k+1,\dots,r-1$ such that $|j_{m+1}|<a_mp_m$ for
$m=k,\dots,r-1$. Then for these $m$ this yields $|j_{m+1}|<a_m
p_m<a_{m+1}$ and thus $a_{m+1}\nmid j_{m+1}$. By Remark
\ref{re:extend} this ensures that we can extend $W_k$ further to
obtain an allowed divisor $W$ on the whole diagram $\Gamma$.

We now prove the claim. By (\ref{eq:totali}) we have
\begin{equation}\label{eq:ell}
\ell_m=i_{m+1} + p_{m+1}(\ell_{m+1}-1)
\end{equation}
for $m=k,\dots,r-2$, and
\begin{equation}\label{eq:j}
j_{m+1}=i_ma_m + j_mp_m -a_mp_m
\end{equation}
for $m=k,\dots,r-1$. In particular, we know already from
(\ref{eq:625}) that $|j_{k+1}|<a_kp_k$. When some $\{i_m,\ell_m\}$
is constructed we take each time $i_{m+1}$ and $\ell_{m+1}$ in
(\ref{eq:ell}) such that $1\leq i_{m+1} \leq p_{m+1}$. Then it
follows  from (\ref{eq:j}) and the inductive argument (and the
positivity of the edge determinant) that indeed $|j_{m+1}|<a_mp_m$
for all $m=k+1,\dots,r-1$.
\end{proof}

\subsection{}\label{ss:maincurve} Now we are ready to prove the theorem regarding the
abundance of the allowed forms.

\begin{theorem}\labelpar{th:ext}  Let $(X,0)$ be a smooth surface germ and $f$ an analytic function on $X$,
determining a (plane) curve singularity. Let $\lambda$ be a
monodromy eigenvalue of $f$ at a point of $\{f=0\}$. Then there
exist infinitely many {\it allowed} $P$-divisors $W$ for
$(X,\div(f))$, and for each of them a pole $s_0$ of the
topological zeta function $Z(f,W;s)$ such that $\exp(2\pi
is_0)=\lambda$.
\end{theorem}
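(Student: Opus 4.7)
The plan rests on the multiplicativity $\Lambda_\Gamma(t) = \prod_{v \in \calN} \Lambda_{S_v}(t)$ (iterated Proposition \ref{prop:alex}), where for each node $v$ of $\Gamma := \Gamma_\pi(X,\div(f))$ we denote by $S_v$ the star-shaped subdiagram around $v$ produced by repeated splicing. Every monodromy eigenvalue $\lambda$ at any point of $\{f=0\}$ is a root of $\Lambda_\Gamma(t)$: eigenvalues at the origin correspond to the characteristic polynomial $\Delta_1$, while eigenvalues with $\lambda^{N_a}=1$ at smooth points of a component $F_a$ are picked up by the factor $t^{N_a}-1$ present in $\Lambda_{S_v}$ whenever the node $v$ carrying $a$ has $a$ as its unique arrowhead. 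Consequently there exists a node $v$ with $\Lambda_{S_v}(\lambda)=0$.

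I would next invoke Lemma \ref{le:pole} on $S_v$ to obtain an allowed $P$-divisor $W_v$ on $S_v$ together with a pole $s_0=-\nu_v/N_v$ of $Z(S_v)$ satisfying $\exp(2\pi i s_0)=\lambda$; the lemma produces an infinite family of such $W_v$. The next task is to extend $W_v$ to an allowed $W$ on $\Gamma$ whose restriction, in the splice sense of Lemma \ref{lemma:nonzero}, to $S_v$ is $W_v$.

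Away from the configuration in which several consecutive spliced star-shaped pieces carry only one or two ordinary arrowheads and the extension direction points away from those arrowheads, Proposition \ref{prop:extend} provides the inductive extension in one step across each splice edge. The delicate configuration is exactly the chain addressed by Proposition \ref{prop:extend2}: when the realizing node $v$ lies among $v_1,\ldots,v_{r-1}$ of such a chain, the proposition directly yields both a $W_v$ with the desired pole property and an allowed extension along the remainder of the chain, while the rest of $\Gamma$ lies beyond the arrowheads supported by $v_1,\ldots,v_r$ and admits an unobstructed extension by Proposition \ref{prop:extend}. If $v = v_r$, the star-shaped construction of Lemma \ref{le:pole} on $S_{v_r}$ together with Proposition \ref{prop:extend} suffices.

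Finally, one checks that $s_0$ genuinely remains a pole of $Z(f,W;s)$. By Proposition \ref{prop:splicezeta}(3), the contribution of $v$ to the residue of $Z(\Gamma)$ at $s_0$ equals its contribution to the residue of $Z(S_v)$, provided $s_0$ is not a pole of order two of $Z(\Gamma)$; the latter residue is nonzero by the very construction of $W_v$. The principal obstacle I expect is therefore to tune the free parameters in Lemma \ref{le:pole} and Proposition \ref{prop:extend2} so that $-\nu_v/N_v$ does not accidentally coincide with $-\nu_w/N_w$ for any node $w$ adjacent to $v$ in $\Gamma$, an event that would create an undesired order-two pole. Since only finitely many values of the free parameters produce such a coincidence, while Lemma \ref{le:pole} and Proposition \ref{prop:extend2} furnish infinitely many admissible $W_v$, this is always avoidable, and the same abundance yields the infinitely many allowed $W$ asserted in the theorem.
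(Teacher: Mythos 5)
Your proposal captures the correct skeleton — multiplicativity of $\Lambda$, Lemma~\ref{le:pole} on a star-shaped piece, extension via Propositions~\ref{prop:extend} and~\ref{prop:extend2}, and Theorem~\ref{prop:splicezeta}(3) to track residues — but there are two genuine gaps.

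First, the reduction ``$\lambda$ is an eigenvalue at some point of $\{f=0\}$ $\Rightarrow$ $\Lambda_{S_v}(\lambda)=0$ for some node $v$'' is false. Eigenvalues at a smooth point $b\ne 0$ of a component $F_a$ are the $N_a$-th roots of unity; they need not be roots of $\Lambda_\Gamma$. Your argument relies on the factor $t^{N_a}-1$ appearing in $\Lambda_{S_v}$, but this only happens when the spliced $S_v$ has $a$ as its \emph{unique} arrowhead ($r=1$), which is the exception rather than the rule: after splicing, the node carrying $a$ typically acquires additional arrowheads from the splice connections, giving $r\ge 2$, and then $\Lambda_{S_v}=\zeta_{S_v}$ which need not vanish at $\lambda$. (Concretely, for $f=x^2y$ one finds $\Lambda_\Gamma\equiv 1$ while $-1$ is an eigenvalue at points of $\{x=0\}$.) The paper handles this case separately (part (1) of its proof): instead of going through any $\Lambda_{S_v}$, one builds $W$ so that the pole $s_0=-i_a/N_a$ is contributed directly by the \emph{arrowhead} term $d/\bigl((\nu_v+sN_v)(i_a+sN_a)\bigr)$, choosing $i_a\equiv u \pmod{N_a}$ and avoiding $i_a/N_a=i_{a'}/N_{a'}$ for the other arrowheads $a'$ of $S_v$. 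Your proposal has no analogue of this step.

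Second, and more seriously, you misdiagnose the danger in the final residue check. You propose to tune parameters so that $-\nu_v/N_v$ does not coincide with $-\nu_w/N_w$ for nodes $w$ adjacent to $v$, in order to avoid an ``undesired order-two pole.'' But an order-two pole is \emph{not} a problem — the paper explicitly notes that if $s_0$ is a pole of order two of $Z(F_w,W_w;s)$ then it remains a pole of order two of $Z(f,W;s)$ and we are done. The genuine obstruction is cancellation of residue contributions: when $s_0$ is a pole of order one and $s_0=-\nu_w/N_w$ for a subset $\calN'\subset\calN$ (not necessarily adjacent to $v$), the local residue contributions $\calR_w$ can sum to zero even though each is nonzero. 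Your claim that ``only finitely many values of the free parameters produce such a coincidence'' is not justified: once $W_v$ is fixed on $S_v$, the subsequent extension across splice edges can be forced (e.g.\ when $d_\ell\mid i'$ for $n-1$ indices, the proof of Proposition~\ref{prop:extend} pins down $i_\ell=d_\ell$), leaving no room to steer $\nu_w$ away from the bad value. The paper takes the opposite route: it \emph{accepts} the coincidences and instead perturbs the decorations $i_1,\dots,i_n$ at an extreme node of $\calN'$ so as to change the residue contribution $\calR_v$ while keeping $\nu_v$ (hence $s_0$) fixed, showing via cases $n\ge 3$ and $n=2$ that the residue sum can always be made nonzero, and iterating this across the diagram. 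This modification argument — the entire part (3) of the paper's proof — is not replaceable by the avoidance heuristic you propose without an independent (and nontrivial) justification.
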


\begin{proof} Let $\Gamma(F)$ be the diagram of the minimal
embedded resolution of $f$.

(1) Suppose first that $\lambda$ is a monodromy eigenvalue at a
point $b\in \{f=0\}$, with $b\neq 0$. Writing
$\div(f)=\sum_{a\in\calA_F} N_a F_a$, this means that $\lambda$ is
a $N_a$-th root of unity for some $N_a$. Fix such an
$a\in\calA_F$; so $\lambda=\exp(2\pi i (-u/N_a))$ for some (fixed)
$u\in \{1,\dots,N_a\}$.

\begin{picture}(100,50)(85,0)
\put(240,20){\circle*{4}}
\put(240,20){\vector(1,0){30}}
\put(240,20){\line(-2,-1){30}}
\put(240,20){\line(-2, 1){30}}
\put(220,23){\makebox(0,0){$\vdots$}}
\put(250,28){\makebox(0,0){$d$}}
\put(242,0){\makebox(0,0){$v$}}
\put(288,25){\makebox(0,0){\tiny{$i_a-1$}}}
\put(285,14){\makebox(0,0){\tiny{$(N_a)$}}}
\dashline[3]{3}(240,22)(265,22)\put(265,22){\vector(1,0){5}}
\end{picture}

\medskip
Consider the star-shaped subdiagram $S_v=\Gamma(F_v)$ around the
vertex $v$, to which the arrowhead $a$ is attached. Choose an
allowed $P$-divisor $W_v$ for $S_v$ with decoration $i_a \equiv u
\mod N_a$, such that moreover $i_a/N_a \neq i_{a'}/N_{a'}$ for all
(eventual) other arrowheads $a'$ on $S_v$. (Here we have
infinitely many such choices.) Then either $-i_a/N_a$ is a pole of
order two of $Z(F_v,W_v;s)$, or the contribution of $a$ to this
zeta function is
$$
\frac d{(\nu_v+sN_v)(i_a+sN_a)}
$$
and hence the residue of $-i_a/N_a$ is nonzero.

We surely can extend $W_v$ to an allowed divisor $W$ on the whole
of $\Gamma(F)$ (see Remark \ref{re:extend}), and doing so we do
not use the value $i_a$. (For this see the proof of Proposition
\ref{prop:extend}; with the notation of that proof, from
$\Gamma_R$ only $i'$ was used.)  If $-i_a/N_a$ is not a pole of
order two and $i_a/N_a$ would be equal to some value $\nu_w/N_w,
w\in (\calN \cup \calA_F) \setminus \{a\},$ then we can add to
$i_a$ some multiple of $N_a$ in order to avoid this. This way we
are sure that $-i_a/N_a$ is a pole of $Z(f,W;s)$.

\smallskip
(2) Suppose now that $\lambda$ is not as in (1); hence it is a
root of the Alexander polynomial $\Lambda_{\Gamma(F)}(t)$. By
Proposition \ref{prop:alex} there is at least one node $w\in\calN$
such that $\lambda$ is a root of $\Lambda_{S_w}(t)$, where
$S_w=\Gamma(F_w)$ is the star-shaped subdiagram around $w$. By
Lemma \ref{le:pole} and Proposition \ref{prop:extend2} there exist
infinitely many allowed $P$-divisors $W_w$ for $S_w$, and for each
of them a pole $s_0$ of $Z(F_w,W_w;s)$ such that $\exp(2\pi
is_0)=\lambda$ and $W_w$ can be extended to an allowed divisor $W$
on the whole diagram $\Gamma(F)$. Indeed, the possible obstruction
to extend $W_w$, as described in Remark \ref{re:extend}, is
removed in Proposition \ref{prop:extend2}.

If $s_0$ is a pole of order two of $Z(F_w,W_w;s)$, then it is a
pole of order two of $Z(s)=Z(f,W;s)$ too, hence we are done.
Otherwise, there is a potential problem when the following
situation occurs: $s_0$ is not a pole of order two,
$s_0=-\nu_w/N_w$ for a subset  $\calN' \subset \calN$ containing
at least two nodes $w$ of $\Gamma(F)$, such that for each
$w\in\calN'$ the local residue-contribution (to the total residue
of $s_0$) $\calR_w \neq 0$,  and $\sum_{w\in\calN'}\calR_w =0$. In
this case $s_0$ is not a pole of $Z(s)$, although it is a pole of
several $Z(F_w,W_w;s)$.

\begin{picture}(100,65)(105,-10)

\put(240,20){\circle*{4}} %\put(270,20){\circle*{4}} \put(340,20){\circle*{4}}
\put(370,20){\circle*{4}}
\put(240,20){\vector(1,0){30}}\put(370,20){\vector(-1,0){30}}
\put(240,20){\vector(-2,-1){30}}
\put(240,20){\vector(-2, 1){30}}
\put(220,23){\makebox(0,0){$\vdots$}}
\put(370,20){\line(2,-1){30}}
\put(370,20){\line(2, 1){30}}
\put(387,23){\makebox(0,0){$\vdots$}}
%\put(242,-10){\makebox(0,0){$v_L$}}
%\put(368,-10){\makebox(0,0){$v_R$}}
\put(250,28){\makebox(0,0){$d$}} %\put(363,28){\makebox(0,0){$d'$}}
%\put(388,35) {\makebox(0,0){$d'_1$}}

\put(370,40){\makebox(0,0){$\Gamma_R$}}

 \put(232,35){\makebox(0,0){$d_1$}} \put(233,5){\makebox(0,0){$d_n$}}
\put(288,24){\makebox(0,0){\tiny{$i-1$}}}
%\put(324,23){\makebox(0,0){\tiny{$i'-1$}}}
\put(285,15){\makebox(0,0){\tiny{$(N)$}}}

\dashline[3]{3}(240,22)(265,22)\put(265,22){\vector(1,0){5}}
\put(326,24){\makebox(0,0){\tiny{$k-1$}}}
\put(195,42){\makebox(0,0){\tiny{$i_1-1$}}}
\put(195,34){\makebox(0,0){\tiny{$(N_1)$}}}

\put(195,8){\makebox(0,0){\tiny{$i_n-1$}}}
\put(195,0){\makebox(0,0){\tiny{$(N_n)$}}}

\dashline[3]{3}(240,22)(210,37)\put(215,35){\vector(-2,1){5}}
\dashline[3]{3}(240,22)(210,7)\put(215,10){\vector(-2,-1){6}}
\dashline[3]{3}(370,22)(345,22)\put(345,22){\vector(-1,0){5}}

\end{picture}

Take an \lq extreme\rq\ node $v \in \calN'$ (the node of the left diagram above),
meaning that it is a boundary vertex of the full subdiagram of $\Gamma$ generated by  $\calN'$.
%(geodesically) connected to only one other node in $\calN'$.
Consider the star-shaped subdiagram $S_v$ of $\Gamma=\Gamma(F,W)$
around $v$, where the edge $e$ with decoration $d$ is in the
direction of the other nodes in $\calN'$, and the diagram
$\Gamma_R$, obtained after splicing $\Gamma$ along $e$. (It is not
necessary to have arrowheads at all the  legs of $S_v$, as it is
indicated in the above diagram; in those cases we put formally
$N_\ell=0$ or $N=0$.)

Denote $D:=\prod_{j=1}^n d_j$. By (\ref{eq:nu}) and
(\ref{eq:totali}) (or from  (\ref{eq:**}) below)
%the proof of Lemma\ref{lemma:edge})
we know that $\nu_v$ depends only on $i$ and
$k=\sum_{j=1}^n (D/d_j)i_j - (n-1)D$, and not on the actual
(separate) values of $i_1,\dots,i_n$. The residue-contribution
$\calR_v$ though depends on these values.

\smallskip
{\it We claim that we can modify $i_1,\dots,i_n$ keeping $k$ (and
hence $\nu_v$) fixed, but changing $\calR_v$, such that the newly
created divisor (determined by these new $i_1,\ldots, i_n$) has
the following properties: it agrees with the old $P$--divisor on
$\Gamma_R$, and can be extended from  $S_v\cup \Gamma_R$ further
`to the left' to a new allowed $P$-divisor $\tilde W$ on the whole
diagram.}

\smallskip
In that way the new value becomes $\sum_{v\in\calN'}\calR_v \neq
0$. If \lq on the left\rq\ there are no nodes $v'$ with \lq new
value\rq\ $\nu_{v'}/N_{v'}=s_0$, we are done since then this sum
is the (total) residue of $s_0$ for $Z(s)$. We are still done if
the sum of $\sum_{v\in\calN'}\calR_v$ and all new
residue-contributions of these $v'$ is nonzero. Otherwise, we
repeat the argument, replacing $v$ in the claim by such a new
(extreme) node $v'$. This process must stop by finiteness of the
diagram.

\smallskip
(3)  We now prove the claim. We start with
two observations.

\smallskip
\noindent (i) Since there are at least two nodes in $\calN'$, we
can always assume that there is an ordinary arrowhead at the leg
of $S_v$ with decoration $d$, that is, that $N\neq 0$. Then, with
the terminology of Remark \ref{re:extend}(b), we can extend any
$P$-divisor on $\Gamma_R$ \lq unconditionally\rq\ to the left.
More precisely, the specific situation/problem of Proposition
\ref{prop:extend2} will never occur; we can always simply follow
the procedure in the proof of Proposition \ref{prop:extend}.

\smallskip
\noindent (ii) We must be sure that we {\it can} modify
$i_1,\dots,i_n$ while extending from $\Gamma_R$, considering the
extension procedure described in the proof of Proposition
\ref{prop:extend}. In this procedure there is no room to modify
$i_1,\dots,i_n$ only if (after renumbering) $i_\ell$ must be
chosen as $i_\ell=d_\ell$ for $\ell \leq n-1$. But in this case we
would have that $\calR_v=0$ (see Lemma \ref{le:nopole}),
contradicting our assumption.

\smallskip
 By (\ref{eq:nu}),
(\ref{eq:totali}) and (\ref{eq:mult}) we have
\begin{equation}\label{eq:**}
\nu_v=\sum_{j=1}^n d(D/d_j)i_j +Di - (n-1)dD = dk+Di
\end{equation}
and
$$
N_v=\sum_{j=1}^n d(D/d_j)N_j +DN.
$$
We have further that
$$
N_v \calR_v= 1-n+\sum_{j=1}^n \frac
{d_j}{i_j-(\nu_v/N_v)N_j}+\frac d{i-(\nu_v/N_v)N}.
$$
{\bf Case $\mathbf{n\geq 3}$.} We replace the triple $(i_1,i_2,i_3)$ by
$(i_1+xd_1,i_2+yd_2,i_3-(x+y)d_3)$ where $x,y\in\Z$. Then $\nu_v$
does not change, but the three corresponding terms in $N_v
\calR_v$ are replaced by
$$
\frac {d_1}{i_1+xd_1-(\nu_v/N_v)N_1}+\frac
{d_2}{i_2+yd_2-(\nu_v/N_v)N_2}+\frac
{d_3}{i_3-(x+y)d_3-(\nu_v/N_v)N_3}.
$$
It is easy to see that this expression is not constant as function
in $x$ and $y$; hence we can choose appropriate $x$ and $y$ in
$\Z$ such that the \lq new\rq\ $\calR_v$ is different from the
original one. (Note that divisibility of $i_\ell$ by $d_\ell$ does
not change, so we don't destroy allowedness.) We then extend this
new $P$-divisor from $S_v\cup \Gamma_R$ further to an allowed
divisor on the whole diagram.

\smallskip
\noindent {\bf  Case $\mathbf{n=2}$.} We replace the pair $(i_1,i_2)$ by
$(i_1+xd_1,i_2-xd_2)$ where $x\in\Z$. Again $\nu_v$ does not
change, and now the two corresponding terms in $N_v \calR_v$ are
replaced by
$$
\frac {d_1}{i_1+xd_1-(\nu_v/N_v)N_1}+\frac
{d_2}{i_2-xd_2-(\nu_v/N_v)N_2}.
$$
When this expression is not constant in $x$, we conclude as above.
It is constant in $x$ if and only if it is identically zero if and
only if
$$
d_1i_2+d_2i_1 - \frac{\nu_v}{N_v}(d_1N_2+d_2N_1) = 0.
$$
Suppose this identity holds. Then the formulas for $\nu_v$ and
$N_v$ above easily yield that $\nu_v/N_v= (i-d)/N$. But then the
(original) $N_v\calR_v$ would be equal to
$$
-1+\frac d{i-(\nu_v/N_v)N} = 0,
$$
contradicting the assumption.
\end{proof}

\begin{remark} \labelpar{re:effective} For plane curve singularity germs $f$, the associated allowed $W$ in Theorem \ref{th:ext} are always divisors of differential forms $\omega$. From the proof of the more general Theorem \ref{thm:allrealized}, we will see that there exist moreover infinitely many \emph{effective} allowed divisors $W$ doing the job in the theorem, corresponding here in the plane curve case to \emph{holomorphic} differential forms $\omega$.
\end{remark}

\section{Diagrams $\Gamma(F)$ with the semigroup condition}\labelpar{s:7}

\subsection{The semigroup condition}\labelpar{ss:SC}  Let us fix a diagram $\Gamma(F)$.
The reader is invited to recall the definition of the {\it
semigroup condition} associated with $\Gamma(F)$ from
(\ref{be:semigro}).

The semigroup condition of $\Gamma(F)$
is equivalent with the following property: for any edge $e$ (as in
the following diagram, see also (\ref{be:fo})), such that  ${\mathcal A}_{F,L}=\emptyset$,
 $d'$ is in the  semigroup ${\mathcal S}_e$
 generated by $\{l_{ew}\}_w$ where the index $w$ runs over
 all the boundary vertices of $\Gamma _L$.
 (For the definition of $l_{ew}$ see (\ref{be:f}).)

\begin{picture}(200,55)(-100,-5)
\put(40,20){\circle*{4}} %\put(80,20){\circle*{4}}
\put(120,20){\circle*{4}} \put(40,20){\line(1,0){80}}
\put(40,20){\line(-2,-1){30}} \put(40,20){\line(-2, 1){30}}
\put(20,23){\makebox(0,0){$\vdots$}} \put(120,20){\line(2,-1){30}}
\put(120,20){\line(2, 1){30}}
\put(137,23){\makebox(0,0){$\vdots$}}
%\put(120,0){\makebox(0,0){$v$}}
 \put(110,26){\makebox(0,0){$d'$}} %\put(55,26){\makebox(0,0){$d$}}
\put(80,30){\makebox(0,0){$e$}}
\end{picture}

This condition appears naturally in the context of {\it splice
quotient singularities}, introduced by Neumann and Wahl
\cite{NW1,NWuj}. For some special diagrams this condition is
automatically satisfied. For example, if $\Gamma$ represents a
rational germ (which in the context of
 IHS germs is equivalent with the fact that $\Gamma$ represents either the smooth
 or the $E_8$ germ), and the diagram is not necessarily minimal and $F$ is arbitrary, then
 $\Gamma(F)$ has the semigroup condition. Another case is when $\Gamma$ is minimal and it represents a
 minimally elliptic (automatically Gorenstein) singularity. These facts follow from the
 `End Curve Theorem' \cite{NWEC,Ok}.

It is convenient to denote
the {\it subsemigroup} of $\N$ generated by  $g_1,\ldots g_t$ by $\calS\langle g_1,\ldots, g_t\rangle$.

\subsection{Preliminary arithmetical properties.}

Here we gather some arithmetical properties which will be useful in
the proofs of the  main results  of this section (listed in the next subsection).

\begin{lemma} \labelpar{le:elem} Let $d_1,\dots,d_n$ be pairwise coprime positive
integers, and denote $D:=\prod_{j=1}^n d_j$. Then the following two facts hold.

(a)  There exist no  positive integers $m_j$ such  that
$$
\sum_{j=1}^n m_j\frac D{d_j} = (n-1)D.
$$
(b) If $d\in\calS\langle D/d_1,\dots,D/d_n\rangle$, $d>0$ and $d|D$,
then $(D/d_j)\mid d$ for some $j\in\{1,\dots,n\}$.
\end{lemma}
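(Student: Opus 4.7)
Both parts rely on reducing the defining equation modulo each $d_k$ and exploiting two consequences of pairwise coprimality of the $d_j$: that $d_k \mid D/d_j$ for every $j\neq k$, and that $\gcd(d_k, D/d_k) = 1$.

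For (a), I would reduce $\sum_j m_j (D/d_j) = (n-1)D$ modulo $d_k$. Every term with $j\neq k$, as well as the right-hand side, is divisible by $d_k$, so $m_k(D/d_k)\equiv 0\pmod{d_k}$. Coprimality of $d_k$ and $D/d_k$ forces $d_k\mid m_k$, hence $m_k\geq d_k$ (since $m_k\geq 1$). Summing yields $\sum_k m_k(D/d_k)\geq \sum_k D = nD > (n-1)D$, a contradiction.

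For (b), I would set up the following reformulation. By pairwise coprimality of the $d_l$, every divisor $d$ of $D$ decomposes uniquely as $d=\prod_l a_l$ with $a_l\mid d_l$, and correspondingly $e:=D/d=\prod_l e_l$ with $a_l e_l = d_l$. The desired conclusion $(D/d_{j_0})\mid d$ for some $j_0$ is equivalent to $e\mid d_{j_0}$, i.e. to $e_l=1$ for all $l\neq j_0$. Hence it suffices to prove that the set $S:=\{l:e_l>1\}$ contains at most one element. I would argue by contradiction, assuming $|S|\geq 2$, via three steps:
\begin{enumerate}
\item[(i)] Reducing $d=\sum_j m_j(D/d_j)$ modulo $d_k$ gives $m_k(D/d_k)\equiv d \pmod{d_k}$. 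Combining $a_k\mid d$ and $a_k\mid d_k$ yields $a_k\mid m_k(D/d_k)$, and since $\gcd(a_k,D/d_k)$ divides $\gcd(d_k,D/d_k)=1$, we obtain $a_k\mid m_k$ whenever $m_k>0$.
\item[(ii)] If some $j\in S$ had $m_j>0$, then by (i) $m_j\geq a_j$, so $d\geq a_j(D/d_j)=D/e_j$, giving $e\leq e_j$. But $e=\prod_l e_l$ contains factors $\geq 2$ from each $l\in S$; thus $|S|\leq 1$, contradicting $|S|\geq 2$. Therefore $m_j=0$ for every $j\in S$.
\item[(iii)] The set $T:=\{j:m_j>0\}$ is contained in $S^c$, where $a_j=d_j$, so by (i) $m_j\geq d_j$. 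Consequently $d=\sum_{j\in T} m_j(D/d_j)\geq |T|\cdot D$. Since $d\mid D$ gives $d\leq D$, we conclude $|T|\leq 1$: either $T=\emptyset$ (so $d=0$) or $|T|=1$ with $d=D$ (so $e=1$, $|S|=0$). Either way we contradict $|S|\geq 2$.
\end{enumerate}

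The technical heart, and the step I expect to require the most care, is the divisibility $a_k\mid m_k$ of (i); once it is established, the remaining steps of (b) are straightforward size comparisons using $d\mid D$. Both (a) and (b) thus hinge on the same mod-$d_k$ mechanism, and the arithmetic does not require any geometric input from the preceding sections.
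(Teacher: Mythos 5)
Your proof is correct, and part (a) is identical to the paper's argument. For part (b), your step (i) — that $a_k\mid m_k$ — is the same key observation as the paper's (there written as $\bar d_j\mid m_j$). The paper then concludes a bit more directly: writing $d=\prod_\ell \bar d_\ell$, from $\bar d_j\mid m_j$ and $\prod_{\ell\ne j}\bar d_\ell\mid D/d_j$ it follows that $d\mid m_j\,(D/d_j)$, so every nonzero term of $\sum_j m_j(D/d_j)=d$ is already $\geq d$; hence exactly one $m_j$ is nonzero and that one gives $(D/d_j)\mid d$. Your two-stage case analysis (first forcing $m_j=0$ for $j\in S$, then bounding $|T|$) reaches the same conclusion and is correct, just a touch less economical than the single divisibility observation.
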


\begin{proof} (a) In such an equality we would have that $d_j|m_j$ for
all $j=1,\dots,n$. But then the left hand side would be at least
$nD$.
In (b), by assumption,  we can write $d$ in the form
\begin{equation}\label{eq:DIVDIV}
d=\sum_{j=1}^n m_j \frac D{d_j},\end{equation}
where all $m_j$ are nonnegative integers, and also
$d=\prod_{j=1}^n \bar{d}_j$
with $\bar{d}_j|d_j$ for all $j$. Since the $d_j$ are pairwise coprime,
(\ref{eq:DIVDIV}) shows that  $\bar{d}_j|m_j$ for all $j$. Writing
$d$ as $\bar{d}_j\prod_{\ell\neq j}\bar{d}_\ell$, we conclude that
$d$ divides $m_j D/d_j$
for all $j$. If at least two of the numbers $m_j$
would be nonzero, say $m_1\neq 0$ and $m_2\neq 0$, we obtain the
contradiction
$$
d\geq m_1 \frac D{d_1} + m_2 \frac D{d_2} \geq d+d.
$$
Hence exactly one $m_j$ is nonzero, implying then that
$(D/d_j)\mid d$.
\end{proof}

Recall that a diagram $\Gamma$ is called {\it minimal} if all the
decorations $d_{ve}$ are strictly greater than 1, provided that
$e$ connects the node $v$ with a boundary vertex.

\begin{proposition} \labelpar{prop:div} Let $\Gamma(F)$ be a splice diagram as
in (\ref{ss:2}) (hence with $W'=0$) and minimal in the above
sense. Let  $e$ be an edge connecting two nodes such that $\calA_{F,L}=\emptyset$, and set
$i'-1$ the multiplicity of the induced dashed arrowhead at $v_R$
after splicing $\Gamma$ along $e$ as in (\ref{be:fo}).

\begin{picture}(400,65)(10,-10)

\put(40,20){\circle*{4}} %\put(80,20){\circle*{4}}
\put(120,20){\circle*{4}} \put(40,20){\line(1,0){80}}
\put(40,20){\line(-2,-1){30}} \put(40,20){\line(-2, 1){30}}
\put(20,23){\makebox(0,0){$\vdots$}} \put(120,20){\line(2,-1){30}}
\put(120,20){\line(2, 1){30}}
\put(137,23){\makebox(0,0){$\vdots$}}
%\put(120,0){\makebox(0,0){$v$}}
 \put(110,26){\makebox(0,0){$d'$}} %\put(55,26){\makebox(0,0){$d$}}
\put(80,30){\makebox(0,0){$e$}}

\put(160,20){\vector(1,0){40}}
\put(164,25){\makebox(0,0)[l]{\tiny{splicing}}}

\put(240,20){\circle*{4}}
\put(370,20){\circle*{4}}
\put(240,20){\vector(1,0){30}}
\put(240,20){\line(-2,-1){30}} \put(240,20){\line(-2, 1){30}}
\put(220,23){\makebox(0,0){$\vdots$}}
%\put(400,0){\makebox(0,0){$v$}}
\put(395,26){\makebox(0,0){$d'$}}
\put(330,20) {\makebox(0,0){\tiny{$i'-1$}}}

\dashline[3]{3}(350,20)(370,20)\put(350,20){\vector(-1,0){5}}

\put(400,20){\circle*{4}}
\put(400,20){\line(2,-1){30}} \put(400,20){\line(2, 1){30}}
\put(284,20){\makebox(0,0){$(M)$}}
\put(427,23){\makebox(0,0){$\vdots$}}
\put(400,20){\line(-1,0){30}}

\put(400,50){\makebox(0,0){$\Gamma_R$}}
\put(260,50){\makebox(0,0){$\Gamma_L$}}
\put(230,35) {\makebox(0,0){$d_1$}}
\put(230,5){\makebox(0,0){$d_{n}$}}

\end{picture}

 (1) Then $i'<0$.

 (2)  Assume that the semigroup condition is satisfied in $\Gamma(F)$ (at least for the edge $e$ and
 the edges $e_L$ sitting in  $\Gamma_L$).  (This means that  $d'$ is in the  semigroup ${\mathcal S}_e$
 generated by $\{l_{ew}\}_w$ where the index $w$ runs over
 all the boundary vertices of $\Gamma _L$, and there are similar inclusions for all edges $e_L$ of
 $\Gamma_L$.)   Then
 $-i'\not\in {\mathcal S}_e$. Hence, $d'\nmid i'$.
\end{proposition}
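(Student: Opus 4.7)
The plan is to prove both parts by induction on the number of nodes of $\Gamma_L$, using the symmetric analogue of~(\ref{eq:totali}), namely $i' = \sum_{v\in\bar{\calV}(\Gamma_L)}(2-\delta_v)\ell_{ev}$, together with the fact that $\calS_e$ is the subsemigroup of $\Z_{\geq 0}$ generated by $\ell_{ew}$ for $w$ a boundary vertex of $\Gamma_L$. For the base case $\Gamma_L$ is star-shaped at $v_L$, with $n$ boundary legs carrying pairwise coprime weights $d_1,\ldots,d_n\geq 2$ (minimality). Writing $D:=\prod_j d_j$, a direct computation yields
\begin{equation*}
i' = -(n-1)D + \sum_{j=1}^n \frac{D}{d_j}, \qquad \calS_e = \calS\langle D/d_1,\ldots,D/d_n\rangle.
\end{equation*}
For (1), pairwise coprimality and $d_j\geq 2$ force $\sum_j 1/d_j < n-1$, giving $i'<0$. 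For (2), if $-i' = \sum_j c_j\,D/d_j$ with $c_j\geq 0$, then $(n-1)D = \sum_j(c_j+1)D/d_j$ with every $c_j+1\geq 1$, contradicting Lemma~\ref{le:elem}(a). The divisibility statement $d'\nmid i'$ follows at once: $d'\in\calS_e$ forces $kd'\in\calS_e$ for every $k\geq 0$.

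For the inductive step, I choose an extremal node $u'$ in $\Gamma_L$: all but one of its edges go to boundary vertices of $\Gamma_L$ carrying weights $d_1,\dots,d_n$ (set $D:=\prod_j d_j$), and the remaining edge $e_1$ connects $u'$ to another node $u$ of $\Gamma_L$, with weights $d^*$ at $u'$ and $d^{**}$ at $u$. Let $\Gamma'_L$ be the diagram obtained by splicing $\Gamma_L$ along $e_1$ and retaining the side containing $v_L$: the $u'$-star is replaced by a single new boundary vertex $\bar u$ attached to $u$ via weight $d^{**}$. Setting $\alpha := \ell_{\bar v_L,\bar u}^{\Gamma'_L}>0$ and $i^\star := -(n-1)D+\sum_j D/d_j$ (the base-case value at $u'$), formula~(\ref{eq:totali}) applied splice-wise at $e_1$ gives the splice identity
\begin{equation*}
i'[\Gamma_L] = i'[\Gamma'_L] + \alpha\,(i^\star - 1).
\end{equation*}
Part~(1) is immediate: the base case ensures $i^\star<0$, hence $i^\star-1<0$; combined with the inductive hypothesis $i'[\Gamma'_L]<0$, one obtains $i'[\Gamma_L]<0$.

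For~(2), suppose for contradiction $-i'[\Gamma_L]\in\calS_e[\Gamma]$. The generators of $\calS_e[\Gamma]$ are $\alpha\cdot D/d_j$ (one for each star-boundary at $u'$) together with $\ell_{\bar v_L,w}^{\Gamma'_L}$ for the remaining boundaries $w\neq\bar u$ of $\Gamma'_L$, hence
\begin{equation*}
-i'[\Gamma_L] \,=\, \alpha\sum_j a_j\,\frac{D}{d_j} + \sum_w b_w\,\ell_{\bar v_L,w}^{\Gamma'_L}, \qquad a_j,b_w\in\Z_{\geq 0}.
\end{equation*}
Substituting the splice identity yields $-i'[\Gamma'_L] = \alpha\cdot B + \sum_w b_w\,\ell_{\bar v_L,w}^{\Gamma'_L}$ with $B := \sum_j(a_j+1)D/d_j - (n-1)D - 1$. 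If $B\geq 0$, then $\alpha$ is itself a generator of $\calS_e[\Gamma'_L]$ (since $\bar u$ is a boundary of $\Gamma'_L$), so $-i'[\Gamma'_L]\in\calS_e[\Gamma'_L]$, contradicting the inductive hypothesis. If $B<0$, I invoke the semigroup condition at the pair $(u,e_1)$ in $\Gamma$, applicable because the $u'$-side of $e_1$ has no arrowheads ($\calA_{F,L}=\emptyset$); as in the discussion after~(\ref{be:semi}) it reduces to $d^{**}\in\calS\langle D/d_1,\dots,D/d_n\rangle$. Writing $d^{**}=\sum_j e_j\,D/d_j$ with $e_j\geq 0$ and absorbing an appropriate multiple of this identity into the $a_j$'s, a careful rearrangement produces an equality of the form $(n-1)D = \sum_j f_j\,D/d_j$ with every $f_j\geq 1$, once more contradicting Lemma~\ref{le:elem}(a). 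Finally $d'\nmid i'[\Gamma_L]$ follows as in the base case.

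The main obstacle is precisely the case $B<0$ in the inductive step of~(2): the splice identity alone exhibits $-i'[\Gamma'_L]$ with a \emph{negative} coefficient on the generator $\alpha$, which does not directly contradict the inductive hypothesis. The semigroup condition on the internal edge $e_1$ is essential here, supplying the non-negative representation $d^{**}=\sum_j e_j D/d_j$ that allows one to \emph{absorb} the negative term $\alpha B$ into the generators $\alpha\cdot D/d_j$ and close the argument via Lemma~\ref{le:elem}(a).
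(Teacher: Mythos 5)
Your splice identity $i'[\Gamma_L] = i'[\Gamma'_L] + \alpha(i^\star-1)$ is correct and elegant, but the induction breaks down because $\Gamma'_L$ need not be a valid instance of the proposition: the new boundary vertex $\bar u$ carries weight $d^{**}$, an internal edge weight of $\Gamma$, and the minimality hypothesis only forces weights on edges \emph{to boundary vertices of $\Gamma$} to be $\geq 2$. So $d^{**}=1$ is allowed, $\Gamma'_L$ is then non-minimal, and the inductive claim $i'[\Gamma'_L]<0$ can fail. Concretely, take $\Gamma_L$ with two nodes $v_L$ and $u'$: $v_L$ has one boundary leg of weight $2$, the weight on $e_1=(v_L,u')$ at the $v_L$-side is $1$ (and $13$ at the $u'$-side, so the edge determinant of $e_1$ is $1>0$), $u'$ has boundary legs of weights $2,3$, and $e$ has weight $1$ at $v_L$; all pairwise coprimality conditions hold and $\Gamma$ is minimal. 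Then $\Gamma'_L$ is the star at $v_L$ with boundary weights $2$ and $1$, and $i'[\Gamma'_L]=-(2-1)\cdot 2+2/2+2/1=+1>0$. (Your formula still gives $i'[\Gamma_L]=1+2\cdot(-2)=-3<0$ via $\alpha=2$, $i^\star=-1$, so the \emph{conclusion} holds, but not by your argument; and passing to the minimal form of $\Gamma'_L$ does not help here, since after deleting $\bar u$ the vertex $v_L$ drops to valency $2$ and $e$ no longer connects two nodes, so the proposition has nothing to say about it.)

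The same minimality defect undercuts part (2), and there is a second gap in its $B<0$ branch. Writing $d^{**}=\sum_j e_j D/d_j$ and ``absorbing a multiple into the $a_j$'s'' changes $\alpha\sum_j a_j D/d_j$ by $t\alpha d^{**}=t\,\ell^{\Gamma'_L}_{e,u}$, and since $u$ is a \emph{node} of $\Gamma'_L$ (not a boundary vertex), $\ell^{\Gamma'_L}_{e,u}$ is not a generator of $\calS_e[\Gamma'_L]$. So there is nowhere to shift this correction while keeping a nonnegative combination of generators, and the asserted equality $(n-1)D=\sum_j f_j D/d_j$ with all $f_j\geq 1$ does not follow from what you wrote.

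The paper's proof avoids both problems by expanding the defining sum for $i'$ over the legs of $v_L$ itself, rather than splicing at an extremal node far from $e$: the sub-diagrams $\calV_j$ appearing in that nested expansion have only \emph{original} boundary vertices of $\Gamma$, hence are automatically minimal. For part (2) the paper proves the strictly stronger claim that $I:=\sum_{w\in\calB_L}m_w\ell_{ew}+\sum_{w\in\calN_L}(2-\delta_w)\ell_{ew}$ is nonzero and satisfies $-I\notin\calS_e$ whenever $I<0$, for \emph{arbitrary} positive integers $m_w$; this extra freedom at boundary vertices is what closes the induction without any need to ``absorb'' anything, together with the divisibility observation that $d_j$ divides the $j$-th bracket in the nested expansion (used with $d_j\in\calS_{e_j}$ to force all brackets positive, contradicting Lemma~\ref{le:elem}(a)).
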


\begin{proof} (1)  Set $D:=\prod_{j=1}^n d_j$. We proceed by induction on the number of nodes in
$\Gamma_L$.

%\begin{picture}(100,55)(200,-5)
%\put(370,20){\circle*{4}}
%\put(370,20){\vector(1,0){30}}
%\put(370,0){\makebox(0,0){$v_R$}}\put(300,20){\makebox(0,0){$\Gamma_R:$}}
%\put(370,20){\line(-2,-1){30}} \put(370,20){\line(-2, 1){30}}
%\put(387,23){\makebox(0,0){$\vdots$}}\put(410,20){\makebox(0,0){$\ldots$}}
%\dashline[3]{3}(240,22)(265,22)\put(265,22){\vector(1,0){5}}
%\dashline[3]{3}(370,22)(395,22)\put(395,22){\vector(1,0){5}}
%\end{picture}

 Suppose first that $v_L$ is the only
node of $\Gamma_L$. By (\ref{eq:i}) we have that $i'=\sum_{j=1}^n
D/d_j - (n-1)D$. When $n=2$, this is $d_1+d_2-d_1d_2$ and thus
negative.  When $n>2$, then by minimality $d_j\geq 2$, hence $\sum
1/d_j\leq n/2<n-1$, therefore $i'<0$ again.
%$$
%i=d_r\left( \sum_{j=1}^{r-1}(\prod_{\ell=1}^{r-1}d_\ell)/d_j -
%(r-2) (\prod_{\ell=1}^{r-1}d_\ell)\right) + (D/d_r - D).
%$$
%Since both terms are negative (the first one by induction on $r$),
%so is $i$.

We  suppose now  that $\Gamma_L$ contains at least two nodes. From
(\ref{eq:i}) we can write $i'$ as
$$
i'=\sum_{j=1}^n \frac{D}{d_j}\left(\sum_{w\in\calV_j} (2-\delta_w)
\ell_{e_jw} \right) - (n-1)D,
$$
where for $j=1,\dots,n$ the set $\calV_j$ consists of the vertices
of $\Gamma_L$ connected (geodesically) to $v_L$ through the edge
$e_j$ with weight $d_j$, and $\ell_{e_jw}$ is the product of all
the decorations adjacent to, but not on, the path from $w$ to
$e_j$. For all $j$ this sum is either equal to 1 (when $e_j$ ends
at a boundary vertex), or negative by induction. Since at least
one sum is negative, we conclude that $i'<0$.

(2) Denote by $\calB_L$ and $\calN_L$ the boundary vertices and
nodes, respectively, in  $\Gamma_L$.  We will
show the following claim. {\it Let
$$
I:=\sum_{w\in\calB_L}m_w \ell_{ew} + \sum_{w\in\calN_L}(2-\delta_w)
\ell_{ew},
$$
where all $m_w \in \Z_{>0}$. Then $I\neq 0$ and, if $I < 0$, then
$-I\not\in {\mathcal S}_e$.}

Since $i'=I$ when all $m_w=1$, and $i'<0$ by part (1), the statement
then follows.

\medskip
We now prove the claim, again by induction on the cardinality of
$\calN_L$. If $\calN_L=\{v_L\}$ then
$$
I=\sum_{j=1}^n m_j \frac D{d_j} - (n-1)D,
$$
and this is nonzero by Lemma \ref{le:elem}(a). If $I<0$ and $-I\in{\mathcal S}_e$, then
%$\lambda d= -\sum_{j=1}^r m_w \frac D{d_j} + (r-1)D$ for some
%$\lambda \in \Z_{> 0}$. Since $d$ belongs to the semigroup
%generated by $D/d_1,\dots,D/d_r$, we conclude that
$
\sum_{j=1}^n k_j \frac D{d_j} = (n-1)D
$
for some positive integers $k_j$, contradicting again Lemma \ref{le:elem}(a).

Let now $\calN_L$ have at least two elements. Suppose again that
$I< 0$ and $-I\in{\mathcal S}_e$. Then analogously
%since $d$ belongs to the semigroup generated by the $\ell_w, w\in \calB_R,$
we get that
$$
\sum_{w\in\calB_L} k_w\ell_{ew} = \sum_{w\in\calN_L}
(\delta_w-2)\ell_{ew}
$$
for some positive integers $k_w$. We separate $v_L$ (with
$\delta_{v_L}=n+1$ and $\ell_{ev_L}=D$) on the right hand side, and
rewrite this equality  as
\begin{equation}\label{eq:ind}
\sum_{j=1}^n  \frac D{d_j}\left[ \sum_{w\in\calB_L^{(j)}} k_w
\ell_{e_jw} +  \sum_{w\in\calN_L^{(j)}} (2-\delta_w)
\ell_{e_jw}\right] = (n-1)D,
\end{equation}
where $\calB_L^{(j)}\cup\calN_L^{(j)} =\calV_j$. By induction the
square bracket is non--zero. Moreover, (\ref{eq:ind}) shows that
 $d_j$ divides the $j$-th  square bracket for all $j$.
 Applying the induction hypothesis on all these terms
 (and the assumption $d_j\in {\mathcal S}_{e_j}$)
yields that all these square brackets  are positive. But this contradicts Lemma
\ref{le:elem}(a).

We still have to show that $I\neq 0$. Assuming that $I=0$ yields
the same expression as in (\ref{eq:ind}), with the $k_w$ replaced
by the original $m_w$. And then we obtain a contradiction by the
same argument.
\end{proof}

\begin{remark}\labelpar{re:Milnornumber} Let us give the `Milnor number interpretation'
of the statement (\ref{prop:div})(1). Consider the splice diagram
$\Gamma_L$, but replace the multiplicity $M$ of the unique
arrowhead by 1. This represents a fibrable knot; let  $S$ be its
fiber. It is a connected punctured Riemann surface. Let its first
Betti number be $\mu$ (the Milnor number). Clearly, $\mu$ is even.
Since $\Gamma_L$ is minimal and non--empty, $\mu\not=0$ (its proof
is basically our proof of (1)). On the other hand, by \cite{AC},
$i'=\chi(S)=1-\mu$, hence $i'<0$.

The second part also has some `classical' interpretation. Start
again with the fact $-i'=\mu-1$, and assume that the above diagram
represents a plane curve singularity. Then  ${\mathcal S}_e$ is
exactly the semigroup ${\mathcal S}$ of the plane curve, and it is
a classical fact that $\mu-1$ is the largest integer not in
${\mathcal S}$.

The point is that in any  generalization  of $\mu-1\not\in{\mathcal S}$
for more general $\Gamma_L$ (as our (2) does)
 one needs some restriction about $\Gamma_L$: for example,  if we have two nodes,
the second one in ${\mathcal V}_1$, and $d_1=1$, then ${\mathcal
S}_e=\mathbb{N}$.
\end{remark}

\subsection{$W=0$ is allowed.}

\begin{theorem} \labelpar{thm:W} Let $F$ be a (nonzero) effective divisor on an
IHS germ $(X,0)$, such that the minimal embedded resolution
diagram $\GaxF$ satisfies the semigroup condition. Then the
divisor $W=0$ is allowed for the pair $(X,F)$.
\end{theorem}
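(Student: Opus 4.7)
The plan is to verify directly that for every star-shaped subdiagram $S$ obtained by repeatedly splicing $\Gamma:=\Gamma_\pi(X,F,W=0)$, the allowedness condition from Definition~\ref{def:alloweddiagram}(2) is satisfied. The key observation is that it suffices to prove the following claim: \emph{every boundary vertex $\ell$ of such a star-shaped subdiagram $S_v$ (centered at a node $v\in\calN$) satisfies $d_\ell \nmid i_\ell$}, where $d_\ell$ is the weight of the edge from $v$ to $\ell$ and $i_\ell-1$ is the decoration of the dashed arrow at $\ell$ (with the convention $i_\ell=1$ when no dashed arrow appears). Once this is established, the condition of (\ref{allow.2}) is vacuous in every case: when $r\geq 3$ there is nothing to check; when $r=2$ the hypothesis ``$d_\ell\mid i_\ell$ for all $\ell$'' fails for every $\ell$; when $r=1$ (which forces $n\geq 2$ since $v$ is a node), the number of $\ell$ with $d_\ell\mid i_\ell$ is $0<n-1$, so the hypothesis again fails.

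To prove the claim I would distinguish two cases according to the type of boundary vertex. \textbf{Case A} (original boundary vertex): the leg from $v$ in direction of $\ell$ ends, already in $\Gamma$, at a boundary vertex of $\Gamma$. Since $W=0$, no dashed arrow is present, hence $i_\ell=1$. The minimality of $\Gamma_\pi(X,F)$ forces $d_\ell>1$ on such a leg, so $d_\ell \nmid i_\ell=1$. \textbf{Case B} (new boundary vertex from splicing): the leg from $v$ in direction of $\ell$ reaches a further node $w$ in $\Gamma$, and the connected component $\Gamma_L$ of $\Gamma\setminus\{v\}$ beyond $w$ contains \emph{no} arrowheads of $F$ (otherwise, after full splicing, this direction in $S_v$ would end at an arrowhead, not a boundary vertex). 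In this setting, the induced decoration $i_\ell-1$ of the new dashed arrow at $\ell$ is exactly the number $i'$ of formula (\ref{eq:i}) applied to splicing along the edge carrying weight $d_\ell=d'$ at $v$. Applying Proposition~\ref{prop:div}(2) to this edge, whose hypotheses hold thanks to the standing assumption that $\Gamma_\pi(X,F)$ satisfies the semigroup condition (and in particular for the edge under consideration and all edges inside $\Gamma_L$), we conclude $d_\ell = d' \nmid i' = i_\ell$.

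With both cases handled, the claim is proved for every star-shaped subdiagram obtained by splicing, and hence $W=0$ satisfies the allowedness Definition~\ref{def:alloweddiagram}. The main obstacle in this argument is the Case B input: Proposition~\ref{prop:div}(2) is precisely the tool that controls the induced multiplicities under the semigroup condition, and it is exactly here that the hypothesis of the theorem intervenes. Everything else is a bookkeeping verification matched against the three cases $r\geq 3$, $r=2$, $r=1$ of the allowedness definition, using that boundary vertices come in only the two flavours A and B above.
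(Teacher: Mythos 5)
Your proof is correct and takes essentially the same route as the paper: original boundary vertices are handled by minimality of the resolution ($d_\ell>1$, $i_\ell=1$), and boundary vertices created by splicing by Proposition~\ref{prop:div}(2), which is precisely where the semigroup hypothesis enters. The only cosmetic difference is that you spell out explicitly why $d_\ell\nmid i_\ell$ for every boundary vertex renders the hypotheses of Definition~\ref{def:alloweddiagram}(\ref{allow.2}) vacuous in all three cases $r\geq 3$, $r=2$, $r=1$, whereas the paper phrases the same conclusion via a somewhat loose reference to the Addendum (\ref{eq:addendum}).
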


\begin{proof}
Denoting by $\pi$ this minimal embedded resolution, we will show
that the diagram $\Gamma= \Gamma_\pi(X,F,W=0)$ is allowed. Note
that thus the decorations $i_a-1=0$ for all $a\in \calA_W$.

Recall again that on any star-shaped subdiagram of $\Gamma$
without boundary vertex the allowedness condition is trivially
satisfied. If a star-shaped subdiagram contains a boundary vertex,
that is an original boundary vertex of $\Gamma$, then the
corresponding leg decoration (being $>1$ by minimality) does not
divide the associated $i_a(=1)$.

If a star-shaped subdiagram contains a boundary vertex, that is
created after splicing, that diagram looks like the right diagram
in the statement of Proposition \ref{prop:div}, where $i'-1$ is
the decoration of the constructed dashed arrow attached to that
boundary vertex, and $d'$ is the corresponding edge weight. Since
we showed in Proposition \ref{prop:div} that $d'\nmid i'$, the
allowedness condition is verified in this case too (hence
everywhere)
 by  Addendum (\ref{eq:addendum}).
\end{proof}

\begin{example}\labelpar{ex:EXAMPLE6}
 Recall that in Example~\ref{ex:EXAMPLE4}
 we presented a minimal diagram $\Gamma(F)$ for which  $W=0$ is not allowed.
 Hence some kind of restriction is indeed necessary in order to guarantee the allowedness
 of $W=0$.
\end{example}

\begin{corollary} Let $(X,0)$ be a Gorenstein IHS germ, with
nowhere vanishing 2-form $\omega_0$ on $X\setminus\{0\}$. Let $f$
be a function germ on $(X,0)$ such that the minimal embedded
resolution diagram $\Gaxf$ satisfies the semigroup condition.

If $s_0$ is a pole of the topological zeta function
$Z(f;s)=Z(\div(f),W=0;s)$, then $\exp(2\pi is_0)$ is a monodromy
eigenvalue of $f$ at some point of $\{f=0\}$.
\end{corollary}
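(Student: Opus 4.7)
The plan is to derive this corollary as an immediate consequence of the two preceding main results of this section, Theorem \ref{thm:monconj} and Theorem \ref{thm:W}. First I would rewrite the hypothesis in the formalism of Weil divisors from (\ref{bek:INTR1}): setting $F := \div(f)$ and $W := 0$, the Gorenstein assumption on $(X,0)$ together with the existence of the nowhere vanishing $2$-form $\omega_0$ on $X\setminus\{0\}$ guarantees that $K_\pi = \div(\pi^*\omega_0)$ (as divisors supported on the exceptional locus), so that the classical $Z(f;s)$ coincides exactly with $Z(F,W=0;s)$ as given in (\ref{I:zetadiagram}); in particular the multiplicities $\nu_i - 1$ that enter the formula are the multiplicities of $K_\pi$ along the $E_i$, which matches the convention used throughout the previous sections.

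Next I would apply Theorem \ref{thm:W} to the pair $(X,F)$. The standing hypothesis is precisely that the minimal embedded resolution diagram $\Gamma_\pi(X,f) = \Gamma_\pi(X,F)$ satisfies the semigroup condition in the sense of (\ref{be:semigro}). Theorem \ref{thm:W} then asserts that the divisor $W=0$ is allowed for $(X,F)$; equivalently, for the minimal embedded resolution $\pi$ the decorated diagram $\Gamma_\pi(X,F,W=0)$ satisfies Definition~\ref{def:alloweddiagram}.

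Finally I would feed this allowedness into Theorem \ref{thm:monconj}, applied to $f$ and the allowed divisor $W=0$. That theorem gives directly that any pole $s_0$ of $Z(f,W;s) = Z(f;s)$ has the property that $\exp(2\pi i s_0)$ is a monodromy eigenvalue of $f$ at some point of $\{f=0\}$, which is exactly the statement of the corollary. There is no real obstacle here: the genuine work has been carried out in the two invoked theorems, and the role of this corollary is simply to package the special case $W=0$ as a generalization of the classical monodromy conjecture to Gorenstein IHS germs satisfying the semigroup condition. The only care needed in writing up is the bookkeeping comparison of $K_\pi + \pi^* W$ in the smooth/Gorenstein formulation with the combinatorial $\nu_v$'s used from (\ref{be:KK}) onwards, but this identification is tautological once $\omega_0$ is nowhere vanishing on $X \setminus \{0\}$.
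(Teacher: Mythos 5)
Your proposal is correct and follows exactly the paper's argument: Theorem \ref{thm:W} gives that $W=0$ is allowed under the semigroup condition, and Theorem \ref{thm:monconj} then yields the eigenvalue statement. The paper's proof is the same two-line deduction, so your extra remarks on the Gorenstein bookkeeping are sound but not strictly needed.
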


\begin{proof} Immediate from Theorem \ref{thm:W} and Theorem
\ref{thm:monconj}.
\end{proof}

\begin{remark}
In Theorem 2.2 of \cite{Ro}, Rodrigues showed without requiring the semigroup condition
that even in the singular setting $\exp(2\pi is_0)$ is a monodromy eigenvalue
of $f$ provided that the pole $s_0$ satisfies $s_0 \leq 0$. We  wish to emphasize that this is a rather strong assumption
in the context of singular ambient spaces. Indeed, if we consider a {\em minimal}
resolution of a non--canonical surface singularity, then  the canonical cycle $K$ is nef, and all its
 coefficients $\nu-1$ are (strictly) negative, hence the corresponding values $-\nu/N$ are all non-negative.
 We can create negative poles when we have to blow up the minimal resolution in order to get a good resolution
 of $(X,F)$, so by subgraphs which behave like graphs of plane curve singularities.
 Usually there are only a few poles like this (although,  for plane curves all of them are negative
 by the very same argument).
\end{remark}

\subsection{All eigenvalues are realized by poles}
The main result of the subsection is based on the following technical proposition.

\begin{proposition}\labelpar{prop:TECH}
Let $(X,0)$ be an IHS germ and $f$ an analytic function on $X$,
such that the minimal embedded resolution diagram
$\Gamma=\Gamma_\pi(X,f)$ satisfies the semigroup condition. With
the notation of Remark~\ref{re:extend}(b), let us fix a node $v_0$
in $\Gamma_\calA$,  and another node  $v_m$ not in  $\Gamma_\calA$
at `distance' $m\geq 1$ from $v_0$. Consider the diagram
$\Gamma_0$ given below  obtained from $\Gamma$ by cutting via
splice--decomposition all the nodes not sitting on the geodesic
path connecting $v_0$ with $v_m$. Here the legs with decorations
$d_{0,2},\ldots, d_{0,n_0}$ are optional, and all boundary
vertices are either original boundary vertices of\, $\Gamma$, or
are obtained after splicing.

\begin{picture}(400,85)(-30,0)
\put(40,40){\circle*{4}} %\put(120,40){\circle*{4}}
\put(170,40){\circle*{4}} %\put(250,40){\circle*{4}}
\put(300,40){\circle*{4}}
\put(340,40){\circle*{4}}

% \put(105,10){\circle*{4}}
\put(155,10){\circle*{4}}
%\put(235,10){\circle*{4}}
\put(285,10){\circle*{4}}
\put(25,10){\circle*{4}}

% \put(135,10){\circle*{4}}
\put(185,10){\circle*{4}} %\put(265,10){\circle*{4}}
\put(315,10){\circle*{4}}
\put(55,10){\circle*{4}}

\put(40,40){\line(1,0){40}}\put(130,40){\line(1,0){80}}
\put(260,40){\line(1,0){80}}
\put(40,40){\line(-2,1){30}}\put(40,40){\line(-2,-1){30}}

\put(40,40){\line(-1,-2){15}}
\put(170,40){\line(-1,-2){15}} %\put(120,40){\line(1,-2){15}}
%\put(250,40){\line(-1,-2){15}}
\put(300,40){\line(-1,-2){15}}

\put(40,40){\line(1,-2){15}}
\put(170,40){\line(1,-2){15}} %\put(120,40){\line(-1,-2){15}}
%\put(250,40){\line(1,-2){15}}
\put(300,40){\line(1,-2){15}}

\put(52,45){\makebox(0,0){\tiny{$d_{0,1}$}}}
\put(28,26){\makebox(0,0){\tiny{$d_{0,n_0}$}}}
\put(55,31){\makebox(0,0){\tiny{$d_{0,2}$}}}

\put(313,45){\makebox(0,0){\tiny{$d_{m,1}$}}}
\put(283,31){\makebox(0,0){\tiny{$d_{m,n_m}$}}}
\put(316,31){\makebox(0,0){\tiny{$d_{m,2}$}}}
\put(292,46){\makebox(0,0){\tiny{$d_m$}}}

\put(182,45){\makebox(0,0){\tiny{$d_{k,1}$}}}
\put(153,31){\makebox(0,0){\tiny{$d_{k,n_k}$}}}
\put(185,31){\makebox(0,0){\tiny{$d_{k,2}$}}}
\put(162,46){\makebox(0,0){\tiny{$d_k$}}}

%\put(208,40){\makebox(0,0){$\ldots$}}
\put(10,43){\makebox(0,0){$\vdots$}}
\put(40,65){\makebox(0,0){$v_0$}}
%\put(120,65){\makebox(0,0){$v_1$}}
\put(170,65){\makebox(0,0){$v_k$}}
%\put(250,65){\makebox(0,0){$v_{m-1}$}}
\put(300,65){\makebox(0,0){$v_m$}}

\put(40,10){\makebox(0,0){$\ldots$}}
\put(105,40){\makebox(0,0){$\ldots$}}
\put(170,10){\makebox(0,0){$\ldots$}}
\put(235,40){\makebox(0,0){$\ldots$}}
\put(300,10){\makebox(0,0){$\ldots$}}
\end{picture}

Splice this diagram at the edge $(v_{k-1}v_{k}), 0<k\leq m,$ and
denote that splice component which contains $v_k,\ldots, v_m$ by
$\Gamma_k=\Gamma_k(F_k)$. Let the decoration of the
 dashed arrowhead  not in $\Gamma_1$, associated with the splicing   along $(v_0v_1)$, be
 $i_{0,1}-1$ (see the picture below).

Fix a root $\lambda$
of the Alexander polynomial $\Lambda_{\Gamma_m}(t)$ associated with the star--shaped diagram
$\Gamma_m$.   Then there exist infinitely many allowed divisors $W_m$ for
$\Gamma_m$, such that if $(N_m,\nu_m-1)$ denote the decorations of $v_m$ as above
associated with $\Gamma_m$ and $W_m$, then

\vspace{1mm}

(1) \ $s_0=-\nu_m/N_m$ is a pole of $Z(F_m,W_m;s)$, with
$\exp(2\pi i s_0)=\lambda$, and

(2) \ $W_m$ extends to an allowed divisor on $\Gamma_1$, such that
 $d_{0,1}\nmid i_{0,1}$.

\vspace{1mm}
\noindent Moreover, infinitely many of these allowed (extended) $W_m$ on $\Gamma_1$, as well as their further extensions (in the sense of Remark \ref{re:extend}(b)) on the whole of  $\Gamma$, may be chosen to be effective.

\end{proposition}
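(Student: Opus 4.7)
The plan is to mirror the strategy of Proposition~\ref{prop:extend2} in the singular IHS setting, replacing the positivity of plane-curve edge determinants by the semigroup condition via Proposition~\ref{prop:div}. First I would apply Lemma~\ref{le:pole} to the star-shaped $\Gamma_m$, producing infinitely many allowed effective $W_m$ on $\Gamma_m$ such that $s_0=-\nu_m/N_m$ is a pole of $Z(F_m,W_m;s)$ with $\exp(2\pi is_0)=\lambda$. The construction there provides complete freedom to prescribe the decorations $i_\ell,k_\ell$ modulo $N_m$, provided the residue at $s_0$ stays nonzero; this already settles part~(1).

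Next I would build $W_{m-1},\ldots,W_1$ on $\Gamma_{m-1},\ldots,\Gamma_1$ inductively, un-splicing along $(v_{k-1}v_k)$ at each stage. The left side of this splice is the star around $v_{k-1}$; it contains the single ordinary $F$-arrowhead at $v_{k-1}$ inherited from the previous splice, which does \emph{not} sit at the new boundary vertex created by the current splice. The right side $\Gamma_k$ carries no $F$-arrowhead of $\Gamma_{k-1}$. Neither clause of Proposition~\ref{prop:extend} is therefore satisfied, and I must invoke the Addendum~\eqref{eq:addendum}(a): it is enough to verify $d_{k-1}\nmid j_{k-1}$, where $j_{k-1}$ denotes the dashed-arrow multiplicity at $v_{k-1}$ induced from $\Gamma_k$ by formula~\eqref{eq:totali}.

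The technical tool here is a direct extension of the combinatorial claim in the proof of Proposition~\ref{prop:div}: for positive integers $m_w$ assigned to the boundary vertices, the sum $I=\sum_w m_w\ell_{ew}+\sum_{v\in\calN}(2-\delta_v)\ell_{ev}$ is never zero, and $-I\notin\mathcal{S}_e$ as soon as $I<0$. Applying this with $m_w=i_w\geq 1$ (so $W_k$ effective) together with the semigroup condition at the edge $(v_{k-1}v_k)$ gives $d_{k-1}\nmid j_{k-1}$, provided $j_{k-1}<0$. The same mechanism applied once more at the splice $(v_0v_1)$ forces $d_{0,1}\nmid i_{0,1}$, which is~(2). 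The remaining extension across $v_0$ and through $\Gamma_\calA$ to the whole of $\Gamma$ is then unobstructed by Remark~\ref{re:extend}(b), since the $\Gamma_\calA$-side of $v_0$ carries original $F$-arrowheads.

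The main obstacle is controlling the sign of $j_{k-1}$ at each inductive step: if the non-negative $W$-contribution $\sum(i_a-1)\ell_{ea}$ outweighs the (negative) canonical contribution $\sum(2-\delta_v)\ell_{ev}$ in~\eqref{eq:totali}, the argument breaks down. The resolution exploits the mod $N_m$ freedom from Lemma~\ref{le:pole}: I would replace the decorations of $W_m$ by their smallest positive representatives in the prescribed residue class compatible with the pole at $s_0$, and verify that this keeps $j_{k-1}<0$ at every subsequent splice. The ``infinitely many effective'' clause then follows by further shifts by multiples of the relevant $N_k$ in the unobstructed branches, as in Theorem~\ref{th:ext}.
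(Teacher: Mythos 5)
Your high-level strategy---mirror Proposition~\ref{prop:extend2} with the semigroup condition replacing the plane-curve edge-determinant inequalities via Proposition~\ref{prop:div}---is the natural first attempt, and you correctly flag the main obstruction. But the proposed resolution has a genuine gap. The combinatorial claim inside the proof of Proposition~\ref{prop:div} only yields $-I\notin\mathcal{S}_e$ (hence $d_{k-1,1}\nmid i_{k-1,1}$) under the hypothesis $I<0$; for $W\neq 0$ the induced decoration $i_{k-1,1}$ need not be negative, and choosing ``smallest positive representatives'' of the $W_m$-decorations modulo $N_m$ does not force it to be. Indeed, the divisibilities $d_{k,\ell}\mid i_{k,\ell}$ at every intermediate node are already determined by the choice at $v_m$ via the analogue of~(\ref{eq:h}), so you lack independent local freedom at each $v_k$ to keep each $j_{k-1}$ below zero. (And when the paper arranges effectivity, it takes $\nu_m$ \emph{large}, the opposite of what your sign argument would want.) Your induction, as written, cannot close.

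The paper's proof avoids sign control entirely. It classifies the nodes of $W_k$ as \emph{flexible} (at least two indices with $d_{k,\ell}\nmid i_{k,\ell}$) or \emph{rigid} ($i_{k,\ell}=d_{k,\ell}$ for $\ell\ge 2$), introduces a class of \emph{strict} allowed divisors together with a one-parameter family of modifications concentrated at the closest flexible node $v_{k'}$, and then proves the inductive step in two parts: (a) a strict $W_{k+1}$ extends to a strict $W_k$, by a parity argument over a few modifications; (b) some modification of a strict $W_k$ satisfies $d_{k-1,1}\nmid i_{k-1,1}$. Part (b) is the heart of the argument and is by contradiction: assume $d_{k-1,1}\mid i_{k-1,1}$ for \emph{every} modification; using the semigroup membership of $d_{k-1,1}$ and the intermediate $d_{\bullet,1}$'s, repeatedly strip off $\gcd$'s to obtain a divisor $\bar d_{k-1,1}\in\mathcal{S}\langle D_{k'}/d_{k',1},\dots,D_{k'}/d_{k',n_{k'}}\rangle$ dividing both $D_{k'}$ and $i_{k'-1,1}$; then Lemma~\ref{le:elem}(b) forces $d_{k',\ell}\mid i_{k',\ell}$ for all but one $\ell$, contradicting the flexibility of $v_{k'}$, which was engineered into $W_m$ from the very start. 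Without this flexibility/rigidity/strictness machinery the inductive step is not justified.
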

Note that the above additional non--divisibility property (2) is the key
assumption in  the  Addendum (\ref{eq:addendum}).

\begin{proof} We proceed in several steps. During the proof $\lambda $ is fixed.

\bekezdes\labelpar{A}
 We fix notations for the wanted $W_m$ and its extensions on the spliced
star--shaped subdiagrams of $\Gamma_1$.
Moreover, we  also consider the decoration  $i_{0,1}$ which is part of a potential extension to
$\Gamma_0$, but it is completely determined by the extension on $\Gamma_1$.

\begin{picture}(450,110)(0,-25)
\put(70,40){\circle*{4}} \put(100,40){\circle*{4}}
\put(70,40){\line(1,0){30}}
\put(70,40){\line(-2,1){30}}\put(70,40){\line(-2,-1){30}}
\put(70,40){\line(-1,-2){15}}
\put(70,40){\line(1,-2){15}}
\put(70,10){\makebox(0,0){$\ldots$}}
\put(50,43){\makebox(0,0){$\vdots$}}
\dashline[3]{3}(100,40)(115,40)\put(115,40){\vector(1,0){5}}
\put(123,50){\makebox(0,0){\tiny{$i_{0,1}-1$}}}
\put(55,10){\circle*{4}}
\put(85,10){\circle*{4}}
\put(82,45){\makebox(0,0){\tiny{$d_{0,1}$}}}
\put(58,26){\makebox(0,0){\tiny{$d_{0,n_0}$}}}
\put(85,31){\makebox(0,0){\tiny{$d_{0,2}$}}}
\put(70,65){\makebox(0,0){$v_0$}}

\put(225,40){\circle*{4}}\put(195,40){\circle*{4}}
\put(255,40){\circle*{4}}
 \put(210,10){\circle*{4}}
 \put(240,10){\circle*{4}}
\put(225,40){\line(1,-2){15}}
\put(225,40){\line(-1,-2){15}}
\put(238,45){\makebox(0,0){\tiny{$d_{1,1}$}}}
\put(210,31){\makebox(0,0){\tiny{$d_{1,n_1}$}}}
\put(239,31){\makebox(0,0){\tiny{$d_{1,2}$}}}
\put(217,46){\makebox(0,0){\tiny{$d_1$}}}
\put(225,65){\makebox(0,0){$v_1$}}
\put(225,10){\makebox(0,0){$\ldots$}}
\put(255,40){\vector(-1,0){80}}
\dashline[3]{3}(195,42)(180,42)\put(180,42){\vector(-1,0){5}}
\dashline[3]{3}(210,10)(210,-5)\put(210,-5){\vector(0,-1){5}}
\dashline[3]{3}(255,40)(270,40)\put(270,40){\vector(1,0){5}}
\dashline[3]{3}(240,10)(240,-5)\put(240,-5){\vector(0,-1){5}}
\put(175,50){\makebox(0,0){\tiny{$i_1-1$}}}
\put(275,50){\makebox(0,0){\tiny{$i_{1,1}-1$}}}
\put(205,-15){\makebox(0,0){\tiny{$i_{1,n_1}-1$}}}\put(244,-15){\makebox(0,0){\tiny{$i_{1,2}-1$}}}

%\put(300,40){\circle*{4}}\put(270,40){\circle*{4}}
%\put(330,40){\circle*{4}}
% \put(285,10){\circle*{4}}
% \put(315,10){\circle*{4}}
%\put(300,40){\line(1,-2){15}} \put(300,40){\line(-1,-2){15}}
%\put(310,45){\makebox(0,0){\tiny{$d_{2,1}$}}}
%\put(285,31){\makebox(0,0){\tiny{$d_{2,n_2}$}}}
%\put(314,31){\makebox(0,0){\tiny{$d_{2,2}$}}}
%\put(292,46){\makebox(0,0){\tiny{$d_2$}}}
%\put(300,65){\makebox(0,0){$v_2$}}
%\put(300,10){\makebox(0,0){$\ldots$}}
%\put(330,40){\vector(-1,0){80}}
%\dashline[3]{3}(270,42)(255,42)\put(255,42){\vector(-1,0){5}}
%\dashline[3]{3}(285,10)(285,-5)\put(285,-5){\vector(0,-1){5}}
%\dashline[3]{3}(330,40)(345,40)\put(345,40){\vector(1,0){5}}
%\dashline[3]{3}(315,10)(315,-5)\put(315,-5){\vector(0,-1){5}}
%\put(260,50){\makebox(0,0){\tiny{$i_2-1$}}}
%\put(340,50){\makebox(0,0){\tiny{$i_{2,1}-1$}}}
%\put(280,-15){\makebox(0,0){\tiny{$i_{2,n_2}-1$}}}\put(319,-15){\makebox(0,0){\tiny{$i_{2,2}-1$}}}

%\put(415,40){\circle*{4}}\put(385,40){\circle*{4}}
% \put(400,10){\circle*{4}}
% \put(430,10){\circle*{4}}
%\put(415,40){\line(1,-2){15}}
%\put(415,40){\line(-1,-2){15}}
%\put(415,65){\makebox(0,0){$v_3$}}
%\put(415,10){\makebox(0,0){$\ldots$}}
%\put(425,40){\vector(-1,0){60}}
%\dashline[3]{3}(385,42)(370,42)\put(370,42){\vector(-1,0){5}}
%\put(375,50){\makebox(0,0){\tiny{$i_3-1$}}}

\put(330,40){\makebox(0,0){$\ldots$}}
\end{picture}

\noindent
We also set $D_k:=\prod_{\ell=1}^{n_k} d_{k,\ell}$
and $D_k^*:=\prod_{\ell=2}^{n_k} d_{k,\ell}$ ($1\leq k\leq m$). Note that
$n_k\geq 2$. %  by minimality of the diagram.
%It is convenient to denote
%the {\it subgroup} of $\Z$ generated by $g_1,\ldots g_t$ by $\Z\langle g_1,\ldots, g_t\rangle$, while
%the {\it subsemigroup} of $\N$ generated by  $g_1,\ldots g_t$ by $\calS\langle g_1,\ldots, g_t\rangle$.

The semigroup condition for $\Gamma$ implies that for any $k>0$ one has
\begin{equation}\label{eq:*}
d_{k-1,1}\in \calS\Big\langle
\frac{D_k}{d_{k,2}}, \ldots, \frac{D_k}{d_{k,n_k}},
\frac{D_k^*D_{k+1}}{d_{k+1,2}},\ldots ,
 \frac{D_k^*D_{k+1}}{d_{k+1,n_{k+1}}},
\frac{D_k^*D_{k+1}^*D_{k+2}}{d_{k+2,2}}, \cdots
\Big\rangle
\end{equation}
%This implies  several simplified inclusions, the first (whose generators involve only $v_k$) is
%\begin{equation}\label{eq:*b}
%d_{k-1,1}\in \calS\Big\langle
%\frac{D_k}{d_{k,2}}, \ldots, \frac{D_k}{d_{k,n_k}},
%D_k^*
%\Big\rangle=
%\calS\Big\langle
%\frac{D_k}{d_{k,1}}, \frac{D_k}{d_{k,2}}, \ldots, \frac{D_k}{d_{k,n_k}}
%\Big\rangle,
%\end{equation}
%and there is a relation for each `segment' $(v_k, v_{k+1},\ldots, v_{k'})$.

The wanted divisor will be constructed by induction.
 From (\ref{le:pole}) we know that infinitely many $W_m$, even infinitely many effective $W_m$, satisfying (1)
exist (see also  \ref{C}). Here
$W_m$ identifies $\nu_m$ by
\begin{equation}\label{eq:nu2}
\nu_m=i_{m-1,1} d_m+i_mD_m.
\end{equation}
Then, we analyze how an allowed divisor $W_{k+1}$  from
$\Gamma_{k+1}$ can be extended over $\Gamma_k$. Along this
procedure we will use the following identities `around $v_k$'
satisfied by any extension:
\begin{equation}\label{eq:h}
i_{k+1}=-(n_k-1)d_kD_k^*+i_kD_k^*+\sum _{\ell\geq 2}i_{k,\ell}
d_kD^*_k/d_{k,\ell},
\end{equation}
\begin{equation}\label{eq:j1a}
i_{k-1,1}=-(n_k-1)D_k+\sum_{\ell\geq 1} i_{k,\ell} D_k/d_{k,\ell}.\end{equation}
In this procedure we need a deeper understanding of the extensions (compared with
(\ref{ss:extend})), and
we need to consider divisors with some special properties, we will call them `strict'.
The decorations of their nodes satisfy some additional conditions, as it is explained next.

Assume that $W_k$ is an allowed  divisor on $\Gamma_k$ for some
$k\geq 1$. The decorations of $W_k$ will distinguish  the nodes as
follows. For some $k'\in \{k,\ldots, m\}$, the node  $v_{k'} $ is
called {\it flexible} if there are at least two indexes
$\ell\in\{1,\ldots, n_{k'}\}$ for which $d_{k',\ell}\nmid
i_{k',\ell}$. For $k'\in \{k,\ldots, m-1\}$, if $v_{k'}$ is not
flexible, but $d_{k',\ell}=  i_{k',\ell}$  for all
$\ell\in\{2,\ldots, n_{k'}\}$ then  it is called {\it rigid}. Note
that not all non--flexible nodes are rigid (see the cases
discussed in (\ref{re:extend})).

In this proof the nodes of all allowed divisors will be  either flexible or rigid.
\bekezdes\labelpar{C}  First we construct an allowed divisor $W_m$ which satisfies (1) and is flexible at $v_m$.
We search for $i_m,i_{m,1},\ldots, i_{1,n_m}$ such that they satisfy the allowedness at $v_m$,
(\ref{eq:nu2}), (\ref{eq:j1a}) for $k=m$,
and $\exp(-2\pi i\nu_m/N_m)$ is root
of the Alexander polynomial $\Lambda_{\Gamma_m}(t)$. Since $D_m\mid N_m$,
the last condition implies that
\begin{equation}\label{eq:LAM}
d_{m,\ell}\nmid \nu_m\ \ \mbox{for at least two indexes
$\ell\in\{1,\ldots,n_m\}$}.
\end{equation}
We proceed as follows. For any  $\nu_m$ with (\ref{eq:LAM}) we
find  $i_{m-1,1}$ and $i_m$ satisfying (\ref{eq:nu2}). This is
possible since $\gcd(d_m,D_m)=1$. Then we find integers
$\{i_{m,\ell}\}_{\ell=1}^{n_m}$ satisfying
 (\ref{eq:j1a}). This, again, is possible since
$\gcd_\ell(\frac{D_m}{d_{m,\ell}})=1$. Since $d_{m,\ell}\mid \nu_m
\Leftrightarrow d_{m,\ell} \mid i_{m-1,1} \Leftrightarrow
d_{m,\ell}\mid i_{m,\ell}$,
 by (\ref{eq:LAM}) we have that $d_{m,\ell}\nmid i_{m,\ell}$ for at least two indexes, hence
$W_m$ is flexible at $v_m$.

%\bekezdes\labelpar{D}  Consider the situation of (\ref{C}) and the divisor constructed there.
%Then (\ref{B}) shows that it can be replaced by another one which additionally satisfies
%$d_{m-1,1}\nmid i_{m-1,1}$ too.
%In particular,  this
% also proves the main statement for $m=1$. % , as the starting step of the induction.

\bekezdes\labelpar{E}  Next we analyze the possibilities  how one
can extend divisors. Consider an allowed divisor $W_{k+1}$ on
$\Gamma_{k+1}$ ($1\leq k < m$). Note that it also determines
$i_{k,1}$ by (\ref{eq:j1a}). Extending over $v_k$ means that  we
already know everything over $\Gamma_{k+1}$ and $i_{k,1}$, and we
are searching for $i_k$ and $\{i_{k,\ell}\}_{\ell\geq 2}$ which
satisfy the  allowedness condition at $v_k$ and the identity
(\ref{eq:h}).

The divisor $W_{k+1}$ and the decorations of $\Gamma_0(F)$ contain
all the divisibility information, like $d_{k,\ell}$ divides
$i_{k,\ell}$ or not, for any extension $W_k$ on $\Gamma_k$.
Indeed, $i_{k,1}$ and $d_{k,1}$  are determined by  $\Gamma_0$ and
$W_{k+1}$, and the divisibility conditions $d_{k,\ell}\nmid
i_{k,\ell}$ ($\ell\geq 2$) are determined by (\ref{eq:h}), since
$d_{k,\ell}\nmid i_{k,\ell} \Leftrightarrow d_{k,\ell}\nmid
i_{k+1}$. Hence, several crucial divisibility properties of an
extension $W_k$ on $\Gamma_k$ are already decided at the level of
its restriction $W_m$ on $\Gamma_m$. This makes the inductive
construction of $W_k$, staring from $W_m$ `global' and difficult.

In order to guarantee the existence of such an extension $W_k$, we
will use two types of criteria: $W_{k+1}$ satisfies  either
$d_{k,1}\nmid i_{k,1}$ or $D^*_k\mid i_{k+1}$ (see
(\ref{eq:addendum})).

If $D^*_k\mid i_{k+1}$ then it has no flexible extension (but, it
might happen that it has several allowed extensions);  we take
always that unique extension for which $v_k$ will be rigid:
$d_{k,\ell}=i_{k,\ell}$ for $\ell\geq 2$. Moreover,  (\ref{eq:h})
and (\ref{eq:j1a})  read as
\begin{equation}\label{eq:rigidh}
i_{k}=i_{k+1}/D_k^* \ \ \mbox{and } \ \ i_{k-1,1}=i_{k,1}D_k^*.\end{equation}

If  $d_{k,1}\nmid i_{k,1}$, then for  any extension $W_k$  the
node $v_k$ is  either flexible or rigid (and the type is decided
already at the level of $W_{k+1}$);
%If it is flexible, then by (\ref{B}) it can be replaced by $\bar{W}_{k+1}$ which has the additional property
%$d_{k-1,1}\nmid i_{k-1,1}$. Hence in this case the inductive construction can be continued.
%(This for an arbitrary $k$ reads as follows. If  $d_{k,1}\nmid d_{k,1}$
%is satisfied by $W_{k+1}$, then it can be extended to a $W_k$; if $v_k$ for this one is
%flexible, then we can assume via (B)  that  $d_{k-1,1}\nmid d_{k-1,1}$ too, hence the induction runs.)
%For the extension $W_k$ the node
$v_k$  is rigid if and only if additionally $D_k^*\mid i_{k+1}$, the case discussed before.
If $v_k$ is flexible, then the extension is not unique, it can be modified if it is necessary (and we will
do this intensively).

Next, we have to check if the extension has one of the two criteria which guarantee the further extension.
We show that if we `modify $W_k$ at the closest flexible node', it will satisfy the inductive
criteria $d_{k-1,1}\nmid i_{k-1,1}$, provided that the tower of extensions was carefully constructed
from the beginning.   The careful choice of the sequence of flexible/rigid nodes and the family of modifications is described in the next part.

\bekezdes\labelpar{E2}
We define the class of {\it strict} allowed divisors $W_k$ on $\Gamma_k$ inductively as follows.

Assume first that $v_k$ is rigid, but at least one node of $(\Gamma_k,W_k)$
is flexible. Let $k'>k$ be that flexible node for which
$v_k,\ldots, v_{k'-1}$ are all rigid. We modify $W_k$ such that we keep unmodified
the restriction on $W_{k'+1}$ and $i_{k',1}$. We fix some $\ell\geq 2$ such that
$d_{k',\ell} \nmid i_{k',\ell}$.
Then we replace $i_{k',\ell}$ into $i_{k',\ell}+td_{k',\ell}$, $t\in\Z$, but keep all other
$i_{k',\ell}$'s. Moreover, modify $i_{k'}\mapsto i_{k'}-td_{k'}$ too.
Then $i_{k'+1}$ and $i_{k',1}$ will stay fixed.

This is the set of modifications we will refer to,
and for {\it strict divisors} we impose the following properties. First,  we assume that
for all the possible modifications, the value  $i_{k'}$ is multiple of
$D^*_{k'-1}$. Then, all these modifications can be extended by a rigid $v_{k'-1}$ to $\Gamma_{k'-1}$.
Then we run again all the modifications (at $v_{k'}$) and we assume that for all of them
$D^*_{k'-2}\mid i_{k'-1}$. Then, again, all of them can be extended. We continue this, at the very end asking $D^*_{k}\mid i_{k+1}$ for all the modifications. If all these conditions are satisfied for $W_k$
then in all its modifications $\bar{W}_k$ the nodes $v_k,\ldots, v_{k'-1}$ will be rigid, and
we call $W_k$ strict. The strictness guarantees that when we run all the modifications at the level
of $v_{k'}$, all the divisors can be extended to some $W_k$. (Otherwise it might happen that
for some modification and at some vertex  both  $d_{k'',1}\nmid i_{k'',1}$ and
 $D^*_{k''}\mid i_{k''+1}$ fail.)

From (\ref{eq:rigidh}) we get
\begin{equation}\label{eq:rigidhh}
i_{k}=\frac{i_{k'}}{D^*_{k}\cdots D^*_{k'-1}}, \ \ \ \mbox{and } \ \ \
i_{k-1,1}=i_{k'-1,1}D_{k}^*\cdots D_{k'-1}^*.
\end{equation}
Since $i_{k'-1,1}\mapsto i_{k'-1,1}+ t D_{k'}$, the modifications induce
\begin{equation}\label{eq:rigidhhh}
i_{k}\mapsto i_{k}-\frac{td_{k'}}{D^*_{k}\cdots D^*_{k'-1}}, \ \ \ \mbox{and } \ \ \
i_{k-1,1}\mapsto i_{k-1,1}+tD_{k}^*\cdots D_{k'-1}^*D_{k'}.
\end{equation}
%The stricness condition then implies for any $k<k''\leq k'$:
%\begin{equation}\label{eq:STRICT}
%D^*_{k''-1}\,\Big| \,\frac{d_{k'}}{D^*_{k''}\cdots D^*_{k'-1}}
%\end{equation}

If $v_k$ is flexible then $W_k$ is strict by definition. In fact
the above discussion is valid in this case too with $k'=k$. In
particular, the set of modifications is given by
$i_{k,\ell}\mapsto i_{k,\ell}+td_{k,\ell}$ for the chosen $\ell$
and keeping the other $i_{k,\ell}$'s,  $i_{k}\mapsto
i_{k}-td_{k}$, $i_{k-1,1}\mapsto i_{k-1,1}+tD_k$.
%Its set of modifications are exactly those  used in (\ref{B}).

If we run the above modification for the divisors $W_m$
constructed in (\ref{C}), then $\nu_m$ stays stable, hence if the
restriction of some $W_k$ to $\Gamma_m$ satisfies (1), then all
its modifications keep satisfying (1).

In our procedure we  consider only strict allowed divisors. They
will be constructed inductively starting from the strict divisors
$W_m$ constructed in (\ref{C}). The inductive statement we prove
is the following: {\it
 for any $1\leq k\leq m$ there exists a strict allowed divisor $W_k$ on $\Gamma_k$
 satisfying $d_{k-1,1}\nmid i_{k-1,1}$, and (1) on $\Gamma_m$.  }

\vspace{2mm}

The proof of the inductive step breaks into two parts.

\vspace{2mm}

(a) If the above properties are  true for some strict $W_{k+1}$ on $\Gamma_{k+1}$
then definitely it can be extended
to an allowed divisor $W_k$, but this is not necessarily strict.
We prove that by a  good choice of one of its modifications, that divisor  has a strict   extension
(not necessarily satisfying $d_{k-1,1}\nmid i_{k-1,1}$).

(b) If $W_k$ is strict and its restriction  satisfies (1), then it can be replaced (by the above moves) by another strict divisor  which satisfies
both (1) and $d_{k-1,1}\nmid i_{k-1,1}$.

\vspace{2mm}

Note that part (b) provides the main inductive statement for $m=1$ too. Indeed, by (\ref{C}) a
strict divisor $W_m$ with (1) exists, which by  (b)  can be replaced by a wanted one.

\bekezdes\labelpar{F} Here we prove part (a) of the inductive step (\ref{E2}).

Assume that $W_{k+1}$ is a strict divisor on $\Gamma_{k+1}$
satisfying (1) and $d_{k,1}\nmid i_{k,1}$. We consider all the
modifications $\bar{W}_{k+1}$ of $W_{k+1}$ as in (\ref{E2}), and
we distinguish the next two cases.

First, suppose that there is no
 $\bar{W}_{k+1}$ (with or without $d_{k,1}\nmid i_{k,1}$) for which
$D_k^*\nmid i_{k+1}$. Then we  extend $W_{k+1}$ by a
rigid node. The extended divisor $W_k$ will be strict.

Second, we assume that
 there exist some   $\bar{W}_{k+1}$ with  $D_k^*\nmid i_{k+1}$.
The problem is that it might happen that in the new situation
$d_{k,1}\nmid i_{k,1}$ fails, and the extension is not guaranteed.
We claim  that the two conditions $d_{k,1}\nmid i_{k,1}$
and $D_k^*\nmid i_{k+1}$ can be obtained simultaneously by some $\bar{W}_{k+1}$.
Then we extend this new $\bar{W}_{k+1}$
to get a strict $W_k$ with flexible $v_k$.

Let us prove now the above claim.

Recall that $d_{k,1}\nmid i_{k,1}$. If $D^*_k\nmid i_{k+1}$ for $W_{k+1}$ then we are done.
Similarly, if $d_{k,1}\nmid i_{k,1}$ for $\bar{W}_{k+1}$ then again we are done. Otherwise,
by (\ref{eq:rigidhhh}) we must have
\begin{equation}\label{eq:DIV}
d_{k,1}\nmid \delta:=D_{k+1}^*\cdots D_{k'-1}^*D_{k'} \ \ \ \mbox{and } \ \ \
D^*_k\nmid \Delta:=\frac{d_{k'}}{D^*_{k+1}\cdots D^*_{k'-1}}.
\end{equation}
We consider the modifications for $t=1,2,3$. Then either we get a wanted pair
or we will have simultaneously
\begin{equation*}
\left\{\begin{array}{l} d_{k,1}\nmid i_{k,1} \\ D^*_k\mid i_{k+1}\\
\end{array}\right. \hspace{5mm}
\left\{\begin{array}{l} d_{k,1}\mid i_{k,1}-\delta \\ D^*_k\nmid i_{k+1}+\Delta\\
\end{array}\right. \hspace{5mm}
\left\{\begin{array}{l} d_{k,1}\nmid i_{k,1}-2\delta \\ D^*_k\mid i_{k+1}+2\Delta\\
\end{array}\right. \hspace{5mm}
\left\{\begin{array}{l} d_{k,1}\mid i_{k,1}-3\delta \\ D^*_k\nmid i_{k+1}+3\Delta.\\
\end{array}\right. \hspace{5mm}
\end{equation*}
This implies $d_{k,1}\mid 2\delta$ and $D^*_k\mid 2\Delta$. This together with
(\ref{eq:DIV}) implies that both $d_{k,1}$ and $D^*_k$ should be  even. This
is not possible since $d_{k,1}$ and $D^*_k$ are relative prime.

\bekezdes\labelpar{B} Finally  we prove part (b) of the inductive step (\ref{E2}).

Assume that $W_k$ is a strict  allowed divisor on $\Gamma_k$ such that its restriction
 satisfies (1). If $v_k$ is rigid we will use all the notations of (\ref{E2}), where $v_{k'}$ is the closest
 flexible node to $v_k$. In fact, these notations can also  be used when $v_k$ is flexible, with the
 convention $k'=k$.

We have to show that for some modification of $W_k$ one has $d_{k-1,1}\nmid i_{k-1,1}$.
We assume that this is not the case, that is, for all modifications of $W_k$ at $v_{k'}$ one has
\begin{equation}\label{eq:STRICT2}
d_{k-1,1}\mid i_{k-1,1}=
(i_{k'-1,1}+tD_{k'})\cdot D^*_{k}\cdots D^*_{k'-1},
%\ \ \ \mbox{and} \ \ \
%d_{k-1,1}\mid D^*_{k}\cdots D^*_{k'-1} D_{k'}.
\end{equation}
%$d_{k-1,1}\mid i_{k-1,1}=i_{k'-1,1}D^*_{k}\cdots D^*_{k'-1} $
 and we wish to get a contradiction.
%If  for all the modifications, then again by
%(\ref{eq:rigidhh})--(\ref{eq:rigidhhh}) we get

For $\ell\in \{2,\ldots, n_k\}$ set $a_\ell:=\gcd(d_{k-1,1},
d_{k,\ell})$, $A^*:=\prod_{\ell \geq 2}a_l$ and $d'_{k-1,
1}:=d_{k-1,1}/A^*$. About $d'_{k-1, 1}$ we wish to prove two
facts. First, clearly
\begin{equation}\label{eq:DBAR}
d'_{k-1,1}\mid (i_{k'-1,1}+tD_{k'})\cdot D^*_{k+1}\cdots D^*_{k'-1}.
\end{equation}
The second one is
\begin{equation}\label{eq:*b}
d'_{k-1,1}\in \calS\Big\langle
\frac{D_{k+1}}{d_{k+1,2}},\ldots ,
 \frac{D_{k+1}}{d_{k+1,n_{k+1}}},
\frac{D_{k+1}^*D_{k+2}}{d_{k+2,2}}, \ldots,\frac{D_{k+1}^*D_{k+2}}{d_{k+2,n_{k+2}}}, \cdots
\Big\rangle.
\end{equation}
The semigroup involved above is the semigroup associated with that diagram which is obtained from
$\Gamma_0$ by deleting the star--shaped subdiagram around $v_k$. (In fact, (\ref{eq:DBAR}) can also
be interpreted in this way.)
The proof of (\ref{eq:*b}) runs as follows.  Write
$$d_{k-1,1}=\sum_{\ell\geq 2} m_\ell \frac{D_k}{d_{k,\ell}} +D^*_k\cdot \sum_{\ell\geq 2}n_\ell
\frac{D_{k+1}}{d_{k+1,\ell}}+D^*_kD^*_{k+1}\cdot \sum_{\ell\geq
2}n'_\ell \frac{D_{k+2}}{d_{k+2,\ell}}+\cdots.$$ Then $a_\ell\mid
m_\ell$, hence $A^*\mid m_\ell D^*_k/d_{k,\ell} $ too, for any
$\ell\geq 2$. In particular, $d'_{k-1,1}$ belongs to
$$\calS\langle d_{k,1},
D_{k+1}/d_{k+2,1},\ldots,D_{k+1}/d_{k+1,n_{k+1}},
D^*_{k+1}D_{k+2}/d_{k+2,1},\ldots,D^*_{k+1}D_{k+2}/d_{k+2,n_{k+2}},
\cdots \rangle.$$
 But $d_{k,1}$ is in the semigroup generated by the others, cf. (\ref{eq:*}), thus we get (\ref{eq:*b}).

The step how we get $d'_{k-1,1}$ from $d_{k-1,1}$ can be
continued. In the second step we set
$d''_{k-1,1}:=d'_{k-1,1}/\gcd(d'_{k-1,1}, D^*_{k+1})$. Dividing
consecutively by the corresponding divisor of $D^*_k, \ldots,
D^*_{k'-1}$, from $d_{k-1,1}$ we get  $\bar{d}_{k-1,1}$ with the
following properties:
\begin{equation}\label{eq:DBAR2}
\bar{d}_{k-1,1}\mid i_{k'-1,1}+tD_{k'}, \ \ \mbox{or,
equivalently, $\bar{d}_{k-1,1}$ divides both $i_{k'-1,1}$ and
$D_{k'}$},
\end{equation}
\begin{equation}\label{eq:DBAR3}
\bar{d}_{k-1,1}\in \calS\Big\langle
\frac{D_{k'}}{d_{k',1}},\ldots ,
 \frac{D_{k'}}{d_{k',n_{k'}}}
\Big\rangle.
\end{equation}

Now (\ref{eq:DBAR3}) together with $\bar{d}_{k-1,1}\mid D_{k'}$,
 via Lemma~\ref{le:elem}(b),
implies that for some $\ell_0\in \{1,\ldots, n_{k'}\}$ one has
$D_{k'}/d_{k',\ell_0}\mid \bar{d}_{k'-1,1}$.  This with
$\bar{d}_{k-1,1}\mid i_{k'-1,1}$ implies that
$D_{k'}/d_{k',\ell_0}\mid i_{k'-1,1}$. Then the formula
(\ref{eq:j1a}) for $i_{k'-1,1}$ implies that $d_{k',\ell}\mid
i_{k',\ell}$ for $\ell\in\{1,\ldots,n_{k'}\}\setminus \{\ell_0\}$,
which
 contradicts the fact that $v_{k'}$ is flexible.

\bekezdes\labelpar{I} Now we verify  that the above construction provides infinitely many
divisors $W_k$ at each step $k$. Indeed, when $k=m$ then  in (\ref{C})
 there are infinitely many possibilities for $i_{m,1}$ to
realize  a desired $W_m$. Furthermore, in the extension procedure,  this
initially chosen $W_m$ is modified, but the original $i_{m,1}$ is never touched.

Moreover, we can obtain this way infinitely many (extended) \emph{effective} divisors  $W_m$. Indeed, in (\ref{C}) we can choose the value $\nu_m$ freely modulo $N_m$, in particular positive and large enough with respect to all decorations along the edges of $\Gamma$. Then also $i_{m-1,1}$ and further $\{i_{m,\ell}\}_{\ell=1}^{n_m}$ can be chosen \lq large\rq. In fact, if $\nu_m$ is large enough, then all constructed (and modified) multiplicities along dashed arrows, on $\Gamma_1$ and further on the whole of $\Gamma$,  will be positive. This ends the proof of Proposition  \ref{prop:TECH}.
% So at the
%end we will be sure to have obtained infinitely many $W_m$
%satisfying the statement.
\end{proof}

\begin{theorem} \labelpar{thm:allrealized}
Let $(X,0)$ be an IHS germ and $f$ an analytic function on $X$,
such that the minimal embedded resolution diagram
$\Gamma_\pi(X,f)$ satisfies the semigroup condition. Let $\lambda$
be a monodromy eigenvalue of $f$ at a point of $\{f=0\}$. Then
there exist infinitely many (effective) allowed $P$-divisors $W$ for
$(X,\div(f))$, and for each of them a pole $s_0$ of the
topological zeta function $Z(f,W;s)$ such that $\exp(2\pi i
s_0)=\lambda$.
\end{theorem}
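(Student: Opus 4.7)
The plan is to mirror the structure of the plane-curve Theorem \ref{th:ext}, using Proposition \ref{prop:TECH} as the singular-setting substitute for Proposition \ref{prop:extend2}. Fix an embedded resolution $\pi$ and the diagram $\Gamma=\Gamma_\pi(X,f)$, and split the argument according to the origin of $\lambda$ as a monodromy eigenvalue.

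For the first case, assume $\lambda=\exp(2\pi i(-u/N_a))$ arises at a point $b\in\{f=0\}$, $b\ne 0$, as an $N_a$-th root of unity attached to some arrowhead $a\in\calA_F$. Let $v$ be the node supporting $a$ after repeated splicing, consider the spliced star-shaped subdiagram $S_v=\Gamma(F_v)$, and choose an allowed $W_v$ with $i_a\equiv u\pmod{N_a}$ and $i_a/N_a\neq i_{a'}/N_{a'}$ for the other arrowheads $a'$ on $S_v$; there are infinitely many such choices, and infinitely many positive ones. Because $v\in\Gamma_\calA$, Remark \ref{re:extend}(b) lets us extend $W_v$ unconditionally to an allowed (effective) $W$ on all of $\Gamma$ without involving $i_a$, so the contribution $d/((\nu_v+sN_v)(i_a+sN_a))$ is untouched; a harmless integer shift of $i_a$ by a multiple of $N_a$ avoids any coincidence $i_a/N_a=\nu_w/N_w$ for other vertices, and hence guarantees that $-i_a/N_a$ is a pole of $Z(f,W;s)$.

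For the second case, $\lambda$ is a root of $\Lambda_\Gamma(t)$. By Proposition \ref{prop:alex} there is a node $w$ whose spliced star-shaped subdiagram $S_w$ has $\lambda$ as a root of $\Lambda_{S_w}(t)$. If $w\in\Gamma_\calA$, apply Lemma \ref{le:pole} to produce infinitely many (effective) allowed $W_w$ on $S_w$ together with a pole $s_0$ of $Z(S_w)$ realizing $\lambda$; by Remark \ref{re:extend}(b) each such $W_w$ extends unconditionally over $\Gamma_\calA$, and for each $j\in\calJ$ the further extension into the branch $\Gamma_j$ is provided by Proposition \ref{prop:TECH} (applied with $v_0$ the attaching vertex $v_j$ and $v_m$ irrelevant, i.e.\ using only the \lq strict\rq\ extension machinery). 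If $w\notin\Gamma_\calA$, take $v_0$ to be the unique vertex of $\Gamma_\calA$ closest to $w$ and $v_m=w$, and apply Proposition \ref{prop:TECH} directly: it delivers infinitely many (effective) allowed $W_m=W_w$ with $s_0=-\nu_w/N_w$ a pole of $Z(F_w,W_w;s)$ satisfying $\exp(2\pi is_0)=\lambda$, and with an allowed extension to $\Gamma_1$ satisfying $d_{0,1}\nmid i_{0,1}$; the additional non-divisibility condition triggers the Addendum \eqref{eq:addendum} and Remark \ref{re:extend}(b), so this extension propagates further to an allowed effective $W$ on the whole of $\Gamma$.

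The hard part, exactly as in the proof of Theorem \ref{th:ext}, will be that $s_0$ may fail to be a pole of the \emph{global} $Z(f,W;s)$ because of residue cancellations: if the set $\calN'=\{w'\in\calN:\nu_{w'}/N_{w'}=-s_0\}$ has more than one element and all contributions $\calR_{w'}$ are nonzero but sum to zero, $s_0$ disappears. I expect to handle this by the same residue-perturbation trick as in step (3) of Theorem \ref{th:ext}: pick an \lq extreme\rq\ node $v\in\calN'$, split the corresponding spliced star $S_v$ along the edge $e$ pointing towards the remaining nodes of $\calN'$, and modify the leg-decorations $\{i_\ell\}$ attached on the opposite side of $e$ while keeping the combination $k=\sum_\ell (D/d_\ell)i_\ell-(n-1)D$ (and hence $\nu_v$) fixed, so that the local $\calR_v$ is perturbed without changing $\nu_v/N_v$ or any data on the $e$-side. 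For $n\geq 3$ one perturbs three indices $(i_1,i_2,i_3)\mapsto(i_1+xd_1,i_2+yd_2,i_3-(x+y)d_3)$; for $n=2$ one shows that the only obstruction forces $\nu_v/N_v=(i-d)/N$, which would make the original $\calR_v$ vanish, a contradiction. The crucial subtlety in the IHS setting is that the modified extension must remain allowed and must still extend to the whole $\Gamma$; here Proposition \ref{prop:TECH}, together with the non-divisibility $d_{0,1}\nmid i_{0,1}$ that it guarantees, provides the \lq room\rq\ needed (the strict-divisor calculus of its proof shows that perturbations of the form above preserve the strictness/extendability properties used there, and the residue modification never forces all $d_\ell\mid i_\ell$ by Lemma \ref{le:nopole}). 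Iterating this procedure, which must terminate by finiteness of $\Gamma$, produces an allowed $W$ for which $s_0$ is a genuine pole of $Z(f,W;s)$, and effectivity is preserved throughout by the closing assertion of Proposition \ref{prop:TECH}.
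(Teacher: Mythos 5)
Your proposal is correct and is essentially the paper's own proof spelled out in detail: the paper's argument for Theorem \ref{thm:allrealized} is the single line ``the proof of Theorem \ref{th:ext} is still valid here, replacing the use of Proposition \ref{prop:extend2} by Proposition \ref{prop:TECH}'', which is precisely the case analysis, extension mechanism, and residue-perturbation step you have reconstructed. (One minor over-complication: when $w\in\Gamma_\calA$ you need not invoke Proposition \ref{prop:TECH} to push the divisor into the branches $\Gamma_j$, since Proposition \ref{prop:extend} and Remark \ref{re:extend}(b) already yield that extension unconditionally — all of $\calA_F$ lies on the $\Gamma_R$-side of every such splice.)
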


\begin{proof} The proof of Theorem \ref{th:ext} is still valid here,
replacing the use of Proposition \ref{prop:extend2} by Proposition
\ref{prop:TECH}.
\end{proof}

\begin{example}\labelpar{ex:EXAMPLE7} We provide an example where
the semigroup condition is not satisfied, and where a given
eigenvalue {\it cannot} be induced by a pole of a zeta function
associated to {\em any} allowed divisor.

We re-consider Example \ref{ex:EXAMPLE}, but taking $F$ as the
divisor corresponding to the unique arrowhead with multiplicity
$N$ (instead of multiplicity 1).

\begin{picture}(400,70)(0,-30)

\put(100,25){\circle*{4}} \put(150,25){\circle*{4}}
\put(200,25){\circle*{4}} \put(250,25){\circle*{4}}
\put(300,25){\circle*{4}} \put(150,5){\circle*{4}}
\put(250,5){\circle*{4}} \put(100,25){\line(1,0){200}}
\put(150,25){\line(0,-1){20}} \put(200,25){\vector(0,-1){20}}
%\put(370,5){\circle*{4}}
\put(250,25){\line(0,-1){20}}
\put(145,30){\makebox(0,0){\tiny{$2$}}}
\put(195,30){\makebox(0,0){\tiny{$1$}}}\put(155,30){\makebox(0,0){\tiny{$7$}}}
\put(245,30){\makebox(0,0){\tiny{$7$}}}\put(205,30){\makebox(0,0){\tiny{$1$}}}
%\put(375,35){\makebox(0,0){\tiny{$-13$}}}
%\put(375,-3){\makebox(0,0){\tiny{$(N)$}}}
\put(256,30){\makebox(0,0){\tiny{$2$}}}
%\put(425,35){\makebox(0,0){\tiny{$-2$}}}
\put(155,20){\makebox(0,0){\tiny{$3$}}}
\put(255,20){\makebox(0,0){\tiny{$3$}}}%\put(322,5){\makebox(0,0){$-7$}}

\put(100,18){\makebox(0,0){\tiny{$(3N)$}}}
\put(141,18){\makebox(0,0){\tiny{$(6N)$}}}
\put(207,18){\makebox(0,0){\tiny{$(N)$}}}
\put(241,18){\makebox(0,0){\tiny{$(6N)$}}}
\put(300,18){\makebox(0,0){\tiny{$(3N)$}}}
\put(140,2){\makebox(0,0){\tiny{$(2N)$}}}
\put(200,-2){\makebox(0,0){\tiny{$(N)$}}}
\put(240,2){\makebox(0,0){\tiny{$(2N)$}}}
\put(55,25){\makebox(0,0){\tiny{$i_1-1$}}}
\put(345,25){\makebox(0,0){\tiny{$i'_1-1$}}}
\put(150,-25){\makebox(0,0){\tiny{$i_2-1$}}}
\put(250,-25){\makebox(0,0){\tiny{$i'_2-1$}}}

\dashline[3]{3}(100,25)(75,25)\put(75,25){\vector(-1,0){5}}
\dashline[3]{3}(300,25)(325,25)\put(325,25){\vector(1,0){5}}
\dashline[3]{3}(150,5)(150,-15)\put(150,-15){\vector(0,-1){5}}
\dashline[3]{3}(250,5)(250,-15)\put(250,-15){\vector(0,-1){5}}
\end{picture}

\noindent Now we have that the Alexander polynomial is
$\Lambda(t)=\Delta_1(t)=(t^{2N}-t^N+1)^2$. We take more
specifically $N=7$, then $N_1=42$, and we pick $\lambda :=
\exp(2\pi i(-5/42)) \in Eig$.

Recall that, cf. Example \ref{ex:EXAMPLE5}, $\nu_1= -78 +7I+6I'$,
where $I=3i_1+2i_2$ and $I'=3i'_1+2i'_2$. We search for an allowed
$W$ such that $-\nu_1/42$ is a pole of $Z(F,W;s)$ and $\nu_1
\equiv 5 $ (mod 42). This last condition is equivalent to $I
\equiv 5$ (mod 6) and $I' \equiv 1$ (mod 7). But for $W$ to be
allowed we need (cf. Example \ref{ex:EXAMPLE4}) that $I=7$ or
$I'=7$. So such an allowed divisor $W$ does not exist. Note that
also the node $v'_1$ cannot induce $\lambda$ by the symmetric
argument, and that the node $v_0$ cannot induce primitive 42-th
roots of unity.
\end{example}

\smallskip
\begin{example}\label{ex:unimod} \

(a) We recall Example (3.5) of B. Rodrigues \cite{Ro}.
Consider the following resolution graph (the right graph  below):

\begin{picture}(400,55)(100,-5)

\put(125,25){\circle*{4}} \put(150,25){\circle*{4}}
%\put(175,25){\circle*{4}}
 \put(200,25){\circle*{4}}
\put(225,25){\circle*{4}} \put(150,5){\circle*{4}}
\put(200,5){\circle*{4}} \put(125,25){\line(1,0){100}}
\put(150,25){\line(0,-1){20}} %\put(175,25){\vector(0,-1){20}}
%\put(370,5){\circle*{4}}
\put(200,25){\line(0,-1){20}} \put(125,35){\makebox(0,0){\tiny{$-2$}}}
\put(150,35){\makebox(0,0){\tiny{$-1$}}}
%\put(175,35){\makebox(0,0){\tiny{$-13$}}}%\put(175,-3){\makebox(0,0){\tiny{$(N)$}}}
\put(200,35){\makebox(0,0){\tiny{$-6$}}}
\put(225,35){\makebox(0,0){\tiny{$-2$}}} \put(160,5){\makebox(0,0){\tiny{$-4$}}}
\put(210,5){\makebox(0,0){\tiny{$-3$}}}%\put(322,5){\makebox(0,0){$-7$}}

\put(325,25){\circle*{4}} \put(350,25){\circle*{4}}
%\put(375,25){\circle*{4}}
 \put(400,25){\circle*{4}}
\put(425,25){\circle*{4}} \put(350,5){\circle*{4}}
\put(400,5){\circle*{4}} \put(325,25){\line(1,0){100}}
\put(350,25){\line(0,-1){20}}
%\put(370,5){\circle*{4}}
\put(400,25){\line(0,-1){20}}
% \put(440,20){\makebox(0,0){\tiny{$\vdots$}}}

\put(325,35){\makebox(0,0){\tiny{$(6)$}}}\put(350,35){\makebox(0,0){\tiny{$(12)$}}}
\put(400,35){\makebox(0,0){\tiny{$(3)$}}}\put(425,35){\makebox(0,0){\tiny{$(5)$}}}
%\put(375,35){\makebox(0,0){\tiny{$-13$}}}
%\put(375,-3){\makebox(0,0){\tiny{$(N)$}}}
\put(359,5){\makebox(0,0){\tiny{$(3)$}}}
%\put(483,15){\makebox(0,0){\tiny{$7$ arrows}}}
\put(409,5){\makebox(0,0){\tiny{$(1)$}}}
\put(460,25){\makebox(0,0){\tiny{$(7)$}}}
%\put(460,5){\makebox(0,0){\tiny{$(1)$}}}
\put(425,25){\vector(1,0){25}}
%\put(425,25){\vector(1,-1){20}}

\end{picture}

\vspace{3mm}

It is easy to verify that it is a numerically Gorenstein elliptic graph with length of the
elliptic sequence two (for terminology, see e.g. \cite{Nweakly}). It was not mentioned in \cite{Ro}, but this graph can be realized by a
hypersurface isolated singularity with multiplicity $3$ and geometric genus $2$ (see also \cite{Y},
case (24) in Table 4). We consider the nowhere vanishing form $\omega_0$ on $X\setminus\{0\}$. The computation in \cite{Ro} shows that, for the indicated Weil divisor $F$, the zeta function
$Z(F,\omega_0;s)$ has $s_0=1/3 $ as a simple pole, but that $\exp(2\pi i/3)$ is not a root of the involved Alexander polynomial.

\smallskip
Note that this example {\em does} satisfy the (analogue of) the
semigroup condition.
Let us explain what we mean by this. Even if a graph is not unimodular,
one can associate with it a splice diagram (by the very similar way
as in (\ref{ss:1})),   and one can impose the semigroup condition in the same way
as above read from the splice diagram, cf. \cite{NW1}.
For example, the `culpable' node with decoration $-6$ (which provides the counterexample
to the `naively generalized' Monodromy Conjecture)  satisfies the semigroup condition,
since the determinant of the $(-2,-1,-4)$ string is 2 which is included in the
semigroup generated by 2 and 4.
Nevertheless, for this graph  the combination of Goal (1) and (2) fails.
The point is that this graph is not unimodular, hence our
main result does not apply to it.

This also shows that in our discussions the IHS--restriction is essential:
any generalization of our main results to non--IHS germs requires the replacement of
the semigroup condition by a much stronger assumption.

\smallskip
(b) One can ask if there is any unimodular graph providing a
 counterexample to the `naively  generalized' Monodromy Conjecture.
Here is one, again, in a combinatorial setting. The form is the standard Gorenstein form,
 whose analytic realization can easily be checked; the function--multiplicities are listed on the
  second diagram, where the  analytic realization of the function is not guaranteed, and
$N$ is a positive integer.
  The example  shows that
in our combinatorial arguments from this section the semigroup assumption cannot be eliminated.

 \begin{picture}(400,55)(100,-5)

\put(125,25){\circle*{4}} \put(150,25){\circle*{4}}
%\put(175,25){\circle*{4}}
 \put(200,25){\circle*{4}}
\put(225,25){\circle*{4}} \put(150,5){\circle*{4}}
\put(200,5){\circle*{4}} \put(125,25){\line(1,0){100}}
\put(150,25){\line(0,-1){20}} %\put(175,25){\vector(0,-1){20}}
%\put(370,5){\circle*{4}}
\put(200,25){\line(0,-1){20}} \put(125,35){\makebox(0,0){\tiny{$-2$}}}
\put(150,35){\makebox(0,0){\tiny{$-1$}}}
%\put(175,35){\makebox(0,0){\tiny{$-13$}}}%\put(175,-3){\makebox(0,0){\tiny{$(N)$}}}
\put(200,35){\makebox(0,0){\tiny{$-7$}}}
\put(225,35){\makebox(0,0){\tiny{$-2$}}} \put(160,5){\makebox(0,0){\tiny{$-3$}}}
\put(210,5){\makebox(0,0){\tiny{$-3$}}}%\put(322,5){\makebox(0,0){$-7$}}

\put(325,25){\circle*{4}} \put(350,25){\circle*{4}}
%\put(375,25){\circle*{4}}
 \put(400,25){\circle*{4}}
\put(425,25){\circle*{4}} \put(350,5){\circle*{4}}
\put(400,5){\circle*{4}} \put(325,25){\line(1,0){100}}
\put(350,25){\line(0,-1){20}}
%\put(370,5){\circle*{4}}
\put(400,25){\line(0,-1){20}} \put(440,20){\makebox(0,0){\tiny{$\vdots$}}}

\put(325,35){\makebox(0,0){\tiny{$(9N)$}}}\put(350,35){\makebox(0,0){\tiny{$(18N)$}}}
\put(400,35){\makebox(0,0){\tiny{$(3N)$}}}\put(425,35){\makebox(0,0){\tiny{$(2N)$}}}
%\put(375,35){\makebox(0,0){\tiny{$-13$}}}
%\put(375,-3){\makebox(0,0){\tiny{$(N)$}}}
\put(362,5){\makebox(0,0){\tiny{$(6N)$}}}
\put(483,15){\makebox(0,0){\tiny{$N$ arrows}}}
\put(411,5){\makebox(0,0){\tiny{$(N)$}}}
\put(460,25){\makebox(0,0){\tiny{$(1)$}}}\put(460,5){\makebox(0,0){\tiny{$(1)$}}}
\put(425,25){\vector(1,0){25}}\put(425,25){\vector(1,-1){20}}
\end{picture}

\vspace{2mm}

Clearly, the semigroup condition at the vertex with decoration $-7$  is not satisfied.
(Indeed, the determinant of the $(-2,-1,-3)$ string is 1, which is not an element of
$\calS\langle 2,3\rangle$.)  By a computation one gets that
$7/3N$ is a pole of $Z(s)$, but $\exp(14\pi i/3N) $ is not a root of
 $$\Delta_1(t)=\frac{(t^{9N}+1)(t^{2N}-1)^{N-1}(t-1)}{(t^{3N}+1)(t^N-1)}.$$
%$\Delta_1(t)=t^{6N}-t^{3N}+1$.
\end{example}

\subsection{Final remarks.} \
(a) {\bf (The definition of allowed forms revisited.)}
 \ There is a crucial feature regarding the
definition of the allowed divisors: it does not use the multiplicity system
of the divisor $F$, only {\it its support}. This has the following positive output:
the family of allowed divisors can be
defined uniformly for {\it all} divisors $F$ with the same
support, and all the results we prove are valid uniformly for all
these divisors $F$ (or, functions $f$ with the same support). To
exemplify, let us rewrite Theorem~\ref{thm:monconj} in the following way.

\begin{theorem}\labelpar{thm:monconj7} Let $(X,0)$ be an IHS germ,
and $F'$ a reduced Weil divisor on $X$. Consider an  allowed
divisor $W$  associated with $(X,F')$. Then, for any function $g$
which has  set--theoretical  vanishing set $g^{-1}(0)=F'$, and any
pole $s_0$ of the topological zeta function $Z(g, W; s)$,
$\exp(2\pi is_0)$ is a monodromy eigenvalue of $g$ at some point
of $\{g=0\}$.
\end{theorem}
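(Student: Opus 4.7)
The plan is to reduce Theorem \ref{thm:monconj7} directly to Theorem \ref{thm:monconj} by observing that the allowedness of $W$ is an intrinsic property of the (decorated) splice diagram that ignores the multiplicities of the ordinary arrowheads.

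First, I would unpack the following key observation: inspecting Definition \ref{def:alloweddiagram}, the condition that $W$ is allowed depends only on (i) the underlying splice diagram $\Gamma$, (ii) the edge decorations $\{d_{ve}\}$, (iii) the positions of the ordinary arrowheads (i.e.\ the support of $F$), and (iv) the multiplicities $\{i_a-1\}$ of the dashed arrowheads. It does \emph{not} involve the multiplicities $\{N_a\}$ of the ordinary arrowheads, nor the derived vertex multiplicities $N_v$ from (\ref{eq:mult}). Similarly, the restriction conditions of (\ref{ss:3})(1)--(2) on the support of $W$ relative to $F$ involve only supports, not multiplicities.

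Consequently, fix an embedded resolution $\pi:\tilde X\to X$ of the reduced set $F'=g^{-1}(0)$ witnessing that $W$ is allowed for $(X,F')$; the diagram $\Gamma_\pi(X,F',W)$ is then allowed in the sense of Definition \ref{def:alloweddiagram}. The same $\pi$ is an embedded resolution of $\mathrm{div}(g)$, since the latter has the same support as $F'$. The two diagrams $\Gamma_\pi(X,F',W)$ and $\Gamma_\pi(X,\mathrm{div}(g),W)$ coincide at the level of splice tree, edge weights, arrowhead positions, and dashed arrowhead multiplicities; they differ only in the ordinary arrowhead multiplicities (all equal to $1$ in the first diagram, equal to the vanishing orders of $g$ along the $F'_a$ in the second) and consequently in the induced vertex multiplicities $N_v$ computed via (\ref{eq:mult}). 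By the previous paragraph, the allowedness conditions are insensitive to this difference, so $\Gamma_\pi(X,\mathrm{div}(g),W)$ is allowed, i.e.\ $W$ is allowed for $(X,g)$ in the sense of Definition \ref{def:allowedform}.

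At this point Theorem \ref{thm:monconj}, applied to the function germ $g$ with its allowed divisor $W$, yields exactly the conclusion: for every pole $s_0$ of $Z(g,W;s)$, the value $\exp(2\pi i s_0)$ is a monodromy eigenvalue of $g$ at some point of $\{g=0\}$. Since every step above is a straightforward unpacking of definitions, there is no genuine obstacle; the only point one should check carefully is that the local restriction of (\ref{ss:3}) on the support of $W$ relative to $F$ is preserved when passing from $F'$ to $\mathrm{div}(g)$, which is immediate because these supports agree.
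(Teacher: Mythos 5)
Your proposal is correct and follows exactly the argument sketched in the paper's final remarks preceding the statement: allowedness (Definition \ref{def:alloweddiagram}) is read off from the splice tree, edge weights, arrowhead positions, and dashed-arrowhead multiplicities, and never invokes the $N_a$ or the derived $N_v$, so the same $W$ remains allowed for $\div(g)$ and Theorem \ref{thm:monconj} applies verbatim. The only minor thing worth making explicit — which you implicitly handle — is that replacing $F'$ (reduced) by $\div(g)$ keeps all ordinary arrowhead multiplicities strictly positive, so condition (\ref{allow.1}) of allowedness is untouched.
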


Note that the zeta--function  $Z(F,W;s)$ and the Alexander
polynomial $\Lambda_{\Gamma(F)}(t)$  do depend essentially on the
multiplicities of $F$.

The above new version (\ref{thm:monconj7}) is definitely a much
stronger statement than the  original (\ref{thm:monconj}). The
interested reader is invited to rewrite all the other results,
especially Theorems \ref{thm:W} and \ref{thm:allrealized} in the
corresponding new versions. Of course, in order to do this, we have to observe that
the  definition of the semigroup condition associated with
$\Gamma_\pi(X,F)$ too {\it depends only on the support of $F$}.

\smallskip

(b) \ {\bf (The restriction (\ref{ss:3})(2) of $W$ revisited.)} \
The restriction (\ref{ss:3})(2) (see also (\ref{bek:INTR3}))
was very convenient in  the computations of arithmetical and
numerical invariants, and additionally created a strong link
between the supports of $F$ and $W$. Moreover, in that choice, we
had in mind the analytic realization of the divisor $W$ too, that is,
the applicability of the main results. More precisely, in general,
it is a rather hard question to determine the analytic realization
of some topologically identified arrowheads/divisors. For example
\cite{NNP} shows that simultaneous realization of some arrowheads
is strongly obstructed.  On the other hand, there is a `natural'
family of analytic singularities for which the analytic
realization of the class of arrowheads considered in
(\ref{ss:3})(2) (arrowheads supported by boundary vertices) is
automatically guaranteed. This is the class of `splice
singularities', cf. \cite{NW1,NWuj}. In is worth to mention that  the
analytic realization of these germs is guaranteed by an
arithmetical property of the graph $\Gamma$ (see End Curve
Theorem  in \cite{NWEC,Ok}), which is nothing else  but the semigroup
condition (\ref{be:semi}).

In this way, the simultaneous appearance of the restriction
(\ref{ss:3})(2) regarding the divisors $W$, and of the semigroup
condition might be natural. Moreover, for a considerably large
class of examples, when the analytic realization of all the forms
$W$ is guaranteed, the semigroup condition too will be satisfied
(compare also with subsection \ref{ss:SC}). This supports strongly
the results of this section, and motivates once again the semigroup
condition, showing that its appearance is not just a technical
necessity (compare also with (\ref{ex:unimod})(b)).

\end{document}